\numberwithin{equation}{section}
\def\XXint#1#2#3{{\setbox0=\hbox{$#1{#2#3}{\int}$ }
\vcenter{\hbox{$#2#3$ }}\kern-.6\wd0}}
\newcommand{\C}{\mathbb C}
\newcommand{\R}{\mathbb R}
\newcommand{\N}{\mathbb N}
\newcommand{\T}{\mathbb T}
\newcommand{\Z}{\mathbb Z}
\newtheorem{theorem}{Theorem}[section]
\newtheorem{definition}[theorem]{Definition}
\newtheorem{lemma}[theorem]{Lemma}
\newtheorem{proposition}[theorem]{Proposition}
\theoremstyle{remark}
\newtheorem{remark}[theorem]{Remark}
\theoremstyle{remark}
\newcommand{\bean}{\begin{eqnarray*}}
\newcommand{\eean}{\end{eqnarray*}}
\newcommand{\ben}{\begin{enumerate}}
\newcommand{\een}{\end{enumerate}}
\newcommand{\beq}{\begin{equation}}
\newcommand{\eeq}{\end{equation}}
\begin{document}

\author{Nicholas Gismondi and Alexandru F. Radu}

\title{Intermittent solutions of the stationary 2D surface quasi-geostrophic equation}

\begin{abstract}
    In this paper we construct non-trivial solutions to the stationary dissipative surface quasi-geostrophic equation on the two dimensional torus which lie strictly below the critical regularity threshold of $\dot{H}^{-1/2}(\T^2)$. Specifically, for any $\alpha < 1/2$ and any dissipation exponent $0 < \gamma \leq 2$ we construct non-trivial solutions such that
    $$
        u,\theta \in \dot{B}^{\alpha-1}_{\infty,\infty}(\T^2) \cap \dot{B}^{\alpha-1}_{2,2}(\T^2).
    $$
    Due to the fact our solutions do not lie in $\dot{H}^{-1/2}(\T^2)$, this requires reinterpreting the notion of a solution. This leads us to formulate the notion of a weak paraproduct solution for the stationary SQG equation. The main new ingredient is the incorporation of intermittency into the construction of the solutions. This allows us to demonstrate non-trivial integrability results for certain fractional derivatives of our solutions. In particular, for highly intermittent solutions, we are able to conclude for every $1 \leq p < 4/3$ we can construct $u$ and $\theta$ lying in $L^p(\T^2)$.
\end{abstract}

\maketitle

\section{Introduction}

\subsection{Motivation and Background}
In this paper we consider the stationary dissipative surface quasi-geostrophic (SQG) equation
\begin{equation}\label{eq:SQG}
    \begin{cases}
        \operatorname{div}(\theta u) + \Lambda^{\gamma}\theta = 0 \\
        u = \Lambda^{-1} \nabla^{\perp} \theta.
    \end{cases}
\end{equation}
for $\theta:\T^2 \to \R$ and $u:\T^2 \to \R^2$ mean-zero and $0 < \gamma \leq 2$. The non-stationary dissipative SQG equation, or just SQG equation for simplicity, is given by
\begin{equation}\label{eq:SQG_not_stat}
    \begin{cases}
        \partial_t \theta + \operatorname{div}(\theta u) + \Lambda^{\gamma}\theta = 0 \\
        u = \Lambda^{-1} \nabla^{\perp} \theta
    \end{cases}
\end{equation}
where now $\theta:[0,T) \times \T^2 \to \R$ and $u:[0,T) \times \T^2 \to \R^2$ and $T > 0$. The \textit{inviscid} SQG equation is simply ~\eqref{eq:SQG_not_stat} with the $\Lambda^{\gamma} \theta$ term omitted. By convention, when $\gamma = 0$ this refers to the inviscid SQG equation. From a geophysical fluid dynamics point of view, the SQG equation is an important model which describes the potential temperature at the surface of a rotating stratified fluid. Examples of such fluids include the surface layer of both the atmosphere and the ocean. See \cite{Salmon} for more discussion on the physical relevance and application of the equation.\\
From a mathematical point of view, the inviscid SQG equation first was proposed as an object of study by Constantin, Majda, and Tabak \cite{CMT}. There are a variety of reasons for this, but perhaps the simplest of these is that $\nabla^{\perp} \theta$ obeys the same evolution equation as the vorticity in the 3D Euler equations. That is, if $\omega$ denotes the vorticity of the velocity $u$ in the 3D Euler equation then it is well known that
$$
    \frac{D\omega}{Dt} = (\omega \cdot \nabla)u
$$
where $D/Dt = \partial_t + u \cdot \nabla$ denotes the material derivative. While, if $\theta$ and $u$ now solve ~\eqref{eq:SQG_not_stat} without dissipation, then
$$
    \frac{D\nabla^{\perp}\theta}{Dt} = (\nabla^{\perp}\theta \cdot \nabla)u.
$$
This suggests that ~\eqref{eq:SQG_not_stat} without dissipation may serve as a model equation for 3D Euler. See \cite{CMT} for more discussion on the analytic and geometric properties of solutions shared by both 2D SQG and 3D Euler.\\
On the topic of non-uniqueness of solutions to ~\eqref{eq:SQG_not_stat}, Buckmaster, Shkoller, and Vicol \cite{BSV} demonstrate that for every $1/2 < \beta < 4/5$, every $0 < \gamma < 2 - \beta$, every $\sigma < \beta/(2-\beta)$, and for every $\mathcal{H}:\mathbb{R} \to \mathbb{R}^+$ smooth and of compact support there exists non-trivial solutions which satisfy $\Lambda^{-1}\theta \in C_t^{\sigma} C_x^\beta$ and
$$
    \int_{\T^2} |\Lambda^{-1}\theta(t,x)|^2\, dx = \mathcal{H}(t).
$$
The proof introduces an auxiliary equation known as the \textit{relaxed SQG momentum equation} (See ~\eqref{eq:relaxed}). The reason for this is the so-called odd multiplier obstruction. To elaborate on this, in the standard convex integration methodology, first introduced by De Lellis and Sz\'{e}kelyhidi in \cite{DLS13} and \cite{DLS09}, one hopes to utilize interactions between high frequency terms which result from the non-linearity to produce low frequency terms which cancel errors. However, if one utilizes this methodology with ~\eqref{eq:SQG_not_stat} (or ~\eqref{eq:SQG}),
then the high frequency terms in essence perfectly cancel, leaving behind the errors. Heuristically, this is due to the odd Fourier multiplier which relates $u$ and $\theta$. There has been much work done on active scalar equations when the relationship between $u$ and $\theta$ is given by a Fourier multiplier which is not odd; see for instance \cite{DW}, \cite{IV}, and \cite{Shvydkoy}. In the setting of the SQG equation though, the relaxed momentum equation considers $u$ and $v$, where $u = \Lambda v$. The Fourier multiplier of $\Lambda$ is $2\pi|\cdot|$ (see Definition \ref{def:laplace}), which of course is even, and thus the odd multiplier obstruction is completely side-stepped.

\noindent
In connection with this work on the regularity threshold required for energy to be conserved, we note the recent resolution of the Onsager conjecture for the inviscid SQG equation. Originally formulated for the 3D Euler equation by Onsager \cite{Onsager}, the conjecture identifies the critical regularity threshold for conservation of energy of weak solutions. Isett and Vicol \cite{IV} first established the rigid side of the conjecture by showing that the energy is conserved for all solutions which satisfy $\theta \in L^3_{t,x}$, while Dai, Giri, and Radu \cite{DGR} and independently Isett and Looi \cite{IL}, proved there exist $\Lambda^{-1} \theta \in C(\R,C^{\alpha}(\T^2))$ for $1/2 \leq \alpha < 1$ which do not necessarily conserve energy using a two step process, first involving Newton iteration which was introduced in \cite{GR}, followed by a convex integration scheme.

\noindent
Finally we mention the work of Cheng, Kwon, and Li \cite{CKL}. Here they consider ~\eqref{eq:SQG}, and prove that for $0 < \gamma < 3/2$ there exist nontrivial solutions in $\dot{H}^{-1/2}$ which satisfy $\Lambda^{-1}\theta \in C^{\alpha}(\T^2)$ for $1/2 \leq \alpha \leq 1/2 + \min(1/6,3/2-\gamma)$. In the proof they introduce the function $f = \Lambda^{-1}\theta$. The relation between $u$ and $f$ is given by $u = \nabla^{\perp}f$, which still has an odd Fourier multiplier, and as expected, the leading order terms from the interactions between their high frequency terms perfectly cancel. Interestingly however, from the highest order surviving term they are able to extract a non-trivial non-oscillatory term to give the desired cancellation of the errors; this stands in stark contrast to the Euler equation. See \cite{CKL} for more details.
\noindent
The aim of our work is to demonstrate the existence of nontrivial weak solutions of ~\eqref{eq:SQG} for any dissipation exponent $0 < \gamma \leq 2$ and belonging to $\dot{B}^{\alpha-1}_{\infty,\infty}(\T^2) \cap \dot{B}^{\alpha-1}_{2,2}(\T^2)$ for some fixed $\alpha <1/2$. It is well established how to define a weak solution to ~\eqref{eq:SQG} when $\theta \in \dot{H}^{-1/2}(\T^2)$; for instance, from \cite{CKL} we say this is a solution if for every $\psi \in C^{\infty}(\T^2)$ we have
\begin{equation}\label{eq:CKL_soln}
    \frac{1}{2} \int_{\T^2} \left(\Lambda^{-1/2}\theta\right) \Lambda^{1/2}\left([R^{\perp},\nabla \psi]\theta\right) = -\int_{\T^2} \left(\Lambda^{-1/2}\theta\right) \Lambda^{\gamma+1/2}\psi
\end{equation}
where $[R^{\perp},\nabla \psi]\theta = -[R_2,\partial_1\psi]\theta + [R_1,\partial_2\psi]\theta$, $R_j$ are the $j^{th}$ Riesz transforms for $j=1,2$, and $[A,B] = AB - BA$ denotes the standard commutator.  Since
$$
    \Vert [R_j,\psi]\theta \Vert_{\dot{H}^{-1/2}} \lesssim \Vert \psi \Vert_{\dot{H}^3} \Vert \theta \Vert_{\dot{H}^{-1/2}},
$$
the integral in ~\eqref{eq:CKL_soln} is well defined. In our situation though, since $\theta \not \in \dot{H}^{-1/2}(\T^2)$, the definition provided by ~\eqref{eq:CKL_soln} breaks down. Instead, we draw inspiration from \cite{ABGN} and use paraproducts to define $\mathbb{P}_{\not=0}(\theta u)$ as an element of $\dot{H}^{-5}(\T^2)$ and then use dual pairings to define the notion of a weak solution. See Definitions \ref{def:paras} and \ref{def:weak_para_soln}. We also note briefly that weak solutions to ~\eqref{eq:SQG_not_stat} for functions $\theta$ which for fixed values of $t$ lie in $\dot{H}^{-1/2}(\T^2)$ can also be defined; see for instance \cite{BSV} and \cite{FM}.

\noindent
The main new contribution of this paper is incorporating intermittency into the construction of the solutions. Intermittency has played a key role in several previous convex integration constructions; see for instance \cite{ABGN}, \cite{BBV}, \cite{BCV}, \cite{BMNV}, \cite{BV19}, \cite{CheskidovLuo}, \cite{CL2}, \cite{DS17}, \cite{Luo}, \cite{MS}, \cite{NV} and references therein. In particular, we make use of a new class of intermittent building blocks which we call \textit{intermittent blobs}. In contrast to standard intermittent building blocks used in previous convex integration constructions like Mikado flows and intermittent jets, the perpendicular gradient of the modulated intermittent blobs are not stationary solutions of any PDE (Euler, SQG, etc.) nor are they anisotropic. Indeed, these intermittent blobs are built from a radially symmetric, compactly supported base function and they incorporate intermittency in two full spatial dimensions.

\noindent
Regarding the first potential pitfall, in standard Nash-style iteration arguments in fluid dynamics, it has always been required that the building blocks be either stationary solutions or approximately stationary solutions of some PDE, usually the Euler equation. The reason for this is in the error estimates, when the derivative may possibly land on a high frequency object, that high frequency object being a stationary solution to a certain PDE is essential to obtain the desired estimates. In our setting, however, the Reynolds stress is measured in $\dot{H}^{-4}(\T^2)$, which ensures that derivatives never need to land on the high-frequency building blocks at all. As a result, this property is unnecessary. This in turn means that one can use a fully intermittent building block at no cost. Moreover, the full intermittency enables $u$ and $\theta$ to converge in spaces that are strictly stronger than what a standard Nash iteration for SQG would permit, even though the Reynolds stress converges in a weaker space. There is a well known result of Chae and Constantin \cite{CC} which asserts that stationary, compactly supported solutions of Euler must satisfy that the mean of $u \otimes u$ is a multiple of the identity. This historically prevented fully intermittent building blocks from being used in convex integration constructions. But since our intermittent building blocks do not solve the Euler equation, this potential issue does not appear for us.

\noindent
The second notable deviation of our intermittent blobs from standard convex integration building blocks is the lack of anisotropy. Because of this, the intermittent blobs carry no intrinsic notion of direction. Ordinarily, such anisotropy is essential, as it encodes the directional structure needed to correct the Reynolds stress. In our construction, however, we compensate for this lack of built-in directionality by slightly weighting one coordinate direction in the velocity increment. This imbalance effectively reinstates the missing geometric information and allows our isotropic intermittent blobs to play the same role typically performed by anisotropic intermittent building blocks.\\
This incorporation of intermittency allows us to show for solutions which are close to the critical regularity threshold that
$$
    \Lambda^{-1/2}u,\Lambda^{-1/2}\theta \in L^p(\T^2)
$$
for all $1 \leq p < 2$. In contrast, for every $1 \leq p < 4/3$, we construct solutions which are highly intermittent and satisfy
$$
    u,\theta \in L^p(\T^2).
$$
Notice, when considering highly intermittent solutions, we have that $u$ and $\theta$ are integrable functions. To our knowledge, this gives the first construction of stationary dissipative SQG solutions which are not merely distributions. In comparison, Gomez-Serrano, Park, Shi, and Yao \cite{GPSY} are able to construct locally integrable solutions to inviscid ~\eqref{eq:SQG}, and existence of solutions to ~\eqref{eq:SQG_not_stat} for initial data lying in $L^p(\R^2)$ for $p \geq 4/3$ was studied by Marchand \cite[Theorem 1.2]{FM}.

\noindent
As expected, there is a necessary tradeoff between regularity and intermittency. One cannot have both a highly regular and highly intermittent solution to stationary SQG. This tradeoff is quantified precisely in ~\eqref{eq:beta_epsilon_est}.
In view of \cite{CKL}, it seems conceivable that for every $\gamma$ one should be able to construct solutions to ~\eqref{eq:SQG} which lie in $\dot{H}^{-1/2}(\T^2)$ and are intermittent, but to demonstrate this will almost certainly require a different approach.

\subsection{Main Result}

As mentioned already, our first task is to extend the definition of a weak solution to ~\eqref{eq:SQG} to $u$ and $\theta$ lying in some Sobolev space with negative regularity which is not contained in $\dot{H}^{-1/2}(\T^2)$. To do this, we need to make sense of the product $\theta u$. In general, there is not much that can be said about this product when both functions lie in a Sobolev space with negative regularity, but following \cite[Definition 1.1]{ABGN} we may offer the following definition of the mean free product:

\begin{definition}[\textbf{Paraproducts in $\dot H^s(\T^2)$}]\label{def:paras}
    Let $f,g$ be distributions, so that $\mathbb{P}_{2^j}(f), \mathbb{P}_{2^{j'}}(g)$ are well-defined for $j,j'\geq 0$ (see Definition \ref{def:projs}).  We say that $\mathbb{P}_{\not = 0}(fg)$ is well-defined as a paraproduct in $\dot{H}^s(\T^2)$ for some $s \in \R$ if
    $$
        \sum_{j,j' \geq 0} \left\Vert \mathbb{P}_{\not = 0}\left(\mathbb{P}_{2^j}(f) \mathbb{P}_{2^{j'}}(g)\right) \right\Vert_{\dot{H}^s} < \infty \, .
    $$
    Then we define
    $$
        \mathbb{P}_{\not = 0}(fg) = \sum_{j,j' \geq 0} \mathbb{P}_{\not = 0} \left(\mathbb{P}_{2^j}(f) \mathbb{P}_{2^{j'}}(g)\right) \, ,
    $$
    since the right-hand side is an absolutely summable series in $\dot H^s(\T^2)$.
\end{definition}
\noindent
With this definition, adapting \cite[Definition 1.2]{ABGN}, we may now define a weak solution to \eqref{eq:SQG} which is valid for $u$ and $\theta$ belonging to Sobolev spaces of arbitrary regularity.

\begin{definition}[\textbf{Weak paraproduct solutions to ~\eqref{eq:SQG}}] \label{def:weak_para_soln}
    If $\theta \in \dot{H}^s$, $s < 0$, and $u = \Lambda^{-1} \nabla^{\perp} \theta \in \dot{H}^s$, we say $u$ and $\theta$ form a weak paraproduct solution to the SQG equation if there is $s' \in \R$ such that $\mathbb{P}_{\not = 0}(\theta u)$ is well defined as a paraproduct in $\dot{H}^{s'}$ in the sense of the previous definition and
    $$
        \left\langle \theta, \Lambda^{\gamma} \phi \right\rangle_{\dot{H}^{s},\dot{H}^{-s}} - \left\langle \mathbb{P}_{\not= 0}(\theta u), \nabla \phi \right\rangle_{\dot{H}^{s'}, \dot{H}^{-s'}} = 0
    $$
    for all smooth $\phi$.
\end{definition}
\noindent
With this, our results are the following:

\begin{theorem}[\textbf{Non-trivial stationary solutions of 2D SQG equation}]\label{thm:main}
    Given any $0 < \gamma \leq 2$ and any $\alpha < 1/2$, we construct
    $$
        u,\theta \in \left(\dot{B}^{\alpha-1}_{\infty,\infty}(\T^2) \cap \dot{B}^{\alpha-1}_{2,2}(\T^2)\right) \setminus \{0\}
    $$
    such that
    \begin{enumerate}
        \item [(a)] $u = \Lambda^{-1} \nabla^{\perp} \theta$;
        \item [(b)] $\mathbb{P}_{\not =0} (\theta u)$ is well defined as a paraproduct in $\dot{H}^{-5}(\T^2)$;
        \item [(c)] $u$ and $\theta$ form a weak paraproduct solution to the stationary SQG equation in the sense of Definition \ref{def:weak_para_soln}.
    \end{enumerate}
    Moreover, when $-1/2 \leq \alpha < 1/2$, then
    $$
        \Lambda^{-1/2}u, \Lambda^{-1/2}\theta \in L^p(\T^2)
    $$
    for all $1 < p \leq 2$. And when $\alpha < -1/2$, for all $0 < \epsilon < 1$, $u$ and $\theta$ can be constructed such that
    $$
    \Lambda^{1/2-\epsilon'}u, \Lambda^{1/2-\epsilon'}\theta \in L^p(\T^2)
    $$
    for all $0 < \epsilon' < 1$ and $1 \leq p < 2$ satisfying
    $$
    p < \frac{2}{2-\epsilon'} \quad \text{and} \quad \epsilon < \frac{p\epsilon' - 2p + 2}{2-p}.
    $$
\end{theorem}

\begin{remark}
    In the scenario when $\alpha < -1/2$ in Theorem \ref{thm:main}, by taking $\epsilon'=1/2$ we see for every $p < 4/3$ and
    $$
    0 < \epsilon < \frac{4-3p}{2(2-p)}
    $$
    one may construct nontrivial $u,\theta \in L^p(\T^2)$. To the authors' knowledge, this is the first example of nontrivial solutions to the stationary dissipative SQG equation where it is proven the solutions are \textit{integrable functions} and not merely \textit{distributions}.
\end{remark}

\subsection{Outline of Paper} In Section \ref{section2} we recall the basics of Littlewood-Paley theory and function space theory as well as the construction of the intermittent blobs. We also prove some technical lemmas which will be useful in the ensuing arguments. In Section \ref{section3} we give a brief overview of the convex integration argument that is to follow and modify Definition \ref{def:weak_para_soln} to allow for weak paraproduct solutions to the relaxed SQG momentum equation. In Section \ref{section4} we state our inductive proposition, Proposition \ref{prop:ind}, and use it to prove Theorem \ref{thm:main}. The first portion of Section \ref{section5} is used to construct the increment $w_{q+1}$, which is then used to build the sequence of Nash iterates. The remaining part of Section \ref{section5} is dedicated to the proof of Proposition \ref{prop:ind}.

\subsection{Acknowledgments} N.G. was supported by the NSF through grant DMS-2400238. A.R. was partially supported by a grant of the Ministry of Research,
Innovation and Digitization, CCCDI - UEFISCDI, project number ROSUA-2024-0001, within PNCDI IV. Our thanks to Ataleshvara Bhargava for many stimulating conversations regarding this project and for suggesting areas of improvement in this paper. We also thank Filip Rossi for his help in preparing the manuscript.

\section{Background Theory and Technical Lemmas}\label{section2}
The following on Littlewood-Paley theory and Fourier multiplier operators can be found in \cite{BCD} and \cite{Grafakos}. The exact formulation of Definition \ref{def:projs} is based off \cite[Definition 2.1]{ABGN} and \cite[Equation 4.9]{BSV}.

\begin{definition}[\textbf{Littlewood-Paley projectors}]\label{def:projs}
    There exists $\varphi \colon \R^{2}\to[0,1]$, smooth, radially symmetric, and compactly supported in $\{6/7 \leq |\xi|\leq 2\}$ such that $\varphi(\xi) = 1$ on $\{1 \leq |\xi| \leq 12/7\}$,
    \begin{equation}
        \sum_{j\geq 0}\varphi(2^{-j}\xi)=1 \hspace{0.25cm} \text{ for all } \hspace{0.25cm} |\xi|\geq 1, \notag
    \end{equation}
    and $\operatorname{supp}\varphi_{j}\cap \operatorname{supp} \varphi_{j'}=\emptyset$ for all $|j-j'|\geq 2$, where $\varphi_j(\cdot) = \varphi(2^{-j}\cdot)$. We define the projection of a function $f$ on its $0$-mode by
    \begin{equation}
        \mathbb{P}_{0}f=\int_{\T^2} f, \notag
    \end{equation}
    and the projection on the $j^{\rm th}$ shell by
    \begin{equation}
        \mathbb{P}_{2^j}(f)(x)=\sum_{k\in \Z^2}\hat{f}(k)\varphi_{j}(k)e^{2\pi ik\cdot x} \, . \notag
    \end{equation}
    We also define $\mathbb{P}_{\neq 0}f:=(\operatorname{Id}-\mathbb{P}_{0})f$, and we also denote by $\hat{K}_{\simeq 1}$ a smooth radially symmetric bump function with support in the ball $\{\xi: |\xi| \leq \frac 18\} $, which also satisfies $\hat{K}_{\simeq 1}(\xi)=1$ on the smaller ball $\{\xi: |\xi| \leq \frac{1}{16}\}$. Then let $\mathbb{P}_{\leq \lambda}$ be the convolution operator that has $\hat{K}_{\simeq 1}\left(\frac{\xi}{\lambda}\right)$ as its Fourier multiplier.
\end{definition}

\begin{definition}[\textbf{$\dot H^s$ Sobolev spaces}] For $s \in \mathbb{R}$, we define
    $$\dot{H}^{s}(\mathbb{T}^{2})=\left\{ f: \sum_{j \in \Z^2 \setminus \{0\}}|j|^{2s}|\hat{f}(j)|^2<\infty\right\}  $$
    with the norm induced by the sum above.
\end{definition}
\begin{remark}
    For every $f\in \dot H^{s}$ for some $s\in \R$, we define the Fourier coefficients
    \[\hat{f}(k)=\int_{\T^{2}}e^{-2\pi ik\cdot x}f(x)dx, \hspace{0.25cm} \text{ where } \hspace{0.25cm} \T^{2}=[0,1]^{2},\]
    and so we can define $\mathbb{P}_{2^j}f$ for $j\geq 0$.  Note that each $\mathbb{P}_{2^j}f$ is smooth if $f\in \dot H^{s}$ irrespective of the value of $s\in \R$.
\end{remark}
\begin{remark}
    For $s > 0$ the following equivalent definition of the $\dot{H}^{-s}$ norm given by
    $$
        \Vert f \Vert_{\dot{H}^{-s}} = \sup_{\Vert \phi \Vert_{\dot{H}^s = 1}} \left|\left\langle f,\phi \right\rangle_{\dot{H}^{-s},\dot{H}^s}\right|
    $$
    is available. With this definition, it is easy to see that the $\dot{H}^{-s}$ norm can be pushed inside integrals, which we perform quite often in Section \ref{section_oscillation_error} without comment.
\end{remark}

\begin{remark}
    Henceforth when the homogeneous Sobolev space in question is clear, we will write $\langle \cdot,\cdot \rangle$ to denote the dual pairing.
\end{remark}

\begin{remark}
    Throughout we will also consider the $\dot{H}^{s}$ norm of matrix valued functions $A:\T^2 \to \R^{2 \times 2}$. By this we mean if $\Vert A \Vert_{op}$ denotes the operator norm of $A$ then
    $$
        \Vert A \Vert_{\dot{H}^{s}}^2 := \sum_{j \in \Z^2 \setminus \{0\}} |j|^{2s} \Vert \hat{A}(j) \Vert_{op}^2.
    $$
\end{remark}
\noindent
We now briefly recall the definitions of the spaces that we will utilize frequently throughout the paper.
\begin{definition}{\textbf{($\alpha$-H\"{o}lder spaces)}}
    For $0 \leq \alpha < 1$ and $f:\T^2 \to \R$, consider the periodic extension to $\R^2$ and define the seminorms
    $$
        [f]_{\alpha} = \sup_{x,y \in \R^2} \frac{|f(x) - f(y)|}{|x-y|^\alpha}
    $$
    and put
    $$
        C^\alpha(\T^2) = \left\{f \in C(\T^2) : [f]_{\alpha} < \infty\right\}.
    $$
    For $\beta \geq 1$, write $\beta = n + \alpha$ where $n \in \N$ and $0 \leq \alpha < 1$ and put
    $$
        \Vert f \Vert_{C^\beta} = \Vert f \Vert_{C^n} + [f]_{\alpha}
    $$
    and we say $f \in C^\beta(\T^2)$ if $\Vert f \Vert_{C^\beta} < \infty$.
\end{definition}

\begin{definition}{\textbf{(Homogeneous Besov spaces)}}
    For $\alpha \in \R$, $0<p,q\leq \infty$, and $f \in \mathcal{D}'(\T^2)$, define the homogeneous Besov space to be
    \[
        \dot{B}^{\alpha}_{p,q} = \left\{ f: \|f\|_{\dot{B}^{\alpha}_{p,q}} = \left\| 2^{j\alpha}\|\mathbb{P}_{2^j}f\|_{L^p(\T^2)}\right\|_{\ell^q_{j\geq 0}} <\infty \right\}.
    \]
\end{definition}
\begin{remark}
    Note for $\alpha > 0$ the space $\dot{B}^{\alpha}_{\infty,\infty}$ is equivalent to $\{f \in C^\alpha : \hat{f}(0) = 0\}$. In addition, the space $\dot{B}^{\alpha}_{2,2}$ is equivalent to the homogeneous Sobolev space $\dot{H}^\alpha$ for all $\alpha \in \R$.
\end{remark}

\noindent The $\alpha$-H\"{o}lder spaces in some sense are superseded by the homogeneous Besov spaces, since these spaces are well defined for negative regularity exponents. But in situations when the regularity is positive, it can be useful to instead consider the Besov spaces as being $\alpha$-H\"{o}lder spaces.

\begin{definition}[\textbf{Fractional Laplacian}]\label{def:laplace}
    For $u:\T^2 \to \C$ and $s \in \R$, define the fractional Laplacian operator $\Lambda^s = (-\Delta)^{s/2}$ by
    $$
        \left(\Lambda^su\right)^{\wedge}(j) = |2\pi j|^s \hat{u}(j) \quad \forall j \in \Z^2.
    $$
    When $s < 0$ we restrict the above definition to just those $j \not = 0$.
\end{definition}
\noindent In defining the Reynolds stress error, we utilize the fact that every mean-zero vector valued function on $\T^2$ can be written as the divergence of a $2 \times 2$ symmetric matrix. We then formally invert the divergence operator to recover the Reynolds stress. This formal operation is made precise in the following definition, which follows \cite{BV2020} and \cite[Definition 4.2]{DLS13}.
\begin{definition}[\textbf{Inverse divergence}]\label{def:R}
    If $u:\T^2 \to \R$ is a function which is mean-zero on $\T^2$, then we put
    \[
        (\mathcal{R}u)^{ij} := (\partial_i \Delta^{-1} u^j + \partial_j \Delta^{-1}u^i) - (\delta_{ij}+ \partial_i \partial_j \Delta^{-1})\operatorname{div}\Delta^{-1}u
    \]
    for $i,j \in \{1,2\}$.
\end{definition}
\noindent First note that by inspection it is clear that $\mathcal{R}u$ is a symmetric matrix, and one can also check that $\operatorname{div}(\mathcal{R}u) = u$ for $u$ mean-zero.

\noindent The following lemma is standard, and can be deduced as a consequence of the Poisson summation formula.

\begin{lemma}[\textbf{$L^p$ boundedness of projection operators}]\label{lem:proj}
    $\mathbb{P}_{\leq \lambda}$ is a bounded operator from $L^p$ to $L^p$ for $1 \leq p \leq \infty$ with operator norm independent of $\lambda$.
\end{lemma}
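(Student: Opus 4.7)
The plan is to realize $\mathbb{P}_{\leq \lambda}$ on $\T^2$ as convolution with a periodic kernel obtained as a periodization of a nicely behaved kernel on $\R^2$, and then apply Young's convolution inequality. Concretely, let $K : \R^2 \to \R$ be the inverse Fourier transform of $\hat K_{\simeq 1}$ on $\R^2$. Since $\hat K_{\simeq 1}$ is smooth and compactly supported, $K$ is a Schwartz function, and in particular $\|K\|_{L^1(\R^2)} < \infty$. For $\lambda > 0$, define the rescaled kernel $K_\lambda(x) = \lambda^2 K(\lambda x)$ on $\R^2$, whose Fourier transform is exactly $\hat K_{\simeq 1}(\xi / \lambda)$; by a change of variables $\|K_\lambda\|_{L^1(\R^2)} = \|K\|_{L^1(\R^2)}$ for every $\lambda$.

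Next I would invoke the Poisson summation formula, which is exactly the hypothesis the lemma allows. It gives that the periodized kernel
$$
\widetilde K_\lambda(x) := \sum_{n \in \Z^2} K_\lambda(x + n)
$$
has Fourier coefficients $\widehat{\widetilde K_\lambda}(k) = \hat K_{\simeq 1}(k/\lambda)$ for every $k \in \Z^2$. Hence $\widetilde K_\lambda$ is precisely the convolution kernel on $\T^2$ whose associated operator $f \mapsto \widetilde K_\lambda * f$ agrees with $\mathbb{P}_{\leq \lambda}$ as defined in Definition \ref{def:projs}. Moreover,
$$
\|\widetilde K_\lambda\|_{L^1(\T^2)} \leq \|K_\lambda\|_{L^1(\R^2)} = \|K\|_{L^1(\R^2)},
$$
which is a finite constant independent of $\lambda$.

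Finally, Young's convolution inequality on $\T^2$ yields
$$
\|\mathbb{P}_{\leq \lambda} f\|_{L^p(\T^2)} = \|\widetilde K_\lambda * f\|_{L^p(\T^2)} \leq \|\widetilde K_\lambda\|_{L^1(\T^2)} \|f\|_{L^p(\T^2)} \leq \|K\|_{L^1(\R^2)} \|f\|_{L^p(\T^2)}
$$
for every $1 \leq p \leq \infty$, with the constant $\|K\|_{L^1(\R^2)}$ independent of $\lambda$, as claimed.

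The only genuinely delicate ingredient is the Poisson summation step, which must be justified pointwise; this is standard because $K$ is Schwartz (so both $K_\lambda$ and $\hat K_{\simeq 1}(\cdot / \lambda)$ decay fast enough to make the summation identity hold in the strong sense). Everything else is a routine application of rescaling and Young's inequality.
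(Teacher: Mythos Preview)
Your proof is correct and follows exactly the route the paper indicates: the paper does not give a proof but simply states that the lemma ``is standard, and can be deduced as a consequence of the Poisson summation formula,'' which is precisely what you carry out in detail via periodization of the Schwartz kernel and Young's inequality.
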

\noindent Geometric lemmas are standard tools in the convex integration literature; our formulation here follows \cite[Lemma 4.2]{BSV}.

\begin{lemma}[\textbf{Reconstruction of symmetric tensors}]\label{lem:geom}
    Let $B(I,\epsilon)$ be the ball of radius $\epsilon$ around the identity matrix in the space of $2\times2$ symmetric matrices. We can choose $\epsilon > 0$ such that there exists a finite set $\Omega \subset \mathbb{S}^{1}$ and smooth positive functions $\Gamma_k \in C^{\infty}(B(I,\epsilon))$ for $k \in \Omega$ such that the following hold:
    \begin{enumerate}
        \item $5\Omega \subseteq \mathbb{Z}^2$;
        \item If $k \in \Omega$, then $-k \in \Omega$ and $\Gamma_k = \Gamma_{-k}$;
        \item For all $R \in B(I,\epsilon)$ we have
              \[ R = \frac{1}{2} \sum_{k \in \Omega} (\Gamma_k(R))^2 k^{\perp} \otimes k^{\perp}; \]
        \item if $k,k' \in \Omega$ and $k \not = -k'$ then $|k + k'| \geq 1/2$.
    \end{enumerate}
\end{lemma}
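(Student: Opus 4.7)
The plan is to prove this classical ``geometric lemma'' in the convex integration tradition by the following four steps: (i) choose a small finite set $\Omega \subset \mathbb{S}^1$ that satisfies the rationality (1), symmetry (2), and separation (4) conditions; (ii) verify that the rank-one symmetric matrices $\{k^\perp \otimes k^\perp\}_{k \in \Omega}$ span the $3$-dimensional space of symmetric $2\times 2$ matrices; (iii) solve for the coefficients at $R = I$ explicitly with strictly positive weights; and (iv) transfer the decomposition to a neighborhood of $I$ by linearity (or, if we use extra directions, by the implicit function theorem) and then take positive square roots to produce the smooth functions $\Gamma_k$.

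For step (i), the requirement $5\Omega \subset \Z^2$ combined with $|k| = 1$ forces every $k \in \Omega$ to have the form $(a/5, b/5)$ with $a^2 + b^2 = 25$, so the available pool consists of the $12$ Pythagorean unit vectors $\{\pm(1,0),\ \pm(0,1),\ \pm(3/5, \pm 4/5),\ \pm(4/5, \pm 3/5)\}$. Several pairs among these (for instance $(3/5, 4/5)$ and $-(4/5, 3/5)$) have $|k + k'| = \sqrt{2}/5 < 1/2$, so the full list violates (4). My proposed choice is the $6$-element symmetric subset
\[
\Omega := \{\pm(1,0),\ \pm(3/5, 4/5),\ \pm(3/5, -4/5)\},
\]
which is invariant under $k \mapsto -k$, satisfies $5\Omega \subset \Z^2$, and for which a direct arithmetic check on the $\binom{6}{2}$ pairs confirms $|k + k'| \geq 1/2$ for all non-opposite $k, k' \in \Omega$.

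For steps (ii)--(iv), set $k_1 = (1,0)$, $k_2 = (3/5, 4/5)$, $k_3 = (3/5, -4/5)$, and write $\gamma_i := \Gamma_{k_i}(R)^2 = \Gamma_{-k_i}(R)^2$. A quick computation shows the three matrices $k_i^\perp \otimes k_i^\perp$ are linearly independent (hence a basis of symmetric $2\times 2$ matrices), so condition (3) becomes the linear system $R = \sum_{i=1}^3 \gamma_i\, k_i^\perp \otimes k_i^\perp$, uniquely solvable for $\gamma(R)$ with $\gamma$ depending linearly and smoothly on $R$. Solving at $R = I$ gives $\gamma_1 = 7/16$, $\gamma_2 = \gamma_3 = 25/32$, all strictly positive. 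By continuity of $R \mapsto \gamma(R)$, each $\gamma_i$ remains positive on some open ball $B(I,\epsilon)$, and I may then define $\Gamma_k(R) := \sqrt{\gamma_k(R)}$, which is smooth and positive on $B(I,\epsilon)$ and satisfies $\Gamma_k = \Gamma_{-k}$ by construction.

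The only delicate point is step (i): balancing the rationality condition (1), which restricts us to the 12 Pythagorean directions, against the separation condition (4), which forbids the nearly-antipodal pairs and so rules out many subsets. A careless choice yields either a set too small to span $\mathrm{Sym}_2(\R)$, or one for which the linear system at $R=I$ forces some $\gamma_i = 0$, destroying positivity; the proposed 3-pair configuration is designed to thread exactly this needle. Once $\Omega$ is fixed, the remaining steps are routine linear algebra and continuity.
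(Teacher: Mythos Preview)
Your proposal is correct and takes essentially the same approach as the paper: you choose the identical set $\Omega = \{\pm e_1, \pm(3/5,4/5), \pm(3/5,-4/5)\}$, verify the combinatorial conditions, and extract the coefficients by writing $R$ in the basis $\{k_i^\perp \otimes k_i^\perp\}$. Your explicit values $\gamma_1 = 7/16$, $\gamma_2 = \gamma_3 = 25/32$ agree with the formulas the paper derives later (see \eqref{eq:lin_combo} and the display following it), and your use of direct linear algebra in place of the inverse function theorem is a harmless simplification, since with exactly three directions the coefficient map is linear and invertible.
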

\noindent We will choose $\Omega = \{\pm e_1, \pm (3/5,4/5), \pm (3/5,-4/5)\}$. To prove Lemma \ref{lem:geom}, one writes the identity as a linear combination of matrices of the form $k^{\perp} \otimes k^{\perp}$ for $k \in \Omega$ and then applies the inverse function theorem. See the discussion after \cite[Lemma 4.2]{BSV} for more details.\\
As mentioned in the introduction, the building blocks for our velocity increments will be a new class of intermittent functions which we refer to as intermittent blobs. Their construction is based on standard constructions given for intermittent jets but adapted to 2 dimensions; we refer the reader to \cite{BCV} and \cite{BV2020} for constructions of intermittent jets in three dimensions. One of the crucial differences between these intermittent jets and intermittent blobs is the lack of anisotropy in the blobs. This means that, geometrically, the blobs have no notion of direction. This lack of directionality will be handled in our velocity increment where we weigh the $k^\perp$ direction more than the $k$ direction. In this sense, all geometric information concerning direction is encoded in the increment.

\begin{lemma}[\textbf{Intermittent blobs}]\label{lem:boldW}
    Let $\lambda_{q+1}$ be a large integer and take $0 < \epsilon < 1$ such that $\lambda_{q+1}^{\epsilon}$ is also an integer. For each $k \in \Omega$ from Lemma \ref{lem:geom}, there exist smooth $\rho^k_{q+1}:\T^2 \to \R$ such that
    \begin{enumerate}
        \item\label{w:2} $\int_{\T^2} \rho_{q+1}^k = 0$;
        \item\label{w:3} $\Vert \nabla^\alpha \rho_{q+1}^k \Vert_{L^p(\T^2)} \lesssim \lambda_{q+1}^{(1-\epsilon)\left(1-\frac{2}{p}\right)} \lambda_{q+1}^{|\alpha|}$;
        \item\label{w:4} $\rho^k_{q+1}$ is $\left(\frac{\T}{\lambda_{q+1}^{\epsilon}}\right)^2$-periodic;
        \item\label{w:5} $\rho_{q+1}^k = \rho_{q+1}^{-k}$.
    \end{enumerate}
\end{lemma}

\begin{proof}
    Let $\phi:\R^2 \to \R$ be smooth, radially symmetric, supported in $B(0,1)$, mean-zero, and $L^2$ normalized. Define $\rho_{q+1}^k:\R^2 \to \R$ by
    \begin{equation}\label{eq:rho}
        \rho^k_{q+1}(x) = \sum_{n,m \in \Z} \lambda_{q+1}^{1-\epsilon} \phi\left(5\lambda_{q+1}k \cdot x +\lambda_{q+1}^{1-\epsilon}n, 5\lambda_{q+1}k^\perp \cdot x + \lambda_{q+1}^{1-\epsilon}m\right).
    \end{equation}
    We claim this function will have all of the desired properties. Items \ref{w:2} and \ref{w:4} are obvious. \ref{w:5} follows from utilizing the radial symmetry of $\phi$. For \ref{w:3}, our strategy will be to obtain the desired estimate for $p = 1$ and $p = \infty$, and then use interpolation to obtain the desired bound for all intermediate values of $p$. First, note that taking derivatives will cost a factor of $\lambda_{q+1}$, so without loss of generality we may assume that $|\alpha| = 0$. We start with the $L^\infty$ estimate. For this, note for $\lambda_{q+1}$ large enough, the terms inside the summation in ~\eqref{eq:rho} will be disjoint. Thus
    \begin{equation}\label{eq:rho_L^inf_est}
        \Vert \rho^k_{q+1} \Vert_{L^\infty(\T^2)} \lesssim \lambda_{q+1}^{1-\epsilon} \Vert \phi \Vert_{L^{\infty}(\R^2)} \simeq \lambda_{q+1}^{1-\epsilon}.
    \end{equation}
    Now for the $L^1$ estimate, each term in ~\eqref{eq:rho} is supported in a ball of measure $\lambda_{q+1}^{-2}$. Each of these supports is contained within a rectangle of area $\lambda_{q+1}^{-2\epsilon}$. Hence
    $$
        \left|\operatorname{supp}\left(\rho_{q+1}^k\right) \cap [0,1]^2 \right| \simeq \frac{\lambda_{q+1}^{-2}}{\lambda_{q+1}^{-2\epsilon}} = \lambda_{q+1}^{2(\epsilon-1)}.
    $$
    Utilizing this observation as well as ~\eqref{eq:rho_L^inf_est} we have
    \begin{equation}\label{eq:rho_L1_est}
        \Vert \rho_{q+1}^k \Vert_{L^1(\T^2)} \lesssim \Vert \rho_{q+1}^k \Vert_{L^\infty(\T^2)} \left|\operatorname{supp}\left(\rho_{q+1}^k\right) \cap [0,1]^2 \right| \lesssim \lambda_{q+1}^{\epsilon-1}.
    \end{equation}
    Interpolating (see \cite{Stein}, \cite{Triebel}) between the estimates provided by ~\eqref{eq:rho_L^inf_est} and ~\eqref{eq:rho_L1_est} gives \ref{w:3} and completes the proof.
\end{proof}

\noindent A useful tool for us will be to represent the intermittent blob using its Fourier series expansion. Thanks to the construction outlined above and the Poisson summation formula, such a representation is simple to obtain.

\begin{lemma}[\textbf{Fourier series representation of the intermittent blob}]\label{lem:fourier}
    The Fourier series representation of $\rho_{q+1}^{k}$ is
    \begin{equation}\label{eq:fourier_series}
        \rho_{q+1}^{k}(x) = \sum_{n,m \in \Z} \lambda_{q+1}^{\epsilon-1} \hat{\phi}\left(\lambda_{q+1}^{\epsilon-1}n,\lambda_{q+1}^{\epsilon-1}m\right) e^{2\pi i 5\lambda_{q+1}^{\epsilon}  (nk + mk^\perp) \cdot x}.
    \end{equation}
\end{lemma}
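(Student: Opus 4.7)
The plan is to reduce the two-dimensional Fourier expansion of $\mathbb{W}_{q+1}^{k^\perp}$ to a one-dimensional Fourier series expansion of the scalar profile $\rho_{q+1}^{k^\perp}$, using the Poisson summation formula on the periodization step in the construction.

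First, I would unpack the construction: $\mathbb{W}_{q+1}^{k^\perp}(x) = \rho_{q+1}^{k^\perp}(x)\,k^\perp = \chi_{q+1}^{k^\perp}(\lambda_{q+1}^{1/2}x)\,k^\perp$, where $\chi_{q+1}^{k^\perp}(x) = \sum_{n} g(k \cdot x + n/5)$ for the one-dimensional profile $g(t) := \lambda_{q+1}^{1/4}\phi(\lambda_{q+1}^{1/2}t)$. The crucial observation is that the original profile $\phi(\lambda_{q+1}^{1/2} k \cdot x)$ depends only on $t = k \cdot x$ and, because $5k$ is a primitive element of $\Z^2$, the set $\{k \cdot m : m \in \Z^2\}$ equals $\tfrac{1}{5}\Z$. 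Hence the correct way to produce a $\Z^2$-periodic function from the compactly supported profile is to periodize the one-dimensional function $g$ with period $1/5$ in the variable $t = k \cdot x$ (summing in a direction transverse to $k^\perp$ is unnecessary since the profile is constant in $k^\perp$).

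Next I would apply the one-dimensional Poisson summation formula to the period-$(1/5)$ periodization: if $g \in \mathcal{S}(\R)$, then $\sum_{n \in \Z} g(t + n/5) = 5\sum_{m \in \Z} \hat g(5m)\, e^{2\pi i (5m) t}$. A direct change of variable gives $\hat g(\xi) = \lambda_{q+1}^{-1/4}\,\hat\phi(\lambda_{q+1}^{-1/2}\xi)$. Substituting these into the expression for $\chi_{q+1}^{k^\perp}$, and then replacing $x$ by $\lambda_{q+1}^{1/2}x$ to pass from $\chi_{q+1}^{k^\perp}$ to $\rho_{q+1}^{k^\perp}$, produces a sum of pure exponentials $e^{2\pi i\, 5\lambda_{q+1}^{1/2} m\, k \cdot x}$ multiplied by coefficients expressed in terms of $\hat\phi$ evaluated at the appropriately rescaled frequencies, which is \eqref{eq:fourier_series} after relabeling the index. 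Multiplying through by $k^\perp$ (using item \ref{w:2} of Lemma \ref{lem:boldW}) gives the desired formula.

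The only real subtlety—and in my view the main obstacle—is identifying the correct period for the 1D periodization. It is tempting to attempt a naive $\Z^2$-periodization by summing $\lambda_{q+1}^{1/4}\phi(\lambda_{q+1}^{1/2}k \cdot (x+m))$ over $m \in \Z^2$, but since the summand is constant in the $k^\perp$ direction this sum diverges (there are infinitely many lattice translates giving the same shift in $k \cdot x$). The resolution is precisely the observation that $k \cdot m \in \tfrac{1}{5}\Z$, which identifies the true one-dimensional period as $1/5$ and is where the factor of $5$ appearing in the exponent of \eqref{eq:fourier_series} is ultimately produced. Once this is recognized, the remaining steps are a direct application of Poisson summation and the scaling law for the Fourier transform.
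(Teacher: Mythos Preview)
Your overall strategy---reduce to a one-dimensional Poisson summation in the directional variable---is the right one and is exactly what the paper does. The gap is in the specific periodization you chose. If you carry your calculation through, the period-$1/5$ Poisson formula applied to $g(t)=\lambda_{q+1}^{1/4}\phi(\lambda_{q+1}^{1/2}t)$ yields coefficients $5\hat g(5m)=5\lambda_{q+1}^{-1/4}\hat\phi(5\lambda_{q+1}^{-1/2}m)$, not the $\lambda_{q+1}^{-1/4}\hat\phi(\lambda_{q+1}^{-1/2}m)$ appearing in \eqref{eq:fourier_series}; so your final sentence ``which is \eqref{eq:fourier_series} after relabeling the index'' is where the argument actually fails. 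The sketch in Lemma~\ref{lem:boldW} does not fully pin down $\chi_{q+1}^{k^\perp}$; the paper's proof makes the periodization explicit as $\chi_{q+1}^{k^\perp}(x)=\sum_n \lambda_{q+1}^{1/4}\phi(\lambda_{q+1}^{1/2}(5k\cdot x + n))$, i.e.\ one first \emph{rescales} the directional variable by $5$ and then periodizes with period $1$. With that choice the standard (period-$1$) Poisson formula gives \eqref{eq:fourier_series} immediately. Your period-$1/5$ periodization of the unscaled profile defines a different, though qualitatively similar, Mikado flow.

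There is also a secondary slip: your claim that $5k$ is primitive and hence $\{k\cdot m:m\in\Z^2\}=\tfrac{1}{5}\Z$ fails for $k=\pm e_1\in\Omega$, where $5k=(5,0)$ is not primitive and the range is $\Z$. So your periodization would in any case have to be handled non-uniformly across $\Omega$. The paper's rescaling approach sidesteps this: replacing $k\cdot x$ by $5k\cdot x$ and periodizing with period $1$ works uniformly because $5k\in\Z^2$ for every $k\in\Omega$, regardless of primitivity.
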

\begin{proof}
    Applying the Poisson summation formula to ~\eqref{eq:rho} gives ~\eqref{eq:fourier_series}.
\end{proof}

\noindent The following lemma will be useful in the standard high-low product estimates we will have to perform in Section \ref{section_oscillation_error}. Morally it allows us to treat the low frequency function in the high-low product as a constant function thus simplifying some computations.

\begin{lemma}[\textbf{Kato-Ponce-type product estimate}]\label{lem:pseudo_diff_cont}
    Fix $\alpha,\beta \in C^{\infty}(\T^2)$ with $\beta$ having zero mean. Then for $s \geq 0$ we have
    $$
        \Vert \alpha \beta \Vert_{\dot{H}^{-s}} \lesssim \Vert \alpha \Vert_{C^s} \Vert \beta \Vert_{\dot{H}^{-s}}.
    $$
\end{lemma}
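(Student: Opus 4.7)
The plan is to prove the lemma via duality. I would write
\[
\|\alpha\beta\|_{\dot H^{-s}}=\sup\{|\langle\alpha\beta,\phi\rangle|:\phi\in C^\infty(\T^2),\ \|\phi\|_{\dot H^s}\leq 1\},
\]
where the supremum is naturally taken over mean-zero test functions $\phi$. Since $\beta$ has mean zero, one has the pointwise identity $\int\alpha\beta\phi\,dx=\int\beta\,(\alpha\phi)\,dx$, so that $\langle\alpha\beta,\phi\rangle=\langle\beta,\alpha\phi\rangle$ interpreted in the $\dot H^{-s}\times \dot H^s$ duality. By the definition of the $\dot H^{-s}$ norm recalled earlier,
\[
|\langle\beta,\alpha\phi\rangle|\leq \|\beta\|_{\dot H^{-s}}\|\alpha\phi\|_{\dot H^s},
\]
and the lemma reduces to the dual multiplier-type estimate $\|\alpha\phi\|_{\dot H^s}\lesssim \|\alpha\|_{C^s}\|\phi\|_{\dot H^s}$ for smooth mean-zero $\phi$.

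For this multiplier estimate, the mean-zero hypothesis on $\phi$ is what allows passage between the homogeneous and inhomogeneous Sobolev spaces: on $\T^2$, Poincar\'e yields $\|\phi\|_{L^2}\leq \|\phi\|_{\dot H^s}$ for all $s\geq 0$ (since $|k|\geq 1$ on $\Z^2\setminus\{0\}$ means $|k|^{2s}\geq 1$), and hence $\|\phi\|_{H^s}\simeq \|\phi\|_{\dot H^s}$. The desired bound then becomes the classical statement that $C^s$ acts as a multiplier on $H^s(\T^2)$: for integer $s$, this follows directly from the Leibniz rule, since $\|\partial^\gamma(\alpha\phi)\|_{L^2}\leq\sum\binom{\gamma}{\delta}\|\partial^\delta\alpha\|_{L^\infty}\|\partial^{\gamma-\delta}\phi\|_{L^2}\lesssim\|\alpha\|_{C^s}\|\phi\|_{H^s}$; for general $s\geq 0$ one carries out a Bony paraproduct decomposition $\alpha\phi=T_\alpha\phi+T_\phi\alpha+R(\alpha,\phi)$ using the Littlewood--Paley projectors from Definition~\ref{def:projs}, and bounds each piece via Kato--Ponce-type estimates.

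The main obstacle I anticipate is the control of the paraproduct remainder $R(\alpha,\phi)$ for non-integer $s$. This term sums the balanced interactions $\Delta_j\alpha\,\tilde\Delta_j\phi$ between comparable frequencies of $\alpha$ and $\phi$, whose Fourier support can extend down to the zero frequency and therefore cannot be localized to a single dyadic shell. The standard way around this is to exploit the fact that the sum of regularities $s+s=2s$ is strictly positive, so that $R(\alpha,\phi)$ actually lies in the smaller space $\dot H^{2s}\subset\dot H^s$ with norm controlled by $\|\alpha\|_{C^s}\|\phi\|_{\dot H^s}$; together with the straightforward bounds $\|T_\alpha\phi\|_{\dot H^s}\lesssim\|\alpha\|_{L^\infty}\|\phi\|_{\dot H^s}$ and $\|T_\phi\alpha\|_{\dot H^s}\lesssim\|\alpha\|_{C^s}\|\phi\|_{\dot H^s}$ (the latter using $\|\Delta_j\alpha\|_{L^\infty}\lesssim 2^{-sj}\|\alpha\|_{C^s}$ and the fact that only $j\sim k$ contributes to $\Delta_k T_\phi\alpha$), this closes the multiplier estimate and hence the lemma.
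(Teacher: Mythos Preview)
Your proposal is correct and follows essentially the same route as the paper: both argue by duality, writing $\langle\alpha\beta,\phi\rangle=\langle\beta,\alpha\phi\rangle$ and reducing to the multiplier estimate $\|\alpha\phi\|_{\dot H^s}\lesssim\|\alpha\|_{C^s}\|\phi\|_{\dot H^s}$ for mean-zero $\phi$. The only difference is that the paper dispatches this last step by quoting the fractional Leibniz rule on the torus from \cite{BOZ}, namely $\|\alpha\phi\|_{\dot H^s}\lesssim\|\alpha\|_{L^\infty}\|\phi\|_{\dot H^s}+\|\Lambda^s\alpha\|_{L^\infty}\|\phi\|_{L^2}$, rather than sketching a paraproduct argument as you do.
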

\begin{proof}
    Fix $\phi \in \dot{H}^s$ with norm $1$ and mean-zero, and then write
    \begin{equation}\label{eq:sobolev_ineq}
        \left|\left\langle \alpha\beta, \phi \right\rangle\right| = \left|\langle \beta, \alpha\phi \rangle\right| \leq \Vert \beta \Vert_{\dot{H}^{-s}} \Vert \alpha \phi \Vert_{\dot{H}^s}.
    \end{equation}
    Using \cite[~Proposition 1]{BOZ} we have that
    \begin{equation}\label{eq:sobolev_ineq2}
        \Vert \alpha \phi \Vert_{\dot{H}^s} \lesssim \Vert \alpha \Vert_{L^{\infty}} \Vert \phi \Vert_{\dot{H}^s} + \Vert \Lambda^s \alpha \Vert_{L^{\infty}} \Vert \phi \Vert_{L^2} \lesssim \Vert \alpha \Vert_{C^s} \Vert \phi \Vert_{\dot{H}^s}.
    \end{equation}
    From ~\eqref{eq:sobolev_ineq} and ~\eqref{eq:sobolev_ineq2} we deduce
    $$
        \Vert \alpha \beta \Vert_{\dot{H}^{-s}} = \sup_{\Vert \phi \Vert_{\dot{H}^s}=1} \left|\left\langle \alpha\beta, \phi \right\rangle\right| \lesssim \Vert \alpha \Vert_{C^s} \Vert \beta \Vert_{\dot{H}^{-s}}
    $$
    which completes the proof.
\end{proof}

\section{Convex Integration Scheme}\label{section3}

Instead of working with ~\eqref{eq:SQG} directly, we instead choose to work with the relaxed momentum formulation:
\begin{equation}\label{eq:relaxed}
    \begin{cases}
        u \cdot \nabla v - (\nabla v)^T \cdot u  + \Lambda^{\gamma} v + \nabla p = 0 \\
        \operatorname{div}(v) = 0                                                    \\
        v = \Lambda^{-1} u
    \end{cases}
\end{equation}
This is first considered in \cite{BSV} as a solution to the odd multiplier obstruction. For us, not only does this help us avoid the odd multiplier obstruction, but this formulation is also more amenable to the convex integration scheme we wish to employ since a natural tensor product structure appears, allowing us to use the properties contained in Lemma \ref{lem:geom} and Lemma \ref{lem:boldW}. To give a high level overview of what is to follow, we start by assuming $(u_q,v_q,R_q,p_q)$ solve
\begin{equation}\label{eq:relax_Reynolds}
    \begin{cases}
        u_q \cdot \nabla v_q - (\nabla v_q)^T \cdot u_q + \Lambda^{\gamma} v_q + \nabla p_q = \operatorname{div}(R_q) \\
        \operatorname{div}(v_q) = 0                                                                                   \\
        v_q = \Lambda^{-1} u_q
    \end{cases}
\end{equation}
Then for carefully chosen $w_{q+1}$ if we set
$$
    v_{q+1} = v_q + w_{q+1}
$$
and
$$
    u_{q+1} = u_q + \Lambda w_{q+1}
$$
we will show that $v_q \to v \not = 0$ in both the $\dot{B}^\alpha_{\infty,\infty}$ and $\dot{H}^\alpha$ norm. From this, we will deduce that $u_q \to u$ in $\dot{H}^{\alpha-1}$, and since $u = \Lambda v$, then $u \not = 0$.\\
By applying the perpendicular divergence we have, using
$$
    \nabla^{\perp} \cdot \left(u_q \cdot \nabla v_q - (\nabla v_q)^T \cdot u_q\right) = \nabla^{\perp} \cdot \left[(\nabla^{\perp} \cdot v_q)u_q \right] = u_q \cdot \nabla(\nabla^{\perp} \cdot v_q) = \operatorname{div}((\nabla^{\perp} \cdot v_q) u_q)
$$
that
$$
    \operatorname{div}((\nabla^{\perp} \cdot v_q) u_q) + \Lambda^{\gamma}(\nabla^{\perp} \cdot v_q) = \nabla^{\perp} \cdot \operatorname{div}(R_q).
$$
This leads us to say that $(u_q,v_q,R_q)$ form a solution to the relaxed momentum equation with Reynolds stress in the weak paraproduct sense if
\begin{equation}\label{eq:weak_para_soln_relax}
    -\left\langle v_q^j, \left(\Lambda^{\gamma} \nabla^{\perp} \phi\right)^j \right\rangle_{\dot{H}^{s},\dot{H}^{-s}} + \left\langle \mathbb{P}_{\not= 0}((\nabla^{\perp} \cdot v_q) u_q^j), \partial_j \phi, \right\rangle_{\dot{H}^{s'}, \dot{H}^{-s'}} =  \langle R_q^{ij},(\nabla(\nabla^{\perp}\phi))^{ji} \rangle_{\dot{H}^{-4},\dot{H}^{4}}
\end{equation}
for some $s,s' < 0$. We will then show as $q \to \infty$ that the right hand side of ~\eqref{eq:weak_para_soln_relax} tends to $0$. This leads to a solution in the following sense:
\begin{definition}[\textbf{Weak paraproduct solutions to ~\eqref{eq:relaxed}}]\label{def:weak_para_soln_relax}
    If $v \in \dot{H}^s$, $s < 0$, $\operatorname{div}(v) = 0$ in the weak sense, and $u = \Lambda v \in \dot{H}^{s-1}$, we say $u$ and $v$ form a weak paraproduct solution to the relaxed SQG momentum equation if there is $s' \in \R$ such that $\mathbb{P}_{\not = 0}((\nabla^{\perp} \cdot v)u)$ is well defined as a paraproduct in $\dot{H}^{s'}$ in the sense of Definition \ref{def:paras} and
    $$
        -\left\langle v, \Lambda^{\gamma} \nabla^{\perp}\phi \right\rangle_{\dot{H}^{s},\dot{H}^{-s}} + \left\langle \mathbb{P}_{\not= 0}((\nabla^{\perp} \cdot v) u), \nabla \phi, \right\rangle_{\dot{H}^{s'}, \dot{H}^{-s'}} = 0
    $$
    for all smooth $\phi$.
\end{definition}
\noindent Upon making the substitution $\theta = -\nabla^{\perp} \cdot v$ we recover our weak paraproduct solution to the SQG equation in the sense of Definition \ref{def:weak_para_soln} so these definitions are equivalent in the sense that having a weak paraproduct solution to one will give a weak paraproduct solution to the other.

\section{Inductive Proposition and Proof of Theorem \ref{thm:main}}\label{section4}

\begin{proposition}[\textbf{Inductive Proposition}]\label{prop:ind}
    Take $\alpha < 1/2$ rational and at least as large as the regularity indicated in Theorem \ref{thm:main}. We make the following inductive assumptions about $(u_q,v_q,p_q,R_q)$:
    \begin{enumerate}
        \item\label{i:2} $u_q$ and $v_q$ have zero mean and are divergence free;
        \item\label{i:1} $(u_q,v_q,p_q,R_q)$ are smooth solutions of ~\eqref{eq:relax_Reynolds};
        \item\label{i:3}$\Vert R_q \Vert_{\dot{H}^{-4}} < 2^{-q}$;
        \item\label{i:4} For all $q' \leq q$ we have
              $$
                  \Vert v_{q'} - v_{q'-1} \Vert_{\dot{B}^{\alpha}_{\infty,\infty}} + \Vert v_{q'} - v_{q'-1} \Vert_{\dot{H}^{\alpha}} \lesssim 2^{-q'}
              $$
              where the implicit constant depends on $\alpha$ but not $q$ or $q'$;
        \item\label{i:5} There exists a constant $\delta > 0$ independent of $q$ such that $\Vert v_q \Vert_{L^1} > (1+2^{-q})\delta$.
        \item\label{i:6} Recalling the frequency projections $\mathbb{P}_{2^j}$ from Definition \ref{def:projs}, there exists a unique $j$ such that $\mathbb{P}_{2^j}(v_{q} - v_{q-1}) = v_{q} - v_{q-1}$.
        \item\label{i:7} There are $C_1,C_2 > 0$ independent of $q$ such that
              $$
                  \sum_{\substack{n,m \leq q\\ n \not  = m}} \Vert (\nabla^{\perp} \cdot (v_n - v_{n-1}) (u_m - u_{m-1}) \Vert_{\dot{H}^{-5}} < C_1 - 2^{-q}
              $$
              and
              $$
                  \sum_{n \leq q} \Vert (\nabla^{\perp} \cdot (v_n - v_{n-1}) (u_n - u_{n-1}) \Vert_{\dot{H}^{-5}} < C_2 - 2^{-q + 100}.
              $$
        \item\label{i:8} In the case when $\alpha \geq  -1/2$, for all $q' \leq q$ we have
              $$
                  \Vert \Lambda^{1/2}(v_{q'} - v_{q'-1})\Vert_{L^p(\T^2)} \lesssim 2^{-q'}
              $$
              for all $1 \leq p < 2$. Note the implicit constant depends on $p$ but not $q$ or $q'$. In the case when $\alpha < -1/2$, for every $0 < \epsilon < 1$ and every $0 < \epsilon' < 1$ and $1 \leq p < 2$ such that
              \begin{equation}\label{eq:conds}
                  p < \frac{2}{2-\epsilon'} \quad \text{and} \quad \epsilon < \frac{p\epsilon' - 2p + 2}{2 - p}
              \end{equation}
              we have
              $$
                  \left\Vert \Lambda^{3/2-\epsilon'}(v_{q'} - v_{q'-1})\right\Vert_{L^p(\T^2)} \lesssim 2^{-q'}.
              $$
    \end{enumerate}
\end{proposition}
\noindent With these assumptions, we prove the main result. The rest of the paper will be dedicated to the proof of Proposition \ref{prop:ind}.
\begin{proof}[Proof of Theorem~\ref{thm:main} using Proposition~\ref{prop:ind}] We start by verifying the base case of the induction hypothesis. Put
    $v_0 = A\sin\left(2\pi x_1\right)e_2$ for some constant $A$ to be chosen later and $p_0 = 0$. Then
    \begin{equation*}
        \begin{split}
            u_0 & = \Lambda v_0                                      \\
                & = A\Lambda\left(\sin(2\pi  e_1 \cdot x)\right)e_2  \\
                & = |2\pi e_1|A \sin\left(2\pi e_1 \cdot x\right)e_2 \\
                & = 2\pi A \sin\left(2\pi e_1 \cdot x\right)e_2.
        \end{split}
    \end{equation*}
    And similarly
    $$
        \Lambda^{\gamma} v_0 = (2\pi)^\gamma A \sin\left(2\pi e_1 \cdot x\right)e_2.
    $$
    Thus, if we set
    $$
        R_0 =
        \begin{bmatrix}
            \frac{\pi}{2}A^2 \cos(4\pi x_1)   & -(2\pi)^{\gamma -1}A \cos(2\pi x_1) \\
            -(2\pi)^{\gamma-1}A\cos(2\pi x_1) & 0                                   \\
        \end{bmatrix},
    $$
    then one can check that
    $$
        u_0 \cdot \nabla v_0 - (\nabla v_0)^T \cdot u_0 + \Lambda^{\gamma} v_0 = \operatorname{div}(R_0).
    $$
    And so, \ref{i:1} is satisfied. Item \ref{i:2} is obvious. Choosing $A > 0$ small enough ensures \ref{i:3}, \ref{i:4}, and \ref{i:7} (we set $v_{-1}=0$). Taking $\delta = (1/4)\Vert v_0 \Vert_{L^1} = (2\pi)^{-1}A$ gives \ref{i:5}. Clearly
    $$
        \mathbb{P}_{1}(v_0 - v_{-1}) = \mathbb{P}_1(v_0) = v_0 = v_0 - v_{-1},
    $$
    giving \ref{i:6}. By choosing $C_1$ and $C_2$ large enough, we ensure \ref{i:7}. So, the base case has been verified.\\
    Now assume that Proposition \ref{prop:ind} is satisfied for all $q \geq 0$. Set
    $$
        v = \lim_{q \to \infty} v_q = \lim_{q \to \infty} \sum_{0 \leq q' \leq q} (v_{q'} - v_{q'-1}).
    $$
    We will first show that this limit exists in $\dot{B}^{\alpha}_{\infty,\infty} \cap \dot{H}^{\alpha}$. Using \ref{i:4} we have
    $$
        \sum_{0 \leq q' \leq q} \left(\Vert v_{q'} - v_{q'-1} \Vert_{\dot{B}^{\alpha}_{\infty,\infty}} + \Vert v_{q'} - v_{q'-1} \Vert_{\dot{H}^{\alpha}}\right) \lesssim_{\epsilon} \sum_{q' \leq q} 2^{-q'} \lesssim_{\epsilon} 1.
    $$
    Hence $\{v_q\}$ is Cauchy in both $\dot{B}^{\alpha}_{\infty,\infty}$ and $\dot{H}^{\alpha}$, and thus converges to $v$ in both spaces.\\
    Since $u_q = \Lambda v_q$, we see that $\{u_q\} \subset \dot{H}^{\alpha-1}$ and thus
    $$
     \Vert u_q - u \Vert_{\dot{H}^{\alpha-1}}  = \Vert v_q - v \Vert_{\dot{H}^{\alpha}} \lesssim 2^{-q} \to 0
    $$
    Thus, $u_q \to u$ in both $\dot{B}^{\alpha-1}_{\infty,\infty}$ and $\dot{H}^{\alpha-1}$ norms. Then since $\theta = - \nabla^{\perp} \cdot v$, this means that $\theta \in \dot{B}^{\alpha-1}_{\infty,\infty}$ and $\dot{H}^{\alpha-1}$ as well.
    \noindent
    Now, using \ref{i:5} and the convergence in $L^1$ norm we have
    $$
        (1 + 2^{-q}) \delta < \Vert v_q \Vert_{L^1} \leq \Vert v_q - v \Vert_{L^1} + \Vert v \Vert_{L^1}.
    $$
    Sending $q \to \infty$ we see that $v \not =0$. As a consequence $u$ is not identically $0$. Finally since $u = \Lambda^{-1} \nabla^{\perp} \theta$, we have $\theta$ must also not be identically $0$.
    \noindent
    Next we verify that $v$ is weakly divergence free. By this we mean for every smooth $\psi:\T^2 \to \C$ we have that
    $$
        \int_{\T^2} v \cdot \nabla \psi = 0.
    $$
    And indeed, this is an easy consequence of the $L^1$ convergence of $v_q$ to $v$ and the fact that each $v_q$ is divergence free in the classical sense. So using this $L^1$ convergence as well as integration by parts we have
    $$
        \int_{\T^2} v \cdot \nabla \psi = \lim_{q \to \infty} \int_{\T^2} v_q \cdot \nabla \psi = \lim_{q \to \infty} \int_{\T^2}- \operatorname{div}(v_q) \psi = 0.
    $$
    Hence $v$ is weakly divergence free.\\
    Finally for $\phi$ smooth we analyze
    \begin{equation}\label{eq:integral_form_relax_Reynolds}
        -\int_{\T^2} v_q \cdot \Lambda^{\gamma} \nabla^{\perp} \phi + \int_{\T^2} (\nabla^{\perp} \cdot v_q)u_q \cdot \nabla\phi = \int_{\T^2} (\nabla^{\perp} \cdot \operatorname{div}(R_q)) \phi.
    \end{equation}
    By \ref{i:1}, this equality is valid. For the integral on the right hand side of ~\eqref{eq:integral_form_relax_Reynolds}, using integration by parts and \ref{i:3} we have
    \begin{equation}\label{eq:rhs}
        \begin{split}
            \left|\int_{\T^2} (\nabla^{\perp} \cdot \operatorname{div}(R_q)) \phi\right| = \left|\int_{\T^2} R_q : \nabla(\nabla^{\perp} \phi)\right| \lesssim \Vert R_q \Vert_{\dot{H}^{-4}} \Vert \nabla(\nabla^{\perp} \phi) \Vert_{\dot{H}^{4}} \lesssim 2^{-q}
        \end{split}
    \end{equation}
    where for $2 \times 2$ matrices $A$ and $B$ we have that
    $$
        A : B = \sum_{i,j=1,2} A_{ij} B_{ji}.
    $$
    Hence from ~\eqref{eq:rhs} the right hand side of ~\eqref{eq:integral_form_relax_Reynolds} converges to $0$ as we send $q \to \infty$. Now since $v_q \to v$ in $L^1$ we have
    \begin{equation}\label{eq:dissipation_term}
        \begin{split}
            \lim_{q \to \infty} -\int_{\T^2} v_q \cdot \Lambda^{\gamma} \nabla^{\perp} \phi = -\int_{\T^2} v \cdot \Lambda^{\gamma} \nabla^{\perp} \phi = -\left\langle v, \Lambda^{\gamma}\nabla^{\perp}\phi \right\rangle_{\dot{H}^{-\epsilon},\dot{H}^{\epsilon}}.
        \end{split}
    \end{equation}
    From \ref{i:6} and \ref{i:7}, $\mathbb{P}_{\not=0}((\nabla^{\perp} \cdot v)u)$ exists as a paraproduct in $\dot{H}^{-5}$.
    Thus
    \begin{equation}\label{eq:nonlinear_term}
        \begin{split}
            \lim_{q \to \infty} \int_{\T^2} (\nabla^{\perp} \cdot v_q)u_q \cdot \nabla\phi & = \lim_{q \to \infty} \int_{\T^2} \mathbb{P}_{\not=0}\left((\nabla^{\perp} \cdot v_q)u_q\right) \cdot \nabla\phi \\
                                                                                           & = \lim_{q \to \infty} \int_{\T^2} \sum_{\substack{n,m\leq q                                                      \\j,j' \geq 0}}  \mathbb{P}_{\not=0}\left(\mathbb{P}_{2^j}(\nabla^{\perp} \cdot (v_n-v_{n-1})) \mathbb{P}_{2^{j'}}(u_m-u_{m-1})\right) \cdot \nabla \phi\\
                                                                                           & = \lim_{q \to \infty} \left\langle \sum_{\substack{n,m\leq q                                                     \\j,j' \geq 0}} \mathbb{P}_{\not=0}\left(\mathbb{P}_{2^j}(\nabla^{\perp} \cdot (v_n-v_{n-1})) \mathbb{P}_{2^{j'}}(u_m-u_{m-1})\right), \nabla \phi\right\rangle\\
                                                                                           & := \left\langle \sum_{\substack{n,m                                                                              \\j,j' \geq 0}} \mathbb{P}_{\not=0}\left(\mathbb{P}_{2^j}(\nabla^{\perp} \cdot (v_n-v_{n-1})) \mathbb{P}_{2^{j'}}(u_m-u_{m-1})\right), \nabla \phi\right\rangle\\
                                                                                           & = \left\langle \mathbb{P}_{\not=0}((\nabla^{\perp} \cdot v)u), \nabla \phi\right\rangle_{\dot{H}^{-5},\dot{H}^5}.
        \end{split}
    \end{equation}
    Notice the fourth equality in ~\eqref{eq:nonlinear_term} is defined this way using Definition \ref{def:paras}. Hence combining ~\eqref{eq:rhs}, ~\eqref{eq:dissipation_term}, and ~\eqref{eq:nonlinear_term} we see that
    \begin{equation*}
        \begin{split}
            -\left\langle v, \Lambda^{\gamma}\nabla^{\perp}\phi \right\rangle_{\dot{H}^{-\epsilon},\dot{H}^{\epsilon}} + \left\langle \mathbb{P}_{\not=0}((\nabla^{\perp} \cdot v)u), \nabla \phi\right\rangle_{\dot{H}^{-5},\dot{H}^5} & = \lim_{q \to \infty} \int_{\T^2} \left(-v_q \cdot \Lambda^{\gamma} \nabla^{\perp} \phi +  (\nabla^{\perp} \cdot v_q)u_q \cdot \nabla\phi\right) \\
                                                                                                                                                                                                                                        & = 0.
        \end{split}
    \end{equation*}
    So, $u$ and $v$ form our desired nonzero relaxed paraproduct solutions to ~\eqref{eq:relaxed} in the sense of Definition \ref{def:weak_para_soln_relax}, and thus we may also recover a nonzero relaxed paraproduct solution to ~\eqref{eq:SQG} in the sense of Definition \ref{def:weak_para_soln}.\\
    Now assume $\alpha \geq -1/2$. From \ref{i:8}, $\{\Lambda^{1/2}v_q\}$ is a Cauchy sequence in $L^p$ for all $1 \leq p < 2$. Since $\Lambda^{1/2}v_q$ converges to $\Lambda^{1/2}v$ in the sense of distributions, we must have that $\Lambda^{1/2}v_q$ converges to $\Lambda^{1/2}v$ in $L^p$. Since $u = \Lambda v$, we see easily that $\Lambda^{-1/2}u \in L^p$. Now using that
    \begin{equation}\label{eq:equiv_Riesz}
        \Vert Rf \Vert_{L^p(\T^2)} \simeq \Vert f \Vert_{L^p(\T^2)}
    \end{equation}
    for $f \in L^p(\T^2)$, $1 < p < \infty$ and $f$ mean-zero and the fact $\T^2$ is a finite measure space, we deduce that $\Lambda^{-1/2}\theta \in L^p$ for all $1 \leq p < 2$. The equivalence of the above norms is easy to observe from the classical $L^p$ boundedness of the Riesz transforms \cite{Stein}, together with the reconstruction formula
    $$
        f = - R_1R_1f - R_2R_2f
    $$
    which is valid since $f$ is mean-zero.\\
    Now in the situation when $\alpha < -1/2$, fix $0 < \epsilon < 1$ and $\epsilon'$ and $p > 1$ satisfying ~\eqref{eq:conds}. By \ref{i:8} and similar reasoning as before, we have that $\Lambda^{3/2 -\epsilon'}v$ is an element of $L^p$. The same holds for $\Lambda^{1/2-\epsilon'}u$, and again applying ~\eqref{eq:equiv_Riesz} we obtain the same for $\Lambda^{1/2-\epsilon'}\theta$. In the case when $p = 1$, we simply consider $\tilde{p} > 1$, repeat the previous reasoning, and then use the fact $\T^2$ is a finite measure space to conclude $\Lambda^{1/2-\epsilon'}u,\Lambda^{1/2-\epsilon'}\theta \in L^1(\T^2)$.
\end{proof}
\section{Proof of Proposition~\ref{prop:ind}}\label{section5}

Throughout the rest of the paper, we will work with parameters $\alpha$, $\epsilon$, and $\lambda_{q+1}$. The parameter $\alpha < 1/2$ should be chosen to be rational and at least as large as the indicated regularity in Theorem \ref{thm:main}. The intermittency parameter $\epsilon$ will quantify the amount of intermittency present in the intermittent blobs used in the construction. We will require that $0 < \epsilon < 1$ is chosen to be rational and satisfy
\begin{equation}\label{eq:beta_epsilon_est}
    \alpha  + \frac{1}{2} < \epsilon.
\end{equation}
This choice of $\epsilon$ can always be made since $\alpha < 1/2$. The frequency parameter $\lambda_{q+1}$ is given by
$$
    \lambda_{q+1} = 2^{6(\alpha\epsilon)^{-1}f(q+1)}
$$
for some $f:\N \to \N$ such that
$$
    \frac{f(q+1)}{\alpha\epsilon} \in \N.
$$
for all $q$. We also assume that $\lambda_{q+1}^\alpha$, $\lambda_{q+1}^{\epsilon}$ and $\lambda_{q+1}^{1/2}$ are all integers. The parameter $\lambda_{q+1}$ will be assumed to be large enough (and thus also $f(q+1)$) to satisfy the conditions given by equations \eqref{est:lambda_ineq}, \eqref{eq:dis_est}, \eqref{eq:spt_don_1}, \eqref{eq:spt_cond_2}, \eqref{eq:Nash_error_est}, \eqref{eq:Q^2_est}, \eqref{eq:Q11_final_est}, \eqref{eq:Q12Q13_est}, \eqref{eq:Q14_finalest},
\eqref{eq:leftover_terms_est}, \eqref{eq:C^alpha_est_3}, \eqref{eq:H^alpha_est_3},
\eqref{eq:wq+1_est_delta}, \eqref{eq:ind_est_1},
\eqref{eq:ind_est_2},
\eqref{eq:pk2_est},
\eqref{eq:I1_est},
and
~\eqref{eq:I2_est}. For notational convenience, we set $\sigma_{q+1} = \frac{5}{8}\lambda_{q+1}$. The reason for this seemingly arbitrary choice will be made clear in Section \ref{Sec:freq_support}. Clearly we can require $\sigma_{q+1} \in \N$ since we assume $\lambda_{q+1}$ is at least $8$.

\begin{remark}
    Notice that equation ~\eqref{eq:beta_epsilon_est} demonstrates the necessary tradeoff between regularity and intermittency.
\end{remark}

\subsection{Construction of $w_{q+1}$ and various properties}

\begin{definition}[\textbf{Definition of $w_{q+1}$}]\label{def:wq+1}
    Define the increment $w_{q+1}$ by
    \begin{equation}\label{eq:wq+1}
        w_{q+1} = \nabla^{\perp}\left(\frac{1}{2\pi  \sigma_{q+1}} \sum_{k \in \Omega} \mathbb{P}_{\leq \lambda_{q+1}}\left(a_k(R_q(x)) \rho_{q+1}^{k}(x)\right) \left[e^{2\pi i \sigma_{q+1} (k + 2k^\perp) \cdot x} + e^{2\pi i \sigma_{q+1}(k - 2k^\perp) \cdot x}\right]\right)
    \end{equation}
    where $\rho_{q+1}^k$ is the intermittent blob from Lemma \ref{lem:boldW} and
    \begin{equation}\label{eq:ak}
        a_k(R_q) = \frac{1}{(\lambda_{q+1}\epsilon_q)^{1/2}}  \Gamma_k\left(I - C \epsilon_q R_q\right)
    \end{equation}
    where $\Gamma_k$ is as in Lemma \ref{lem:geom} and
    \begin{equation}\label{eq:C_value}
        \begin{split}
            C^{-1} : & = \int_{\R^2} 160\pi |\hat{\phi}(x)|^2 \left(\left(x_2 + \frac{1}{4}\right)^2 - \left(x_1 + \frac{1}{8}\right)^2\right)                                                       \\
                     & \times \left|\left(x_1k + x_2k^\perp\right) + \frac{1}{8}\left(k + 2k^\perp\right)\right| \left|\hat{K}_{\simeq 1}\left(5\left(x_1k + x_2k^\perp\right)\right)\right|^2\, dx.
        \end{split}
    \end{equation}
    And $\epsilon_q$ is chosen to satisfy
    \begin{equation*}
        \epsilon_q < \frac{\epsilon_{\Gamma}}{C\Vert R_q \Vert_{L^{\infty}}}
    \end{equation*}
    where $\epsilon_{\Gamma}$ is chosen to ensure that Lemma \ref{lem:geom} is satisfied for all $k \in \Omega$. Also for fixed $k \in \Omega$ put
    \begin{equation*}
        w_{q+1,k} = \nabla^{\perp}\left(\frac{1}{2\pi \sigma_{q+1}} \mathbb{P}_{\leq \lambda_{q+1}}\left(a_k(R_q(x)) \rho_{q+1}^{k}(x)\right) \left[e^{2\pi i \sigma_{q+1} (k + 2k^\perp) \cdot x} + e^{2\pi i \sigma_{q+1} (k - 2k^\perp) \cdot x}\right]\right)
    \end{equation*}
\end{definition}

\begin{remark}
    $C$ is nonzero due to $\phi$ being Schwartz and $\hat{K}_{\simeq 1}$ having compact support, and it is finite since $\hat{\phi}$ is analytic (recall $\phi$ has compact support), and so in particular it cannot vanish on the support of $\hat{K}_{\simeq 1}$. In addition, the support of $\hat{K}_{\simeq 1}$ forces every other term to be strictly positive.
\end{remark}

\begin{remark}
    Note that from Lemma \ref{lem:geom} and \ref{w:5} of Lemma \ref{lem:boldW} we have
    $$
        w_{q+1,k} + w_{q+1,-k} = \nabla^{\perp}\left(\frac{2}{\pi \sigma_{q+1}}\mathbb{P}_{\leq \lambda_{q+1}}\left(a_k(R_q(x)) \rho_{q+1}^{k}(x)\right) \cos\left(4\pi \sigma_{q+1}k^\perp \cdot x\right) \cos\left(2\pi \sigma_{q+1}k \cdot x\right)\right)
    $$
    and so in particular $w_{q+1}$ takes values in $\R^2$.
\end{remark}
\begin{remark}
    In view of Definition \ref{def:projs}, ~\eqref{eq:fourier_series}, and ~\eqref{eq:wq+1}, we see that we must have
    \begin{equation*}
        \lambda_{q+1} > 8^{\frac{1}{1-\epsilon}}
    \end{equation*}
    in order to ensure that $w_{q+1}$ contains highly oscillatory terms. Hence we take
    \begin{equation}\label{est:lambda_ineq}
        \lambda_{q+1} > 2^{\frac{100}{1-\epsilon}}.
    \end{equation}
\end{remark}
\begin{lemma}[\textbf{$L^p$ bounds for $w_{q+1}$}]
    Let $w_{q+1}$ be as in Definition \ref{def:wq+1}. Then for $1 \leq p \leq \infty$ we have
    \begin{equation}\label{eq:wq+1_est}
        \Vert w_{q+1} \Vert_{L^p} \lesssim \lambda_{q+1}^{-1/2} \lambda_{q+1}^{\left(1 - \epsilon\right)\left(1-\frac{2}{p}\right)}.
    \end{equation}
\end{lemma}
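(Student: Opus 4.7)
Applying the product rule to the outer $\nabla^{\perp}$ in \eqref{eq:wq+1} splits $w_{q+1}$ into two kinds of contributions: (i) a \emph{principal} term where the derivative lands on the oscillation $e^{2\pi i \sigma_{q+1} k \cdot x}$, producing a factor $2\pi i \sigma_{q+1} k^{\perp}$ which cancels the $\tfrac{1}{2\pi i \sigma_{q+1}}$ prefactor; and (ii) a \emph{corrector} term where the derivative lands on the projected amplitude $\mathbb{P}_{\leq \lambda_{q+1}}(a_k(R_q)\rho_{q+1}^{k^{\perp}})$. The two ingredients needed are Lemma \ref{lem:proj}, which allows me to discard the projection in any $L^p$ norm with a constant independent of $\lambda_{q+1}$, and item \ref{w:5} of Lemma \ref{lem:boldW}, which yields
\begin{equation*}
\|\rho_{q+1}^{k^{\perp}}\|_{L^p(\T^2)} \lesssim \lambda_{q+1}^{(1/2)(1/2 - 1/p)}, \qquad \|\nabla \rho_{q+1}^{k^{\perp}}\|_{L^p(\T^2)} \lesssim \lambda_{q+1}^{(1/2)(1/2 - 1/p) + 1}.
\end{equation*}
Since $R_q$ is fixed and smooth and $\lambda_{q+1}$ obeys \eqref{est:lambda_ineq}, the amplitude $a_k(R_q) = \Gamma_k(I + CR_q/\lambda_{q+1})$ satisfies $\|a_k(R_q)\|_{C^1(\T^2)} \lesssim 1$ uniformly in $\lambda_{q+1}$.

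For the principal term, after the cancellation noted above, H\"older's inequality and Lemma \ref{lem:proj} give
\begin{equation*}
\left\|\sum_{k \in \Omega} \mathbb{P}_{\leq \lambda_{q+1}}\!\left(a_k(R_q)\rho_{q+1}^{k^{\perp}}\right) e^{2\pi i \sigma_{q+1} k \cdot x}\, k^{\perp}\right\|_{L^p} \lesssim \sum_{k \in \Omega} \|a_k(R_q)\|_{L^{\infty}} \|\rho_{q+1}^{k^{\perp}}\|_{L^p} \lesssim \lambda_{q+1}^{(1/2)(1/2 - 1/p)}.
\end{equation*}
For the corrector term, I commute $\nabla^{\perp}$ past the Fourier multiplier $\mathbb{P}_{\leq \lambda_{q+1}}$ and again invoke Lemma \ref{lem:proj} together with the product rule to bound its $L^p$ norm by
\begin{equation*}
\frac{1}{2\pi \sigma_{q+1}} \sum_{k \in \Omega}\left(\|\nabla a_k(R_q)\|_{L^{\infty}} \|\rho_{q+1}^{k^{\perp}}\|_{L^p} + \|a_k(R_q)\|_{L^{\infty}} \|\nabla \rho_{q+1}^{k^{\perp}}\|_{L^p}\right),
\end{equation*}
which, using $\sigma_{q+1} \sim \lambda_{q+1}$ and the estimates above, is again $\lesssim \lambda_{q+1}^{(1/2)(1/2 - 1/p)}$: the first summand is a factor $\lambda_{q+1}^{-1}$ smaller than the second.

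There is no genuine obstacle in this proof. The only structural remark worth making is that the normalization $\tfrac{1}{2\pi i \sigma_{q+1}}$ in \eqref{eq:wq+1}, together with the scaling $\sigma_{q+1} \sim \lambda_{q+1}$, is engineered precisely so that the two contributions are of the same order, both saturating the Mikado $L^p$ scaling of Lemma \ref{lem:boldW}.
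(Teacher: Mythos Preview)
Your proof is correct and follows essentially the same approach as the paper: both split $w_{q+1}$ via the product rule into a principal part (derivative on the exponential, your term (i), the paper's $w_{q+1,k}^{(2)}$) and a corrector (derivative on the projected amplitude, your term (ii), the paper's $w_{q+1,k}^{(1)}$), and then invoke Lemma~\ref{lem:proj} to discard $\mathbb{P}_{\leq \lambda_{q+1}}$ together with item~\ref{w:5} of Lemma~\ref{lem:boldW} and the uniform $C^1$ bound on $a_k(R_q)$.
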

\begin{proof}
    Clearly $\Vert w_{q+1} \Vert_{L^p} \lesssim \sum_{k \in \Omega} \Vert w_{q+1,k} \Vert_{L^p}$ for every $k \in \Omega$, so it suffices to obtain the desired $L^p$ bounds for each $w_{q+1,k}$. So we write
    \begin{equation*}
        \begin{split}
            w_{q+1,k} & = \frac{1}{2\pi i \sigma_{q+1}} \mathbb{P}_{\leq \sigma_{q+1}}\left(\nabla^{\perp}\left(a_k(R_q) \rho_{q+1}^{k}\right)\right) \left(e^{2\pi i \sigma_{q+1} (k + 2k^\perp) \cdot x} + e^{2\pi i \sigma_{q+1} (k - 2k^\perp) \cdot x}\right) \\
                      & + \mathbb{P}_{\leq \lambda_{q+1}}\left(a_k(R_q) \rho_{q+1}^{k}\right) e^{2\pi i \sigma_{q+1} (k + 2k^\perp) \cdot x}(2k^\perp - 2k)                                                                                                        \\
                      & - \mathbb{P}_{\leq \lambda_{q+1}}\left(a_k(R_q) \rho_{q+1}^{k}\right) e^{2\pi i \sigma_{q+1} (k - 2k^\perp) \cdot x}(2k^\perp + 2k)                                                                                                        \\
                      & := w_{q+1,k}^{(1)} +  w_{q+1,k}^{(2)} + w_{q+1,k}^{(3)}.
        \end{split}
    \end{equation*}
    and then using Lemma \ref{lem:proj}, \ref{w:5} of Lemma \ref{lem:boldW}, and ~\eqref{eq:ak} we estimate
    \begin{equation}\label{eq:w^1_est}
        \begin{split}
            \Vert w_{q+1,k}^{(1)} \Vert_{L^p} & \lesssim \lambda_{q+1}^{-1} \left(\Vert \nabla^{\perp}a_k(R_q) \Vert_{L^{\infty}} \Vert \rho_{q+1}^{k} \Vert_{L^p} + \Vert a_k(R_q) \Vert_{L^{\infty}} \Vert \nabla^{\perp}\rho_{q+1}^{k} \Vert_{L^p}\right)            \\
                                              & \lesssim \lambda_{q+1}^{-1}\left(\lambda_{q+1}^{-1/2}\lambda_{q+1}^{\left(1-\epsilon\right)\left(1-\frac{2}{p}\right)} + \lambda_{q+1}^{-1/2}\lambda_{q+1}^{\left(1-\epsilon\right)\left(1-\frac{2}{p}\right)+1}\right) \\
                                              & \lesssim \lambda_{q+1}^{-1/2}\lambda_{q+1}^{\left(1-\epsilon\right)\left(1-\frac{2}{p}\right)}
        \end{split}
    \end{equation}
    and
    \begin{equation}\label{eq:w^2_est}
        \Vert w_{q+1,k}^{(2)} \Vert_{L^p} + \Vert w_{q+1,k}^{(3)} \Vert_{L^p} \lesssim \Vert a_k(R_q) \Vert_{L^{\infty}} \Vert \rho_{q+1}^k \Vert_{L^{p}} \lesssim \lambda_{q+1}^{-1/2}\lambda_{q+1}^{\left(1-\epsilon\right)\left(1-\frac{2}{p}\right)}.
    \end{equation}
    Combining ~\eqref{eq:w^1_est} and ~\eqref{eq:w^2_est} gives ~\eqref{eq:wq+1_est}.
\end{proof}

\subsection{Proof of Item \ref{i:2}}
Recall we set
\begin{equation}\label{eq:vq}
    v_{q+1} = v_q + w_{q+1}
\end{equation}
and
\begin{equation}\label{eq:uq}
    u_{q+1} = u_q + \Lambda w_{q+1}.
\end{equation}
We assume that $v_q$ and $u_q$ have mean-zero. From ~\eqref{eq:wq+1} we see that $w_{q+1}$ also has mean-zero. Clearly $\Lambda w_{q+1}$ has mean-zero, and since the sum of mean-zero functions still has mean-zero, we conclude that $v_{q+1}$ and $u_{q+1}$ have mean-zero. By the same argument, we conclude that $v_{q+1}$ and $u_{q+1}$ are divergence free.

\subsection{Proof of Item \ref{i:1}}
Assume that $(u_q,v_q,p_q,R_q)$ satisfy ~\eqref{eq:relax_Reynolds}, and we define the pressure $p_{q+1}$ by
\begin{equation}\label{eq:pq+1}
    \begin{split}
        p_{q+1} & = p_q - \frac{1}{2} \sum_{k \in \Omega}\left(\Lambda^{-1}(\nabla^{\perp} \cdot w_{q+1,k}) \nabla^{\perp} \cdot w_{q+1,-k} - \tilde{C}\lambda_{q+1} a^2_k(R_q)\right) \\
                & =: p_q + \tilde{p}_{q+1}.
    \end{split}
\end{equation}
where $\tilde{C}$ is given by ~\eqref{eq:const_2}
and $R_{q+1}$ to be a $2$ by $2$ symmetric matrix satisfying
\begin{equation}\label{eq:Rq+1}
    \begin{split}
        \operatorname{div}(R_{q+1}) = & \operatorname{div}(R_q) + u_q \cdot \nabla w_{q+1} + \Lambda w_{q+1} \cdot \nabla w_{q+1} + \Lambda w_{q+1} \cdot \nabla v_q \\
                                      & - (\nabla w_{q+1})^T \cdot u_q - (\nabla w_{q+1})^T \cdot \Lambda w_{q+1} - (\nabla v_q)^T \cdot \Lambda w_{q+1}             \\
                                      & + \Lambda^{\gamma} w_{q+1} + \nabla \tilde{p}_{q+1}.
    \end{split}
\end{equation}
For completeness we check that the right hand side of ~\eqref{eq:Rq+1} has zero mean. First notice that with the exception of the two terms $(\nabla w_{q+1})^T \cdot u_q + (\nabla v_q)^T \cdot \Lambda w_{q+1}$ and $(\nabla w_{q+1})^T \cdot \Lambda w_{q+1}$, all other terms trivially have zero mean since $v_q$, $u_q$, and $w_{q+1}$ are divergence free due to \ref{i:2} and then utilizing integration by parts. We now handle the two terms above as follows. For the first one using integration by parts and the fact that $\Lambda$ is self adjoint we have
\begin{equation*}
    \int_{\T^2} (\nabla w_{q+1})^T u_q = \int_{\T^2} \partial_j w^i_{q+1} u^i_q = - \int_{\T^2} w^i_{q+1} \partial_j u_q^i = -\int_{\T^2} \Lambda w^i_{q+1} \partial_j v^i_q = - \int_{\T^2} (\nabla v_q)^T \Lambda w_{q+1}
\end{equation*}
so
$$
    \int_{\T^2} (\nabla w_{q+1})^T u_q + \int_{\T^2} (\nabla v_q)^T \cdot \Lambda w_{q+1} = 0.
$$
For the second term we proceed similarly to get
\begin{equation*}
    \int_{\T^2} \partial_i w^j_{q+1}  \Lambda w^j_{q+1}
    = -\int_{\T^2} w^j_{q+1} \partial_i(\Lambda w^j_{q+1}) = -\int_{\T^2} w^j_{q+1} \Lambda(\partial_i w^j_{q+1}) = -\int_{\T^2} \partial_i w^j_{q+1} \Lambda w^j_{q+1}=0.
\end{equation*}
Since the right hand side of ~\eqref{eq:Rq+1} is mean-zero, such a matrix exists, and it is smooth. For our convenience, we define $R_{q+1}$ to be the matrix one obtains upon applying the inverse divergence $\mathcal{R}$ from Definition \ref{def:R} to the right hand side of ~\eqref{eq:Rq+1}. Then it is easy to check that if we define $(u_{q+1},v_{q+1},p_{q+1},R_{q+1})$ as in ~\eqref{eq:vq}, ~\eqref{eq:uq}, ~\eqref{eq:pq+1}, and ~\eqref{eq:Rq+1} then they satisfy ~\eqref{eq:relax_Reynolds} and are smooth.

\subsection{Proof of Item \ref{i:3}}\label{section_oscillation_error}
We rearrange ~\eqref{eq:Rq+1} to get
\begin{equation*}
    \begin{split}
        \operatorname{div}(R_{q+1}) = & \operatorname{div}(R_q) + \Lambda w_{q+1} \cdot \nabla w_{q+1} - ( \nabla w_{q+1})^T \cdot \Lambda w_{q+1} + \nabla \tilde{p}_{q+1}  \\
                                      & + \Lambda w_{q+1} \cdot \nabla v_q - (\nabla v_q)^T \cdot \Lambda w_{q+1} + u_q \cdot \nabla w_{q+1} - ( \nabla w_{q+1})^T \cdot u_q \\
                                      & + \Lambda^{\gamma} w_{q+1}.
    \end{split}
\end{equation*}
Let $\mathcal{R}$ be as in Definition \ref{def:R} and define
\begin{equation*}
    \begin{split}
        R_O & = R_q + \mathcal{R}(\Lambda w_{q+1} \cdot \nabla w_{q+1} - (\nabla w_{q+1})^T \cdot \Lambda w_{q+1} + \nabla \tilde{p}_{q+1}),                    \\
        R_N & = \mathcal{R}(\Lambda w_{q+1} \cdot \nabla v_q - (\nabla v_q)^T \cdot \Lambda w_{q+1} + u_q \cdot \nabla w_{q+1} - (\nabla w_{q+1})^T \cdot u_q), \\
        R_D & = \mathcal{R}(\Lambda^{\gamma} w_{q+1}).
    \end{split}
\end{equation*}
So then we have
$$
    R_{q+1} = R_O + R_N + R_D
$$
where $R_O$, $R_N$, and $R_D$ stand for the oscillation error, Nash error, and dissipation error respectively. We assume that $\Vert R_q \Vert_{\dot{H}^{-4}} < 2^{-q}$. We aim to show that $\Vert R_{q+1} \Vert_{\dot{H}^{-4}} < 2^{-q-1}$.

\noindent\texttt{Dissipation error: } We have
$$
    \Vert R_D \Vert_{\dot{H}^{-4}}^2 \lesssim \Vert \Lambda^{\gamma} w_{q+1} \Vert_{\dot{H}^{-5}}^2 = \sum_{j \not =0} |j|^{-10} |j|^{2\gamma} |\hat{w}_{q+1}(j)|^2 \leq \Vert w_{q+1} \Vert_{L^1}^2 \sum_{j \not =0} |j|^{2\gamma - 10}
$$
Since $\gamma \leq 2$ the sum is finite, so applying ~\eqref{est:lambda_ineq} and choosing $\lambda_{q+1}$ large enough we obtain
\begin{equation}\label{eq:dis_est}
    \Vert R_D \Vert_{\dot{H}^{-4}} \lesssim \lambda_{q+1}^{-1/2}\lambda_{q+1}^{(\epsilon-1)/2} < 2^{-2q-100}.
\end{equation}

\noindent\texttt{Nash error: } For $R_N$, we estimate each term separately. Now we choose $\lambda_{q+1}$ large enough such that
\begin{equation}\label{eq:spt_don_1}
    \operatorname{supp}\left(\left(\Lambda w_{q+1} \cdot \nabla v_q\right)^{\wedge}\right), \operatorname{supp}\left(\left((\nabla v_q)^T \cdot \Lambda w_{q+1}\right)^{\wedge}\right) \subset \left\{\xi : |\xi| > \frac{1}{2}\lambda_{q+1}\right\}
\end{equation}
and
\begin{equation}\label{eq:spt_cond_2}
    \operatorname{supp}\left(\left(u_q \cdot \nabla w_{q+1}\right)^{\wedge}\right), \operatorname{supp}\left(\left((\nabla w_{q+1})^T \cdot u_q\right)^{\wedge}\right) \subset \left\{\xi : |\xi| > \frac{1}{2}\lambda_{q+1}\right\}.
\end{equation}
So we have
\begin{equation}\label{eq:Nash_error_est1}
    \begin{split}
        \Vert \mathcal{R}(\Lambda w_{q+1} \cdot \nabla v_q) \Vert_{\dot{H}^{-4}}^2 & \lesssim \Vert \Lambda w_{q+1} \cdot \nabla v_q \Vert_{\dot{H}^{-5}}^2                                                 \\
                                                                                   & = \sum_{|j| \gtrsim \lambda_{q+1}} |j|^{-10} \left| \left(\Lambda w_{q+1} \cdot \nabla v_q\right)^{\wedge}(j)\right|^2 \\
                                                                                   & \lesssim \lambda_{q+1}^{-8} \left\Vert \Lambda w_{q+1}\right\Vert_{L^2}^2 \left\Vert\nabla v_q \right\Vert_{L^2}^2     \\
                                                                                   & \lesssim \lambda_{q+1}^{-8} \left(\lambda_{q+1}^{-1/2}\lambda_{q+1}\right)^2 \left(1\right)^2                          \\
                                                                                   & = \lambda_{q+1}^{-7}.
    \end{split}
\end{equation}
Notice in the above computation we bounded $\left\Vert\nabla v_q \right\Vert_{L^2}$ by $1$. This is due to the fact that $v_q$ is independent of the frequency $\lambda_{q+1}$. This is a fact that we will exploit multiple times throughout the rest of the paper. $\mathcal{R}((\nabla v_q)^T \cdot \Lambda w_{q+1})$, $\mathcal{R}(u_q \cdot \nabla w_{q+1})$, and $\mathcal{R}((\nabla w_{q+1})^T \cdot u_q)$ can all be handled in exactly the same manner. Hence from ~\eqref{eq:Nash_error_est1} (as well as the fact there are three more terms which obey identical bounds) we may choose $\lambda_{q+1}$ large enough such that
\begin{equation}\label{eq:Nash_error_est}
    \Vert R_N \Vert_{\dot{H}^{-4}} \lesssim \lambda_{q+1}^{-7/2} < 2^{-2q-100}.
\end{equation}

\noindent\texttt{Oscillation error: } This section is the most technical part of the paper, so we start by giving a brief overview. To obtain the desired estimate of $R_O$, we will follow the procedure introduced in \cite{BSV}, that is, we split $R_O$ into a high frequency component and a low frequency component. For the low frequency component, we will exploit the structure of the relaxed SQG momentum equation to decompose the divergence of this term as the sum of the divergence of a tensor and the gradient of a scalar valued function. The scalar valued function we regard as a pressure term and remove it using our definition of $\tilde{p}_{q+1}$. We then perform a further decomposition of the tensor product term into high and low frequency terms. From these low frequency terms, we are able to cancel the Reynolds stress $R_q$ and the remaining component of $\tilde{p}_{q+1}$. The remaining high frequency error terms from the tensor product term can be argued to be arbitrarily small in $\dot{H}^{-4}$ norm. For the high frequency component, utilizing a geometric argument we are able to deduce that these terms can be made negligible in the $\dot{H}^{-4}$ norm.

\noindent
Following \cite{BSV}, throughout this section we will shamelessly abuse notation and conflate the Fourier series and Fourier transform by identifying functions defined on $\T^2$ with their periodic extensions to $\R^2$. This will aid in the ease of the computations, but in practice there is no real harm; see \cite{CZ54} for a general transference principle.

\noindent We start with the low frequency term. This term corresponds precisely with the case when the frequency support of $\Lambda w_{q+1,k} \cdot \nabla w_{q+1,k'}$ and $(\nabla w_{q+1,k'})^T \cdot w_{q+1,k}$ is near the origin. Thus the sum over $k + k' = 0$ of such terms will form the low frequency component, and the sum over $k + k' \not = 0$ will form the high frequency component. Hence for $k \in \Omega$ set
\begin{equation}\label{eq:low_freq_term}
    \begin{split}
        \mathcal{T}_{q+1,k} = & \frac{1}{2}\bigg(\Lambda w_{q+1,k} \cdot \nabla w_{q+1,-k} - (\nabla w_{q+1,k})^T \cdot \Lambda w_{q+1,-k} \\
                              & + \Lambda w_{q+1,-k} \cdot \nabla w_{q+1,k} - (\nabla w_{q+1,-k})^T \cdot \Lambda w_{q+1,k}\bigg)
    \end{split}
\end{equation}
and recall from ~\eqref{eq:pq+1} that
\begin{equation*}
        \tilde{p}_{q+1} = - \frac{1}{2} \sum_{k \in \Omega} \Lambda^{-1}(\nabla^{\perp} \cdot w_{q+1,k}) \nabla^{\perp} \cdot w_{q+1,-k} - \frac{\tilde{C}\lambda_{q+1}}{2}\sum_{k \in \Omega} a^2_k(R_q)
\end{equation*}
where $\tilde{C}$ will be defined later in ~\eqref{eq:asym_const_2}. Now, denote the above by 
\begin{equation}\label{eq:pressure_decomp}
    \tilde{p}_{q+1} =: \sum_{k \in \Omega} p_{q+1,k,1} + \sum_{k \in \Omega} p_{q+1,k,2}.
\end{equation}
Our goal is to estimate the $\dot{H}^{-4}$ norm of $\mathcal{T}_{q+1,k} + p_{q+1,k,1} + p_{q+1,k,2}$. To obtain the desired decomposition of ~\eqref{eq:low_freq_term}, we follow closely the methodology of \cite[~pp.~1844-1851]{BSV}. Put $\vartheta_{q+1,k} = \nabla^{\perp} \cdot w_{q+1,k}$ so that using \cite[~Eq 5.20]{BSV} we have
$$
    \mathcal{T}_{q+1,k} = \frac{1}{2}\left( (R\vartheta_{q+1,k})\vartheta_{q+1,-k} + \vartheta_{q+1,k}(R\vartheta_{q+1,-k})\right).
$$
where $R = \Lambda^{-1} \nabla$ denotes the vector Riesz transform. Then, from \cite[~Equation 5.29]{BSV}, we have
\begin{equation}\label{eq:Tq+1k}
    \mathcal{T}_{q+1,k} = \frac{1}{2}\nabla\left(\Lambda^{-1} \vartheta_{q+1,k}\vartheta_{q+1,-k}\right) + \frac{1}{2}\operatorname{div}\left(S(\Lambda^{-1} \vartheta_{q+1,k}, R\vartheta_{q+1,-k})\right)
\end{equation}
where
\begin{equation}\label{eq:S^m}
    S^m(f,g) = \int_{\R^2} \int_{\R^2} s^m(\xi,\eta) \hat{f}(\xi) \hat{g}(\eta) e^{2\pi ix \cdot(\xi + \eta)}\, d\xi\, d\eta,
\end{equation}
and
\begin{equation}\label{eq:s^m}
    s^m(\xi,\eta) = \int_0^1 \frac{i((1-r) \eta - r\xi)^m}{|(1-r) \eta - r\xi|}\, dr.
\end{equation}
Clearly the first term in ~\eqref{eq:Tq+1k} is canceled by the first term in ~\eqref{eq:pressure_decomp}, so it suffices to bound
\begin{equation*}
    \frac{1}{2}S^m(\Lambda^{-1} \vartheta_{q+1,k}, R^{\ell}\vartheta_{q+1,-k}) - p_{q+1,k,2}
\end{equation*}
for $1 \leq m,\ell \leq 2$ in $\dot{H}^{-4}$ norm.

\noindent
For $k \in \Omega$ let
$$
    w_{q+1,k}^{\pm} = \nabla^\perp\left(\frac{1}{2\pi  \sigma_{q+1}} \mathbb{P}_{\leq \lambda_{q+1}}\left(a_k(R_q(x)) \rho_{q+1}^k(x)\right) e^{2\pi i \sigma_{q+1} (k \pm 2k^\perp) \cdot x} \right).
$$
Clearly $w_{q+1,k} = w_{q+1,k}^+ + w_{q+1,k}^-$. We also let
$$
    \vartheta_{q+1,k}^{\pm} = \nabla^\perp \cdot w_{q+1,k}^\pm.
$$
Since $S^m$ is a bilinear operator, we have the expansion
\begin{equation}\label{eq:S^m_decomp}
    \begin{split}
        S^m(\Lambda^{-1} \vartheta_{q+1,k}, R^{\ell}\vartheta_{q+1,-k}) & = S^m(\Lambda^{-1} \vartheta_{q+1,k}^+, R^{\ell}\vartheta_{q+1,-k}^+) + S^m(\Lambda^{-1} \vartheta_{q+1,k}^-, R^{\ell}\vartheta_{q+1,-k}^-) \\
                                                                        & + S^m(\Lambda^{-1} \vartheta_{q+1,k}^+, R^{\ell}\vartheta_{q+1,-k}^-) + S^m(\Lambda^{-1} \vartheta_{q+1,k}^-, R^{\ell}\vartheta_{q+1,-k}^+).
    \end{split}
\end{equation}
Heuristically, the top two terms on the right hand side of ~\eqref{eq:S^m_decomp} are of low frequency, and so we will utilize them to cancel the Reynolds stress. However, the two terms on the bottom of the right hand side of ~\eqref{eq:S^m_decomp} are of high frequency, and so we anticipate being able to make these terms arbitrarily small in $\dot{H}^{-4}$ norm.

\noindent We start by using the low frequency terms to cancel the Reynolds stress. Put
\begin{equation}\label{eq:Q^pm}
    \mathcal{Q}_{q+1,k}^{m,\ell,\pm} := \frac{1}{2}S^m(\Lambda^{-1} \vartheta^{\pm}_{q+1,k}, R^{\ell}\vartheta^{\pm}_{q+1,-k})
\end{equation}
and
\begin{equation*}
    \Phi_{q+1,k}^{\pm}(x)=\frac{1}{2\pi  \sigma_{q+1}} \mathbb{P}_{\leq \lambda_{q+1}}\left(a_k(R_q(x)) \rho_{q+1}^k(x)\right) e^{2\pi i \sigma_{q+1} (k \pm 2k^\perp) \cdot x}.
\end{equation*}
Now taking Fourier transforms using the multiplier of $\mathbb{P}_{\leq \lambda_{q+1}}$ and the frequency shift by $\sigma_{q+1} k$, we obtain

\begin{equation*}
    \widehat{\Phi}^{\pm}_{q+1,k}(\xi) = \frac{1}{2\pi  \sigma_{q+1}} \widehat{K}_{\simeq 1}\left(\frac{\xi - (k \pm 2k^\perp)\sigma_{q+1}}{\lambda_{q+1}}\right) (a_k(R_q) \rho_{q+1}^k)^{\wedge}(\xi - (k \pm 2k^\perp)\sigma_{q+1})
\end{equation*}
and replacing $k$ by $-k$ gives
\begin{equation*}
    \widehat{\Phi}^{\pm}_{q+1,-k}(\eta)= \frac{1}{2\pi  \sigma_{q+1}} \widehat{K}_{\simeq 1}\left(\frac{\eta  + (k \pm 2k^\perp)\sigma_{q+1}}{\lambda_{q+1}}\right) (a_k(R_q) \rho_{q+1}^k)^{\wedge}(\eta + (k\pm 2k^{\perp})\sigma_{q+1}).
\end{equation*}
Notice we have
\begin{equation*}
    \vartheta_{q+1,k}= \nabla^{\perp} \cdot \nabla^{\perp}\Phi_{q+1,k}(x) = \Delta \Phi_{q+1,k}(x)= -\Lambda^{2}\Phi_{q+1,k}(x).
\end{equation*}
Therefore, $\Lambda^{-1} \vartheta_{q+1,k} = -\Lambda \Phi_{q+1,k}$. Recalling $R^{\ell} = \Lambda^{-1} \partial_{\ell}$, by direct computation we get
\begin{equation}\label{eq:Rltheta}
    \begin{split}
        \widehat{R^{\ell} \vartheta^{\pm}_{q+1,-k}}(\eta) & = -\widehat{\Lambda \partial_{\ell} \Phi^{\pm}_{q+1,-k}}                                                                                                                                                             \\
                                                          & = - \frac{2\pi i |\eta| \eta^{\ell}}{\sigma_{q+1}} \hat{K}_{\simeq 1} \left(\frac{\eta +(k\pm2k^{\perp})\sigma_{q+1}}{\lambda_{q+1}} \right) (a_k(R_q) \rho^k_{q+1})^{\wedge} (\eta + (k\pm 2k^{\perp})\sigma_{q+1})
    \end{split}
\end{equation}
and
\begin{equation}\label{eq:Lambda^{-1}theta}
    \begin{split}
        \widehat{\Lambda^{-1} \vartheta^{\pm}_{q+1,k}}(\xi) & = -\widehat{\Lambda^{-1} \Phi^{\pm}_{q+1,k}}                                                                                                                                                \\
                                                            & = -\frac{|\xi|}{\sigma_{q+1}}\hat{K}_{\simeq 1} \left(\frac{\xi -(k\pm2k^{\perp})\sigma_{q+1}}{\lambda_{q+1}} \right) (a_k(R_q) \rho^k_{q+1})^{\wedge} (\xi -(k\pm 2k^{\perp})\sigma_{q+1}).
    \end{split}
\end{equation}
Then combining ~\eqref{eq:S^m}, ~\eqref{eq:s^m}, ~\eqref{eq:Q^pm}, ~\eqref{eq:Rltheta},  and ~\eqref{eq:Lambda^{-1}theta} we obtain

\begin{equation*}
    \begin{split}
        \mathcal{Q}_{q+1,k}^{m,\ell,\pm} & = \frac{1}{2} \int_{\R^2} \int_{\R^2} M_{q+1,k}^{m,\ell,\pm}(\xi,\eta) (a_k(R_q) \rho_{q+1}^{k})^{\wedge}(\eta)(a_k(R_q) \rho_{q+1}^{k})^{\wedge}(\xi) e^{2\pi ix \cdot (\eta + \xi)}\, d\xi\, d\eta
    \end{split}
\end{equation*}
where
\begin{equation}\label{eq:Mq+1kr}
    \begin{split}
        M^{m,\ell,\pm}_{q+1,k,r}(\xi,\eta) & = -\frac{2\pi}{\sigma^2_{q+1}} \frac{((1-r)\eta-r\xi-(k\pm2k^{\perp})\sigma_{q+1})^m}{|(1-r)\eta-r\xi-(k\pm2k^{\perp})\sigma_{q+1}|}\\
        &\times (\eta -(k\pm 2k^{\perp}\sigma_{q+1})^{\ell}
        |\xi + (k\pm 2k^{\perp})\sigma_{q+1}|\\
        &\times |\eta-(k\pm 2k^{\perp})\sigma_{q+1}|  \hat{K}_{\simeq1}\left(\frac{\eta}{\lambda_{q+1}}\right)\hat{K}_{\simeq1}\left(\frac{\xi}{\lambda_{q+1}}\right)
    \end{split}
\end{equation}
and
\begin{equation}\label{eq:Mq+1k}
    M_{q+1,k}^{m,\ell,\pm}(\xi,\eta) = \int_0^1 M_{q+1,k,r}^{m,\ell,\pm}(\xi,\eta)\, dr.
\end{equation}
Let us put
\begin{equation}\label{eq:M^*}
    \begin{split}
        M^{m,\ell,\pm*}_{k,r}(\xi,\eta) & = -2\pi \frac{((1-r)\eta - r\xi- (k \pm 2k^\perp))^m}{|(1-r)\eta - r\xi- (k \pm 2k^\perp)|}\\
        &\times(\eta^{\ell} - (k \pm 2k^\perp)^{\ell}) |\xi + (k \pm 2k^\perp)|\\
        &\times |\eta - (k \pm 2k^\perp)| \hat{K}_{\simeq 1}\left(\frac{5}{8}\xi\right) \hat{K}_{\simeq 1}\left(\frac{5}{8}\eta\right)
    \end{split}
\end{equation}
and note that
$$
    M_{q+1,k,r}^{m,\ell,\pm}(\xi,\eta) = \sigma_{q+1} M^{m,\ell,\pm*}_{k,r}\left(\frac{\xi}{\sigma_{q+1}},\frac{\eta}{\sigma_{q+1}}\right).
$$
Following the same reasoning as in \cite{BSV}, ~\eqref{eq:M^*} shows that $M^{m,\ell,\pm*}_{k,r}$ is independent of both $\sigma_{q+1}$ and $\lambda_{q+1}$, and is supported on $(\xi,\eta) \in B(0,1/5) \times B(0,1/5)$ in view of Definition \ref{def:projs}. Thus from geometric considerations we have
$$
    |\xi+(k \pm 2k^\perp)|,|\eta-(k \pm 2k^\perp)| \geq \sqrt{5} - \frac{1}{5} \geq 1
$$
\noindent
and

$$|(1-r)\eta - r\xi - (k \pm 2k^\perp)| \geq \sqrt{5} - \frac{2}{5} \geq 1.
$$

\noindent
Therefore, $M^{m,\ell,\pm*}_{k,r}$ is smooth, and can be bounded independently of $r \in (0,1)$.\\
Now, denote
\begin{equation}\label{eq:bilin_conv}
    \begin{split} \mathcal{Q}_{q+1,k}^{m,\ell,\pm} & = \frac{1}{2}  \int_0^1 \int_{\R^2} \int_{\R^2} K^{m,\ell,\pm}_{q+1,k,r}(x-y,x-z) (a_k(R_q) \rho_{q+1}^{k})(y)(a_k(R_q) \rho_{q+1}^{k})(z)\, dy\,dz\,dr
    \end{split}
\end{equation}
where
\begin{equation}\label{eq:kernel}
    K^{m,\ell,\pm}_{q+1,k,r}(y,z) = \sigma_{q+1}^5 \left( M^{m,\ell,\pm*}_{k,r}\right)^{\vee}\left(\sigma_{q+1}y,\sigma_{q+1}z\right).
\end{equation}
The explicit form of ~\eqref{eq:kernel}  will not play an important role for us; what is important is that
\begin{equation}\label{eq:kernel_est}
    \left\Vert y^{a} z^{b} K^{m,\ell,\pm}_{q+1,k,r} \right\Vert_{L^1_{y,z}} \lesssim \sigma_{q+1}^{1 - |a| - |b|}
\end{equation}
for multi-indices $a$ and $b$ with $|a| + |b| \in \{0,1,2\}$. This follows simply from change of variables in the integral and the fact $M^{m,\ell,\pm*}_{k,r}$ is Schwartz and independent of the $\sigma_{q+1}$ parameter. See \cite[Equation 5.46]{BSV}. From ~\eqref{eq:bilin_conv}, upon performing a change of variables and decomposing the product of the intermittent blobs as the sum of their mean with their mean free component we obtain
\begin{equation}\label{eq:bilin_conv_decomp}
    \begin{split}
        \mathcal{Q}_{q+1,k}^{m,\ell,\pm} = & \frac{1}{2}  \int_0^1 \int_{\R^2} \int_{\R^2} K^{m,\ell,\pm}_{q+1,k,r}(y,z) \mathbb{P}_{=0}\left(\rho_{q+1}^{k}(x-y) \rho_{q+1}^{k}(x-z)\right)\\
        &\times a_k(x-y)a_k(x-z)\, dy\,dz\,dr \\
        +& \frac{1}{2}  \int_0^1 \int_{\R^2} \int_{\R^2} K^{m,\ell,\pm}_{q+1,k,r}(y,z) \mathbb{P}_{\not=0}\left(\rho_{q+1}^{k}(x-y) \rho_{q+1}^{k}(x-z)\right)\\
        &\times a_k(x-y)a_k(x-z)\, dy\,dz\,dr.
    \end{split}
\end{equation}
Notice we have suppressed the dependence of the functions $a_k$ on the matrix $R_q$. Let us refer to expression on the right hand side of the top line of ~\eqref{eq:bilin_conv_decomp} as $\mathcal{Q}_{q+1,k}^{m,\ell,\pm,1}$ and the term on the second line as $\mathcal{Q}_{q+1,k}^{m,\ell,\pm,2}$. Using Lemma \ref{lem:fourier} and the fact $\phi$ is radially symmetric, we have that
\begin{equation*}
    \begin{split}
        \rho_{q+1}^k(x-y) \rho_{q+1}^k(x-z) = & \lambda_{q+1}^{2(\epsilon-1)} \sum_{n,m \in \Z^2} \hat{\phi}(\lambda_{q+1}^{\epsilon -1} n) \hat{\phi}(\lambda_{q+1}^{\epsilon -1} m)             \\
                                              & \times  e^{2\pi i 5 \lambda_{q+1}^{\epsilon}((n_1 +m_1)k +(n_2 + m_2)k^{\perp})\cdot x}                                                           \\
                                              & \times e^{-2\pi i 5 \lambda_{q+1}^{\epsilon}(n_1 k + n_2 k^{\perp})\cdot y} e^{-2\pi i 5 \lambda_{q+1}^{\epsilon}(m_1 k + m_2 k^{\perp}) \cdot z}.
    \end{split}
\end{equation*}
Hence
\begin{equation}\label{eq:off_mean_prod}
    \begin{split}
        \mathbb{P}_{\not=0,x}\left(\rho^{k}(x-y) \rho^{k}(x-z)\right) = & \lambda_{q+1}^{2(\epsilon -1)} \sum_{\substack{n+m \in \Z^2                                                                                       \\ n+m \not= 0}} \hat{\phi}(\lambda_{q+1}^{\epsilon -1} n) \hat{\phi}(\lambda_{q+1}^{\epsilon -1}m) \\
                                                                        & \times e^{-2\pi i 5 \lambda_{q+1}^{\epsilon}(n_1 k + n_2 k^{\perp})\cdot y} e^{-2\pi i 5 \lambda_{q+1}^{\epsilon}(m_1 k + m_2 k^{\perp}) \cdot z} \\
                                                                        & \times  e^{2\pi i 5 \lambda_{q+1}^{\epsilon}((n_1 +m_1)k +(n_2 + m_2)k^{\perp})\cdot x}
    \end{split}
\end{equation}
and
\begin{equation}\label{eq:mean_prod}
    \mathbb{P}_{=0,x}\left(\rho^{k}(x-y) \rho^{k}(x-z)\right) = \lambda_{q+1}^{2(\epsilon -1)} \sum_{n\in \Z^2} \left|\hat{\phi}(\lambda_{q+1}^{\epsilon-1} n) \right|^2 e^{-2\pi i 5 \lambda_{q+1}^{\epsilon}(n_1 k + n_2 k^{\perp})\cdot (y-z)}.
\end{equation}
Note we use the subscript $x$ in ~\eqref{eq:off_mean_prod} and ~\eqref{eq:mean_prod} to indicate the mean is taken with respect to the $x$ variable. Now utilizing Lemma \ref{lem:pseudo_diff_cont}, ~\eqref{eq:off_mean_prod}, and
\begin{equation*}
    \begin{split}
        \left\Vert a_k(x-y) a_k(x-z) \right\Vert_{C^4_x} \lesssim \left\Vert a_k(x-y) \right\Vert_{C^4_x} \left\Vert a_k(x-z) \right\Vert_{C^4_x}  = \left\Vert a_k \right\Vert_{C^4}^2  \lesssim \left\Vert a_k \right\Vert_{L^{\infty}}^2 \lesssim \lambda_{q+1}^{-1}
    \end{split}
\end{equation*}
gives
\begin{equation*}
    \begin{split}
         \bigg\Vert \mathbb{P}_{\not=0}\bigg(\rho_{q+1}^{k}(x-y) &\rho_{q+1}^{k}(x-z)\bigg)a_k(x-y)a_k(x-z)\bigg\Vert_{\dot{H}^{-4}_x} \\
         &\lesssim \lambda_{q+1}^{-1} \left\Vert \mathbb{P}_{\not=0,x}\left(\rho^{k}(x-y) \rho^{k}(x-z)\right) \right\Vert_{\dot{H}^{-4}_x}.
    \end{split}
\end{equation*}
Hence
\begin{equation}\label{eq:Q2_est}
\begin{split}
    \left\Vert \mathcal{Q}_{q+1,k}^{m,\ell,\pm,2} \right\Vert_{\dot{H}^{-4}} &\lesssim \lambda_{q+1}^{-1} \int_0^1 \int_{\R^2} \int_{\R^2} \left|K^{m,\ell,\pm}_{q+1,k,r}(y,z)\right|\\
    &\times \left\Vert \mathbb{P}_{\not=0,x}\left(\rho^{k}(x-y) \rho^{k}(x-z)\right) \right\Vert_{\dot{H}^{-4}_x}\, dy\, dz\, dr.
    \end{split}
\end{equation}
Setting
$$
    \mathcal{M}_{n,m}(y,z) = e^{-2\pi i 5 \lambda_{q+1}^{\epsilon}(n_1 k + n_2 k^{\perp})\cdot y} e^{-2\pi i 5 \lambda_{q+1}^{\epsilon}(m_1 k + m_2 k^{\perp}) \cdot z},
$$
we compute
\begin{equation*}
    \begin{split}
          & \left\Vert \mathbb{P}_{\not=0,x}\left(\rho^{k}(x-y) \rho^{k}(x-z)\right) \right\Vert_{\dot{H}^{-4}_x}^2                                                                                                                                     \\
        = & \sum_{j \in \Z^2 \setminus \{0\}} |j|^{-8} \left|\left(\mathbb{P}_{\not=0,x}\left(\rho^{k}(x-y) \rho^{k}(x-z)\right)\right)^{\wedge}(j)\right|^2                                                                                            \\
        = & \sum_{j \in \Z^2 \setminus \{0\}} |j|^{-8} \left| \sum_{(n,m) \in E_j} \lambda_{q+1}^{2(\epsilon-1)} \hat{\phi}\left(\lambda_{q+1}^{\epsilon-1} n\right) \hat{\phi}\left(\lambda_{q+1}^{\epsilon-1}m\right) \mathcal{M}_{n,m}(y,z)\right|^2
    \end{split}
\end{equation*}
where
$$
    E_j = \{(n,m) \in \Z^2 \times \Z^2 : 5\lambda_{q+1}^\epsilon\left((n_1+m_1)k + (n_2 + m_2)k^{\perp}\right) = j\}.
$$
For $k \in \Omega$, put $\mathcal{O}_k = \begin{bmatrix}
        k & k^{\perp}
    \end{bmatrix}$. $\mathcal{O}_k$ is orthogonal, and we have $(n,m) \in E_j$ if and only if $5\lambda_{q+1}^\epsilon(n+m) = \mathcal{O}_k^{-1}j = \mathcal{O}_k^Tj$. Hence $j \in 25\lambda_{q+1}^\epsilon \Z^2 \setminus \{0\}$ and so we have
\begin{equation*}
\begin{split}   
      \bigg\Vert \mathbb{P}_{\not=0,x}\bigg(&\rho^{k}(x-y) \rho^{k}(x-z)\bigg) \bigg\Vert_{\dot{H}^{-4}_x}^2\\
      =& \lambda_{q+1}^{4\epsilon-4}\sum_{j \in \Z^2 \setminus \{0\}} |25\lambda_{q+1}^{\epsilon} j|^{-8} \left| \sum_{n+m=5\mathcal{O}^{-1}_kj}  \hat{\phi}\left(\lambda_{q+1}^{\epsilon-1} n\right) \hat{\phi}\left(\lambda_{q+1}^{\epsilon-1}m\right) \mathcal{M}_{n,m}(y,z)\right|^2 
       \end{split}
\end{equation*}

\noindent
and we proceed to bound
    
\begin{equation}\label{eq:off_mean_comps}
    \begin{split}
         \bigg\Vert \mathbb{P}_{\not=0,x}\bigg(&\rho^{k}(x-y) \rho^{k}(x-z)\bigg) \bigg\Vert_{\dot{H}^{-4}_x}^2   \\
        \lesssim & \lambda_{q+1}^{-4\epsilon-4} \sum_{j \in \Z^2 \setminus \{0\}}  |j|^{-8} \left( \sum_{n+m=5\mathcal{O}_k^{-1}j} \left|\hat{\phi}\left(\lambda_{q+1}^{\epsilon-1}n\right)\right| \left|\hat{\phi}\left(\lambda_{q+1}^{\epsilon-1}m\right)\right|\right)^2                        \\
        \lesssim & \lambda_{q+1}^{-4\epsilon-4} \sum_{j \in \Z^2 \setminus \{0\}}  |j|^{-8} \left\Vert \left|\hat{\phi}\left(\lambda_{q+1}^{\epsilon-1} n\right)\right| \ast \left|\hat{\phi}\left(\lambda_{q+1}^{\epsilon-1} n\right)\right| \right\Vert_{\ell^{\infty}_n}^2\\
        \lesssim & \lambda_{q+1}^{-4\epsilon-4} \sum_{j \in \Z^2 \setminus \{0\}}  |j|^{-8} \left\Vert \hat{\phi}\left(\lambda_{q+1}^{\epsilon-1}n\right)\right\Vert_{\ell^2_n}^4.
        \end{split}
\end{equation}
Note to obtain the final line we utilize Young's convolution inequality. So now we have
\begin{equation}\label{eq:young}
    \begin{split}
        \left\Vert \hat{\phi}(\lambda_{q+1}^{\epsilon-1} \cdot) \right\Vert_{\ell^2}^4 & \lesssim \left(\sum_{n \in \Z^2} \left|\hat{\phi}(\lambda_{q+1}^{\epsilon-1}n)\right|^2 \right)^2                                                     \\
        & = \lambda_{q+1}^{2(1 - \epsilon)} \left(\sum_{n \in \Z^2}  \left|\hat{\phi}(\lambda_{q+1}^{\epsilon-1}n)\right|^2\lambda_{q+1}^{2(\epsilon-1)} \right)^2.
    \end{split}
\end{equation}
From the integral test and standard results on the convergence of improper Riemann integrals we obtain
\begin{equation}\label{eq:integral_comp}
    \sum_{n \in \Z^2}  \left|\hat{\phi}(\lambda_{q+1}^{\epsilon-1}n)\right|^2\lambda_{q+1}^{2(\epsilon-1)} \simeq \Vert \hat{\phi} \Vert_{L^2}^2 \simeq 1.
\end{equation}
Hence from ~\eqref{eq:off_mean_comps}, ~\eqref{eq:young}, and ~\eqref{eq:integral_comp} we deduce
\begin{equation}\label{eq:off_mean_est}
    \left\Vert \mathbb{P}_{\not=0,x}\left(\rho^{k}(x-y) \rho^{k}(x-z)\right) \right\Vert_{\dot{H}^{-4}_x} \lesssim \lambda_{q+1}^{-4\epsilon}
\end{equation}
and so from ~\eqref{eq:kernel_est}, ~\eqref{eq:Q2_est}, and ~\eqref{eq:off_mean_est} we obtain
\begin{equation}\label{eq:Q^2_est}
\begin{split}
    \left\Vert \mathcal{Q}_{q+1,k}^{m,\ell,\pm,2} \right\Vert_{\dot{H}^{-4}} &\lesssim \lambda_{q+1}^{-1} \int_0^1 \int_{\R^2} \int_{\R^2} \left|K^{m,\ell,\pm}_{q+1,k,r}(y,z)\right|\\
    &\times \left\Vert \mathbb{P}_{\not=0,x}\left(\rho^{k}(x-y) \rho^{k}(x-z)\right) \right\Vert_{\dot{H}^{-4}}\, dy\, dz\, dr\\
    &\lesssim \lambda_{q+1}^{-1-4\epsilon} \int_0^1 \int_{\R^2} \int_{\R^2} \left|K^{m,\ell,\pm}_{q+1,k,r}(y,z)\right|\, dy\, dz\, dr \\
    &\lesssim \lambda_{q+1}^{-4\epsilon}\\
    &< 2^{-2q-100}.
    \end{split}
\end{equation}
for $\lambda_{q+1}$ large enough.\\
Now, we utilize $\mathcal{Q}_{q+1,k}^{m,\ell,\pm,1}$ to cancel the Reynolds stress $R_q$. In order to achieve this, we write
\begin{equation*}
    a_k(x-y) = a_k(x) - y \cdot \int_0^1 \nabla a_k(x - ty)\, dt
\end{equation*}
and
\begin{equation*}
    a_k(x-z) = a_k(x) - z \cdot \int_0^1 \nabla a_k(x - tz)\, dt
\end{equation*}
to get
\begin{equation*}
    \begin{split}
        a_k(x-y)a_k(x-z) & = a^2_k(x) - a_k(x) y \cdot \int_0^1 \nabla a_k(x - ty)\, dt\\
        & - a_k(x)z \cdot \int_0^1 \nabla a_k(x - tz)\, dt\\
        & + \left(y \cdot \int_0^1 \nabla a_k(x - ty)\, dt\right)\\
        &\times\left(z \cdot \int_0^1 \nabla a_k(x - tz)\, dt\right).
    \end{split}
\end{equation*}
Thus
\begin{equation}\label{eq:Q^1_decomp}
    \begin{split}
        \mathcal{Q}_{q+1,k}^{m,\ell,\pm,1} & = \frac{1}{2}\int_0^1 \int_{\R^2} \int_{\R^2} K^{m,\ell,\pm}_{q+1,k,r}(y,z) \mathbb{P}_{=0,x}\left(\rho_{q+1}^{k}(x-y) \rho_{q+1}^{k}(x-z)\right)a^2_k(x)\, dy\,dz\,dr \\
        & - \frac{1}{2}\int_0^1 \int_0^1 \int_{\R^2} \int_{\R^2} K^{m,\ell,\pm}_{q+1,k,r}(y,z) \mathbb{P}_{=0,x}\left(\rho_{q+1}^{k}(x-y) \rho_{q+1}^{k}(x-z)\right)\\
        & \times a_k(x) y \cdot \nabla a_k(x - ty)\, dy\,dz\,dr\, dt\\
        & - \frac{1}{2}\int_0^1 \int_0^1 \int_{\R^2} \int_{\R^2} K^{m,\ell,\pm}_{q+1,k,r}(y,z) \mathbb{P}_{=0,x}\left(\rho_{q+1}^{k}(x-y) \rho_{q+1}^{k}(x-z)\right)\\
        & \times a_k(x) z \cdot \nabla a_k(x - tz)\, dy\,dz\,dr\, dt\\
        & + \frac{1}{2}\int_0^1 \int_{\R^2} \int_{\R^2} K^{m,\ell,\pm}_{q+1,k,r}(y,z) \mathbb{P}_{=0,x}\left(\rho_{q+1}^{k}(x-y) \rho_{q+1}^{k}(x-z)\right)\\
        & \times \prod_{w \in \{y,z\}}  \left(w \cdot \int_0^1 \nabla a_k(x - tw)\, dt\right) dy\,dz\,dr.
    \end{split}
\end{equation}
We denote the four terms on the right hand side of ~\eqref{eq:Q^1_decomp} by $\mathcal{Q}_{q+1,k}^{m,\ell,\pm,1,1}$, $\mathcal{Q}_{q+1,k}^{m,\ell,\pm,1,2}$, $\mathcal{Q}_{q+1,k}^{m,\ell,\pm,1,3}$, and $\mathcal{Q}_{q+1,k}^{m,\ell,\pm,1,4}$ respectively. Starting with $\mathcal{Q}_{q+1,k}^{m,\ell,\pm,1,1}$, using ~\eqref{eq:mean_prod} and properties of the Fourier transform we obtain
\begin{equation}\label{eq:Q^11_initial}
    \begin{split}
        \mathcal{Q}_{q+1,k}^{m,\ell,\pm,1,1} & = \frac{a^2_k(x)}{2} \sum_{n \in \Z^2} \lambda_{q+1}^{2(\epsilon-1)} \left|\hat{\phi}\left(\lambda_{q+1}^{\epsilon-1}n\right)\right|^2\\
        &\times \int_0^1 \int_{\R^2} \int_{\R^2} K^{m,\ell,\pm}_{q+1,k,r}(y,z) e^{-2\pi i 5 \lambda_{q+1}^{\epsilon}\mathcal{O}_kn\cdot (y-z)}\, dy\, dz\, dr \\
        & = \frac{a^2_k(x)}{2} \sum_{n \in \Z^2} \lambda_{q+1}^{2(\epsilon-1)} \left|\hat{\phi}\left(\lambda_{q+1}^{\epsilon-1}n\right)\right|^2\\
        &\times \int_0^1 \hat{K}^{m,\ell,\pm}_{q+1,k,r}\left(5\lambda_{q+1}^{\epsilon}\mathcal{O}_kn,-5\lambda_{q+1}^{\epsilon}\mathcal{O}_kn\right) dr.
    \end{split}
\end{equation}

\noindent Now from ~\eqref{eq:M^*} and ~\eqref{eq:kernel} we see that

\begin{equation*}
    \begin{split}       \hat{K}^{m,\ell,\pm}_{q+1,k,r}\left(\lambda_{q+1}^{\epsilon}5\mathcal{O}_kn,-\lambda_{q+1}^{\epsilon}5\mathcal{O}_kn\right) & = -\frac{2\pi}{\sigma_{q+1}^2} \left(5\lambda_{q+1}^\epsilon \mathcal{O}_kn + \sigma_{q+1}(k \pm 2k^\perp)\right)^m\\
    & \times \left(5\lambda_{q+1}^\epsilon\mathcal{O}_kn + \sigma_{q+1}(k \pm 2k^\perp)\right)^\ell\\
    & \times \left|5\lambda_{q+1}^\epsilon\mathcal{O}_kn + \sigma_{q+1}(k \pm 2k^\perp)\right|\\
    &\times \left|\hat{K}_{\simeq 1}\left(5\lambda_{q+1}^{\epsilon-1}\mathcal{O}_kn\right)\right|^2.
    \end{split}
\end{equation*}
Thus we have
\begin{equation}\label{eq:K^+}
    \begin{split}        & \hat{K}^{m,\ell,+}_{q+1,k,r}\left(\lambda_{q+1}^{\epsilon}5\mathcal{O}_kn,-\lambda_{q+1}^{\epsilon}5\mathcal{O}_kn\right)                                                                          \\
                     & = -\frac{2\pi}{\sigma_{q+1}^2} \bigg( \left(5\lambda_{q+1}^\epsilon n_1 + \sigma_{q+1}\right)^2 k^m k^\ell + \left(5\lambda_{q+1}^\epsilon n_2 + 2\sigma_{q+1}\right)^2 (k^\perp)^m (k^\perp)^\ell \\
                     & + \left(5\lambda_{q+1}^\epsilon n_1 + \sigma_{q+1}\right)\left(5\lambda_{q+1}^\epsilon n_2 + 2\sigma_{q+1}\right)\left(k^m (k^\perp)^\ell + (k^\perp)^m k^\ell\right)\bigg)                        \\
                     & \times \left|5\lambda_{q+1}^{\epsilon-1}\mathcal{O}_kn + \sigma_{q+1}(k + 2k^\perp)\right| \left|\hat{K}_{\simeq 1}\left(5\lambda_{q+1}^{\epsilon-1} \mathcal{O}_kn\right)\right|^2
    \end{split}
\end{equation}
and
\begin{equation}\label{eq:K^-1}
    \begin{split}
         & \hat{K}^{m,\ell,-}_{q+1,k,r}\left(\lambda_{q+1}^{\epsilon}5\mathcal{O}_kn,-\lambda_{q+1}^{\epsilon}5\mathcal{O}_kn\right)                                                                           \\
         & = -\frac{2\pi}{\sigma_{q+1}^2} \bigg( \left(5\lambda_{q+1}^\epsilon n_1 + \sigma_{q+1}\right)^2 k^m k^\ell + \left(5\lambda_{q+1}^\epsilon n_2 - 2\sigma_{q+1}\right)^2 (k^\perp)^m (k^\perp)^\ell \\
         & + \left(5\lambda_{q+1}^\epsilon n_1 + \sigma_{q+1}\right)\left(5\lambda_{q+1}^\epsilon n_2 - 2\sigma_{q+1}\right)\left(k^m (k^\perp)^\ell + (k^\perp)^m k^\ell\right)\bigg)                        \\
         & \times \left|5\lambda_{q+1}^{\epsilon-1}\mathcal{O}_kn + \sigma_{q+1}(k - 2k^\perp)\right| \left|\hat{K}_{\simeq 1}\left(5\lambda_{q+1}^{\epsilon-1} \mathcal{O}_kn\right)\right|^2.
    \end{split}
\end{equation}
If $n = (n_1,n_2)$, then setting $\tilde{n} = (n_1,-n_2)$ and applying ~\eqref{eq:K^-1} gives
\begin{equation}\label{eq:K^-2}
    \begin{split}
         & \hat{K}^{m,\ell,-}_{q+1,k,r}\left(\lambda_{q+1}^{\epsilon}5\mathcal{O}_k\tilde{n},-\lambda_{q+1}^{\epsilon}5\mathcal{O}_k\tilde{n}\right)                                                          \\
         & = -\frac{2\pi}{\sigma_{q+1}^2} \bigg( \left(5\lambda_{q+1}^\epsilon n_1 + \sigma_{q+1}\right)^2 k^m k^\ell + \left(5\lambda_{q+1}^\epsilon n_2 + 2\sigma_{q+1}\right)^2 (k^\perp)^m (k^\perp)^\ell \\
         & - \left(5\lambda_{q+1}^\epsilon n_1 + \sigma_{q+1}\right)\left(5\lambda_{q+1}^\epsilon n_2 + 2\sigma_{q+1}\right)\left(k^m (k^\perp)^\ell + (k^\perp)^m k^\ell\right)\bigg)                        \\
         & \times \left|5\lambda_{q+1}^{\epsilon-1}\mathcal{O}_kn + \sigma_{q+1}(k + 2k^\perp)\right| \left|\hat{K}_{\simeq 1}\left(5\lambda_{q+1}^{\epsilon-1} \mathcal{O}_kn\right)\right|^2.
    \end{split}
\end{equation}
So combining ~\eqref{eq:Q^11_initial}, ~\eqref{eq:K^+}, ~\eqref{eq:K^-1}, ~\eqref{eq:K^-2}, and the radial symmetry of $\phi$ gives
\begin{equation*}
    \begin{split}
         &\mathcal{Q}_{q+1,k}^{+,1} + \mathcal{Q}_{q+1,k}^{-,1} \\
         &= \frac{a^2_k(x)}{2} \sum_{n \in \Z^2} \lambda_{q+1}^{2(\epsilon-1)} \left|\hat{\phi}\left(\lambda_{q+1}^{\epsilon-1}n\right)\right|^2 \\
         &\times\int_0^1 \bigg(\hat{K}^{+}_{q+1,k,r}\left(\lambda_{q+1}^{\epsilon}5\mathcal{O}_kn,-\lambda_{q+1}^{\epsilon}5\mathcal{O}_kn\right) + \hat{K}^{-}_{q+1,k,r}\left(\lambda_{q+1}^{\epsilon}5\mathcal{O}_k\tilde{n},-\lambda_{q+1}^{\epsilon}5\mathcal{O}_k\tilde{n}\right)\bigg)\, dr\\
         & = -\frac{a^2_k(x)}{2} \sum_{n \in \Z^2} \frac{4\pi}{\sigma_{q+1}^2} \lambda_{q+1}^{2(\epsilon-1)} \left|\hat{\phi}\left(\lambda_{q+1}^{\epsilon-1}n\right)\right|^2 \bigg( \left(5\lambda_{q+1}^\epsilon n_1 + \sigma_{q+1}\right)^2 k \otimes k + \left(5\lambda_{q+1}^\epsilon n_2 + 2\sigma_{q+1}\right)^2 k^\perp \otimes k^\perp\bigg)\\
         &\times\left|5\lambda_{q+1}^{\epsilon-1}\mathcal{O}_kn + \sigma_{q+1}(k + 2k^\perp)\right| \left|\hat{K}_{\simeq 1}\left(5\lambda_{q+1}^{\epsilon-1} \mathcal{O}_kn\right)\right|^2
    \end{split}
\end{equation*}
where $\mathcal{Q}^{\pm,1}_{q+1,k}$ is the $2 \times 2$ matrix whose $m,\ell$ entry is $\mathcal{Q}^{m,\ell,\pm,1}_{q+1,k}$. Now we have
\begin{equation}\label{eq:Q^++Q^-2}
    \begin{split}
         \mathcal{Q}_{q+1,k}^{+,1} + \mathcal{Q}_{q+1,k}^{-,1}
         &= -\frac{\lambda_{q+1}a_k^2(x)}{2} k^\perp \otimes k^\perp \sum_{n \in \Z^2} 160\pi \left|\hat{\phi}\left(\lambda_{q+1}^{\epsilon-1}n\right)\right|^2 \bigg(\left(\lambda_{q+1}^{\epsilon-1} n_2 + \frac{1}{4}\right)^2\\
         &-\left(\lambda_{q+1}^{\epsilon-1} n_1 + \frac{1}{8}\right)^2 \bigg) \left|\lambda_{q+1}^{\epsilon-1}\mathcal{O}_kn + \frac{1}{8}(k + 2k^\perp)\right| \left|\hat{K}_{\simeq 1}\left(\lambda_{q+1}^{\epsilon-1} \mathcal{O}_kn\right)\right|^2 \lambda_{q+1}^{2(\epsilon-1)}\\
         & - \frac{\lambda_{q+1}a_k^2(x)}{2}I\sum_{n \in \Z^2} 160\pi \left|\hat{\phi}\left(\lambda_{q+1}^{\epsilon-1}n\right)\right|^2 \left(\lambda_{q+1}^{\epsilon-1} n_1 + \frac{1}{8}\right)^2\\
         & \times \left|\lambda_{q+1}^{\epsilon-1}\mathcal{O}_kn + \frac{1}{8}(k + 2k^\perp)\right| \left|\hat{K}_{\simeq 1}\left(5\lambda_{q+1}^{\epsilon-1} \mathcal{O}_kn\right)\right|^2 \lambda_{q+1}^{2(\epsilon-1)}
         \end{split}
\end{equation}
where we have used that $I = k \otimes k + k^{\perp} \otimes k^{\perp}$. Standard results in Riemann integration theory show that for $\lambda_{q+1}$ large we have that
\begin{equation}\label{eq:Riem_1}
    \begin{split}
         & \sum_{n \in \Z^2} 160\pi \left|\hat{\phi}\left(\lambda_{q+1}^{\epsilon-1}n\right)\right|^2 \left(\left(\lambda_{q+1}^{\epsilon-1} n_2 + \frac{1}{4}\right)^2 - \left(\lambda_{q+1}^{\epsilon-1} n_1 + \frac{1}{8}\right)^2 \right) \left|\lambda_{q+1}^{\epsilon-1}\mathcal{O}_kn + \frac{1}{8}(k + 2k^\perp)\right| \\
         & \times \left|\hat{K}_{\simeq 1}\left(5\lambda_{q+1}^{\epsilon-1} \mathcal{O}_kn\right)\right|^2 \lambda_{q+1}^{2(\epsilon-1)}                                                                                                                                                                                        \\
         & = \int_{\R^2} 160\pi |\hat{\phi}(x)|^2 \left(\left(x_2 + \frac{1}{4}\right)^2 - \left(x_1 + \frac{1}{8}\right)^2\right) \left|\mathcal{O}_kx + \frac{1}{8}\left(k + 2k^\perp\right)\right| \left|\hat{K}_{\simeq 1}\left(5\mathcal{O}_kx\right)\right|^2\, dx                                                        \\
         & + O\left(\lambda_{q+1}^{2(\epsilon-1)}\right)
    \end{split}
\end{equation}
and
\begin{equation}\label{eq:Riem_2}
    \begin{split}
         & \sum_{n \in \Z^2} 160\pi \left|\hat{\phi}\left(\lambda_{q+1}^{\epsilon-1}n\right)\right|^2 \left(\lambda_{q+1}^{\epsilon-1} n_1 + \frac{1}{8}\right)^2 \left|\lambda_{q+1}^{\epsilon-1}\mathcal{O}_kn + \frac{1}{8}(k + 2k^\perp)\right| \left|\hat{K}_{\simeq 1}\left(5\lambda_{q+1}^{\epsilon-1} \mathcal{O}_kn\right)\right|^2 \lambda_{q+1}^{2(\epsilon-1)} \\
         & = \int_{\R^2} 160\pi |\hat{\phi}(x)|^2 \left(x_1 + \frac{1}{8}\right)^2 \left|\mathcal{O}_kx + \frac{1}{8}\left(k + 2k^\perp\right)\right| \left|\hat{K}_{\simeq 1}\left(5\mathcal{O}_kx\right)\right|^2\, dx + O\left(\lambda_{q+1}^{2(\epsilon-1)}\right).
    \end{split}
\end{equation}
Recall from ~\eqref{eq:C_value} we had defined
\begin{equation*}
    \begin{split}
        C^{-1} &= \int_{\R^2} 160\pi |\hat{\phi}(x)|^2 \left(\left(x_2 + \frac{1}{4}\right)^2 - \left(x_1 + \frac{1}{8}\right)^2\right) \left|\mathcal{O}_kx + \frac{1}{8}\left(k + 2k^\perp\right)\right| \left|\hat{K}_{\simeq 1}\left(5\mathcal{O}_kx\right)\right|^2\, dx.
    \end{split}
\end{equation*}
Now set
\begin{equation}\label{eq:asym_const_1}
    \begin{split}
        C'_{q+1} & = \lambda_{q+1}^{2(1-\epsilon)}\bigg( \sum_{n \in \Z^2} 160\pi \left|\hat{\phi}\left(\lambda_{q+1}^{\epsilon-1}n\right)\right|^2 \left(\left(\lambda_{q+1}^{\epsilon-1} n_2 + \frac{1}{4}\right)^2 - \left(\lambda_{q+1}^{\epsilon-1} n_1 + \frac{1}{8}\right)^2 \right) \\
                 & \times \left|\lambda_{q+1}^{\epsilon-1}\mathcal{O}_kn + \frac{1}{8}(k + 2k^\perp)\right|
        \left|\hat{K}_{\simeq 1}\left(5\lambda_{q+1}^{\epsilon-1} \mathcal{O}_kn\right)\right|^2 \lambda_{q+1}^{2(\epsilon-1)} - C^{-1}\bigg),
    \end{split}
\end{equation}
\begin{equation}\label{eq:const_2}
    \tilde{C} = \int_{\R^2} 160\pi |\hat{\phi}(x)|^2 \left(x_1 + \frac{1}{8}\right)^2 \left|\mathcal{O}_kx + \frac{1}{8}\left(k + 2k^\perp\right)\right| \left|\hat{K}_{\simeq 1}\left(5\mathcal{O}_kx\right)\right|^2\, dx,
\end{equation}
and
\begin{equation}\label{eq:asym_const_2}
    \begin{split}
        \tilde{C}'_{q+1} & = \lambda_{q+1}^{2(1-\epsilon)}\bigg( \sum_{n \in \Z^2} 160\pi \left|\hat{\phi}\left(\lambda_{q+1}^{\epsilon-1}n\right)\right|^2 \left(\lambda_{q+1}^{\epsilon-1} n_1 + \frac{1}{8}\right)^2 \left|\lambda_{q+1}^{\epsilon-1}\mathcal{O}_kn + \frac{1}{8}(k + 2k^\perp)\right| \\
        & \times \left|\hat{K}_{\simeq 1}\left(5\lambda_{q+1}^{\epsilon-1} \mathcal{O}_kn\right)\right|^2 \lambda_{q+1}^{2(\epsilon-1)} - \tilde{C}\bigg).
    \end{split}
\end{equation}
Hence combining ~\eqref{eq:C_value}, ~\eqref{eq:Q^++Q^-2}, ~\eqref{eq:asym_const_1}, ~\eqref{eq:asym_const_2}, and ~\eqref{eq:const_2} we achieve
\begin{equation}\label{eq:Q^++Q^-2b}
    \begin{split}
        \sum_{k \in \Omega} \mathcal{Q}_{q+1,k}^{+,1} + \mathcal{Q}_{q+1,k}^{-,1} & = -\sum_{k \in \Omega} \frac{\lambda_{q+1}a_k^2(x)}{2} k^\perp \otimes k^\perp \left(C^{-1} + \lambda_{q+1}^{2(\epsilon-1)}C'_{q+1}\right) \\
                                                                                  & -\sum_{k \in \Omega} \frac{\lambda_{q+1}a_k^2(x)}{2}I \left(\tilde{C} + \lambda_{q+1}^{2(\epsilon-1)}\tilde{C}'_{q+1}\right).
    \end{split}
\end{equation}
We apply Lemma \ref{lem:geom} to the top line of ~\eqref{eq:Q^++Q^-2b} to obtain
\begin{equation}\label{eq:Rq_cancel}
    \begin{split}
        -\sum_{k \in \Omega} \frac{\lambda_{q+1}a_k^2(x)}{2} k^\perp \otimes k^\perp \left(C^{-1} + \lambda_{q+1}^{2(\epsilon-1)}C'_{q+1}\right) & = -\epsilon_q^{-1}\left(I + \epsilon_qCR_q\right)\left(C^{-1} + \lambda_{q+1}^{2(\epsilon-1)}C'_{q+1}\right) \\
                                                                                                                  & = -\epsilon_q^{-1}C^{-1}I - \epsilon_q\lambda_{q+1}^{2(\epsilon-1)}C_{q+1}'I                                 \\
                                                                                                                                                 & - R_q - \lambda_{q+1}^{2(\epsilon-1)}CC'_{q+1}R_q.
    \end{split}
\end{equation}
Note we have from ~\eqref{eq:Riem_1} and ~\eqref{eq:Riem_2} that $|C_{q+1}'| + |\tilde{C}'_{q+1}| \lesssim 1$. In ~\eqref{eq:pk2_est} we will show that
$$
    \lambda_{q+1} \Vert a_k^2 \Vert_{\dot{H}^{-4}} \lesssim \Vert R_q \Vert_{\dot{H}^{-4}}.
$$
Utilizing these two observations as well as ~\eqref{eq:Q^++Q^-2b}, ~\eqref{eq:Rq_cancel}, and the fact that the $\dot{H}^{-4}$ norm of constants is $0$ we see
\begin{equation}\label{eq:Q11_final_est}
    \begin{split}
        \left\Vert R_q + \sum_{k \in \Omega} \left(\mathcal{Q}_{q+1,k}^{+,1} + \mathcal{Q}_{q+1,k}^{-,1} + \frac{\lambda_{q+1}\tilde{C}}{2}a_k^2(x)I\right) \right\Vert_{\dot{H}^{-4}} & \lesssim \lambda_{q+1}^{2(\epsilon-1)}\left( |C| |C_{q+1}'| + \frac{|\tilde{C}'_{q+1}|}{2}\right) \Vert R_q\Vert_{\dot{H}^{-4}} \\
                                                                                                                                                                                       & \lesssim \lambda_{q+1}^{2(\epsilon-1)} 2^{-q}                                                                                   \\
                                                                                                                                                                                       & < 2^{-2q-100}
    \end{split}
\end{equation}
for $\lambda_{q+1}$ large enough.\\
Now, notice that $\mathcal{Q}_{q+1,k}^{m,\ell,\pm,1,2}$ and $\mathcal{Q}_{q+1,k}^{m,\ell,\pm,1,3}$ are symmetric, so it suffices to only bound one of them. We choose to focus our attention on $\mathcal{Q}_{q+1,k}^{m,\ell,\pm,1,2}$, which recall is given by
\begin{equation*}
    \begin{split}
         \mathcal{Q}_{q+1,k}^{m,\ell,\pm,1,2}&= - \frac{1}{2}\int_0^1 \int_0^1 \int_{\R^2} \int_{\R^2} K^{m,\ell,\pm}_{q+1,k,r}(y,z) \mathbb{P}_{=0,x}\left(\rho_{q+1}^{k}(x-y) \rho_{q+1}^{k}(x-z)\right) \\
        & \times a_k(x) y \cdot \nabla a_k(x - ty)\, dy\,dz\,dr\, dt.
    \end{split}
\end{equation*}
For this, using ~\eqref{eq:kernel_est} we have

\begin{equation*}
    \begin{split}
        \left\Vert \mathcal{Q}_{q+1,k}^{m,\ell,\pm,1,2} \right\Vert_{L^{\infty}}& \lesssim \int_0^1 \int_0^1 \int_{\R^2} \int_{\R^2} \left|yK^{m,\ell,\pm}_{q+1,k,r}(y,z)\right| \lambda_{q+1}^{-1}\, dy\, dz\, dr\, dt\\
        &\lesssim \lambda_{q+1}^{-1}.
    \end{split}
\end{equation*}
We have implicitly used that $\left|\mathbb{P}_{=0,x}\left(\rho_{q+1}^k(x-y) \rho_{q+1}^k(x-z)\right)\right| \lesssim 1$. Let us briefly justify this. Using ~\eqref{eq:mean_prod} and Riemann sum considerations we have that
\begin{equation}\label{eq:bdd_mean}
    \left|\mathbb{P}_{=0,x}\left(\rho_{q+1}^k(x-y) \rho_{q+1}^k(x-z)\right)\right| \lesssim \sum_{n \in \Z} \lambda_{q+1}^{\epsilon-1} \left|\hat{\phi}\left(\lambda_{q+1}^{\epsilon-1} n\right)\right|^2 \simeq \Vert \hat{\phi} \Vert_{L^2}^2 \simeq 1
\end{equation}
proving the claim. Hence, choosing $\lambda_{q+1}$ large enough we obtain that
\begin{equation}\label{eq:Q12Q13_est}
    \begin{split}
        \left\Vert \sum_{k \in \Omega} \left(\mathcal{Q}_{q+1,k}^{\pm,1,2} + \mathcal{Q}_{q+1,k}^{\pm,1,3}\right)\right\Vert_{\dot{H}^{-4}} &\lesssim \left\Vert \sum_{k \in \Omega} \left(\mathcal{Q}_{q+1,k}^{\pm,1,2} + \mathcal{Q}_{q+1,k}^{\pm,1,3}\right)\right\Vert_{L^{\infty}}\\
        &\lesssim \lambda_{q+1}^{-1}\\
        &< 2^{-2q-100}.
    \end{split}
\end{equation}
For the final term $\mathcal{Q}_{q+1,k}^{m,\ell,\pm,1,4}$, which recall is given by
\begin{equation*}
    \begin{split}
        \mathcal{Q}_{q+1,k}^{m,\ell,\pm,1,4} &= + \frac{1}{2}\int_0^1 \int_{\R^2} \int_{\R^2} K^{m,\ell,\pm}_{q+1,k,r}(y,z) \mathbb{P}_{=0,x}\left(\rho_{q+1}^{k}(x-y) \rho_{q+1}^{k}(x-z)\right) \\
        & \times \prod_{w \in \{y,z\}}  \left(w \cdot \int_0^1 \nabla a_k(x - tw)\, dt\right) dy\,dz\,dr.
    \end{split}
\end{equation*}
upon multiplying out the two dot products, a generic term will be of the form
\begin{equation*}
    \begin{split}
        \mathcal{Q}_{q+1,k}^{m,\ell,\pm,1,4,a,b} & :=\int_0^1 \int_0^1 \int_0^1 \int_{\R^2} \int_{\R^2} y^{a} z^{b} K^{m,\ell,\pm}_{q+1,k,r}(y,z) \mathbb{P}_{=0,x}\left(\rho_{q+1}^k(x-y) \rho_{q+1}^k(x-z)\right) \\
                                                 & \times \nabla^a a_k(x - t_1y)  \nabla^b a_k(x - t_2z)\, dy\,dz\,dr\, dt_1\, dt_2
    \end{split}
\end{equation*}
for $|a| + |b| = 2$. Hence applying the kernel estimate ~\eqref{eq:kernel_est} and ~\eqref{eq:bdd_mean} we achieve
\begin{equation*}
    \begin{split}
        \left\Vert \mathcal{Q}_{q+1,k}^{m,\ell,\pm,1,4,a,b} \right\Vert_{L^{\infty}} & \lesssim \int_0^1 \int_0^1 \int_0^1 \int_{\R^2} \int_{\R^2} \left|y^{a} z^{b} K^{m,\ell,\pm}_{q+1,k,r}(y,z)\right| \left|\mathbb{P}_{=0,x}\left(\rho_{q+1}^k(x-y) \rho_{q+1}^k(x-z)\right)\right| \\
                                                                                     & \times \Vert \nabla^{a} a_k(x - t_1y)  \nabla^{b} a_k(x - t_2z) \Vert_{L^{\infty}}\, dy\, dz\, dr\, dt_1\, dt_2                                                                                   \\
                                                                                     & \lesssim \int_0^1 \int_0^1 \int_0^1 \int_{\R^2} \int_{\R^2} \left|y^{a} z^{b} K^{m,\ell,\pm}_{q+1,k,r}(y,z)\right| \lambda_{q+1}^{-1}\, dy\, dz\, dr\, dt_1\, dt_2                                \\
                                                                                     & \lesssim \lambda_{q+1}^{-2}.
    \end{split}
\end{equation*}
Hence
\begin{equation}\label{eq:Q14_finalest}
\begin{split}
    \left\Vert \sum_{k \in \Omega} \mathcal{Q}_{q+1,k}^{\pm,1,4}\right\Vert_{\dot{H}^{-4}}
    \lesssim \sum_{k \in \Omega} \sum_{|a| + |b| = 2}  \left\Vert \mathcal{Q}_{q+1,k}^{\pm,1,4,a,b} \right\Vert_{L^{\infty}}\lesssim \lambda_{q+1}^{-2}< 2^{-2q-100}
    \end{split}
\end{equation}
for $\lambda_{q+1}$ large enough.
\noindent
Now we estimate the two remaining high frequency terms in ~\eqref{eq:S^m_decomp}. Put
\begin{equation*}\label{eq:Q^+-}
    \mathcal{Q}_{q+1,k}^{m,\ell,+,-} := \frac{1}{2}S^m(\Lambda^{-1} \vartheta^{+}_{q+1,k}, R^{\ell}\vartheta^{-}_{q+1,-k})
\end{equation*}
and
\begin{equation*}
    \mathcal{Q}_{q+1,k}^{m,\ell,-,+} := \frac{1}{2}S^m(\Lambda^{-1} \vartheta^{-}_{q+1,k}, R^{\ell}\vartheta^{+}_{q+1,-k}).
\end{equation*}
The analysis for $\mathcal{Q}_{q+1,k}^{m,\ell,-,+}$ is nearly identical to the procedure used to bound $\mathcal{Q}_{q+1,k}^{m,\ell,+,-}$, so we will only present the full details involved in bounding the latter term. Using entirely analogous procedures as to what is performed in ~\eqref{eq:Q^pm}-\eqref{eq:kernel_est}, one may obtain the following representation
\begin{equation*}
\begin{split}
    \mathcal{Q}_{q+1,k}^{m,\ell,+,-} &= \frac{e^{2\pi i 4k^\perp \sigma_{q+1} \cdot x}}{2} \int_0^1 \int_{\R^2} \int_{\R^2} K^{m,\ell,+,-}_{q+1,k,r}(x-y,x-z)\\
    &\times(a_k(R_q) \rho_{q+1}^{k})(y)(a_k(R_q) \rho_{q+1}^{k})(z)\, dy\,dz\,dr
    \end{split}
\end{equation*}
where
\begin{equation}\label{eq:M^+-*}
    \begin{split}
        M^{m,\ell,+,-*}_{k,r}(\xi,\eta) & = -2\pi \frac{((1-r)\eta - r\xi- (k + 2(4r-1)k^\perp))^m}{|(1-r)\eta - r\xi- (k + 2(4r-1)k^\perp)|}\\
        &\times (\eta^{\ell} - (k - 2k^\perp)^{\ell}) |\xi + (k + 2k^\perp)|\\
        &\times |\eta - (k - 2k^\perp)| \hat{K}_{\simeq 1}\left(\frac{5}{8}\xi\right) \hat{K}_{\simeq 1}\left(\frac{5}{8}\eta\right)
    \end{split}
\end{equation}
and
\begin{equation*}
    K^{m,\ell,+,-}_{q+1,k,r}(y,z) = \sigma_{q+1}^5 \left( M^{m,\ell,+,-*}_{k,r}\right)^{\vee}\left(\sigma_{q+1}y,\sigma_{q+1}z\right).
\end{equation*}
From ~\eqref{eq:M^+-*} and the fact
$$
|k + 2(4r-1)k^\perp| \geq 1 \quad \text{for all } r \in (0,1),
$$
we deduce that $M^{m,\ell,+,-*}_{k,r}$ is smooth, of compact support, and bounded independently of $r$. Hence $K^{m,\ell,+,-}_{q+1,k,r}$ obeys the same estimate as in ~\eqref{eq:kernel_est}. Importantly for us in this setting, this means that
\begin{equation}\label{eq:mult_est}
    \left\Vert \hat{K}^{m,\ell,+,-}_{q+1,k,r}(\xi,\eta) \right\Vert_{L^{\infty}} \lesssim \sigma_{q+1}
\end{equation}
and $\hat{K}^{m,\ell,+,-}_{q+1,k,r}$ is supported in $B(0,\lambda_{q+1}/5) \times B(0,\lambda_{q+1}/5)$. Now put
$$
    T(x) = \int_0^1 \int_{\R^2} \int_{\R^2} K^{m,\ell,+,-}_{q+1,k,r}(x-y,x-z) (a_k(R_q) \rho_{q+1}^{k})(y)(a_k(R_q) \rho_{q+1}^{k})(z)\, dy\,dz\,dr
$$
so that
$$
    2\mathcal{Q}_{q+1,k}^{m,\ell,+,-} = T(x)e^{2\pi i 4k^\perp \sigma_{q+1} \cdot x}.
$$
The Fourier transform of $T$ is given by
$$
 \widehat{T}(\zeta)  = \int_{\R^2} \int_{\R^2}\int_{\R^2} \widehat{K}^{m,\ell,+,-}_{q+1,k,r}(\xi,\eta) (a_k(R_q)\rho^k)^{\wedge}(\xi)(a_k(R_q)\rho^k)^{\wedge}(\eta) e^{2\pi i x \cdot (\xi + \eta -\zeta)} dx \, d\xi \, d\eta.
$$
In the distributional sense, we have that
$$
\int_{\R^2} e^{2\pi i x \cdot (\xi + \eta -\zeta)}\, dx = \delta(\xi + \eta - \zeta).
$$
Thus
\begin{equation*}
    \begin{split}
        \widehat{T}(\zeta) & = \int_{\R^2} \int_{\R^2} \delta(\xi+\eta-\zeta) \widehat{K}^{m,\ell,+,-}_{q+1,k,r}(\xi,\eta)(a_k(R_q)\rho^k)^{\wedge}(\xi)(a_k(R_q)\rho^k)^{\wedge}(\eta) d\xi \, d\eta\\
        & = \int_{\R^2} \widehat{K}^{m,\ell,+,-}_{q+1,k,r}(\xi , \zeta - \xi)(a_k(R_q)\rho^k)^{\wedge}(\xi) (a_k(R_q)\rho^k)^{\wedge}(\zeta - \xi) d\xi.
    \end{split}
\end{equation*}
The support of $\hat{K}^{m,\ell,+,-}_{q+1,k,r}$ forces
$$
    |\xi| \leq \frac{\lambda_{q+1}}{5} \quad \text{and} \quad |\zeta - \xi| \leq \frac{\lambda_{q+1}}{5}.
$$
Thus
$$
    |\zeta| \leq |\zeta - \xi| + |\xi| \leq \frac{2\lambda_{q+1}}{5}.
$$
So, $\hat{T}$ is supported in $B(0,2\lambda_{q+1}/5)$. Using this as well as ~\eqref{eq:mult_est}, we compute
\begin{equation*}
    \begin{split}
        \Vert \mathcal{Q}_{q+1,k}^{m,\ell,+,-} \Vert_{\dot{H}^{-4}}^2 & \lesssim \Vert T(x)e^{2\pi i 4k^\perp \sigma_{q+1} \cdot x} \Vert_{\dot{H}^{-4}}^2                                  \\
                                                                      & = \sum_{\zeta \in \Z^2 \setminus \{0\}} |\zeta|^{-8} |\hat{T}(\zeta - 4\sigma_{q+1} k^\perp)|^2                     \\
                                                                      & = \sum_{\zeta \in B(4\sigma_{q+1}k^\perp,2\lambda_{q+1}/5)} |\zeta|^{-8} |\hat{T}(\zeta - 4\sigma_{q+1} k^\perp)|^2 \\
                                                                      & \simeq \lambda_{q+1}^{-8} \sum_{\zeta \in B(0,2\lambda_{q+1}/5)} |\hat{T}(\zeta)|^2                                 \\
                                                                      & \lesssim \lambda_{q+1}^{-6} \sum_{\zeta \in B(0,2\lambda_{q+1}/5)} \left\Vert a_k(R_q) \rho^k\right\Vert_{L^2}^4    \\
                                                                      & \lesssim \lambda_{q+1}^{-6} \lambda_{q+1}^2 \left(\lambda_{q+1}^{-1/2}\right)^4                                     \\
                                                                      & = \lambda_{q+1}^{-6}.
    \end{split}
\end{equation*}
So using this estimate as well as applying an identical procedure for $\mathcal{Q}_{q+1,k}^{m,\ell,-,+}$, we obtain, for $\lambda_{q+1}$ large enough, that
\begin{equation}\label{eq:high_freq_ests}
    \left\Vert \mathcal{Q}_{q+1,k}^{m,\ell,+,-} \right\Vert_{\dot{H}^{-4}} +  \left\Vert \mathcal{Q}_{q+1,k}^{m,\ell,-,+} \right\Vert_{\dot{H}^{-4}} \lesssim \lambda_{q+1}^{-3} < 2^{-2q-100}.
\end{equation}
Hence combining ~\eqref{eq:Q^2_est}, ~\eqref{eq:Q^1_decomp}, ~\eqref{eq:Q11_final_est}, ~\eqref{eq:Q12Q13_est}, ~\eqref{eq:Q14_finalest}, and ~\eqref{eq:high_freq_ests} we obtain
\begin{equation}\label{eq:Q_final_est}
    \begin{split}
        \bigg\Vert R_q +  \sum_{k \in \Omega} \bigg(\mathcal{Q}^+_{q+1,k} + \mathcal{Q}^-_{q+1,k} + \mathcal{Q}^{+,+}_{q+1,k} &+ \mathcal{Q}^{-,-}_{q+1,k}\bigg) + \tilde{p}_{q+1}I \bigg\Vert_{\dot{H}^{-4}}\\
        &< (32)2^{-2q-100} = 2^{-2q-95}
    \end{split}
\end{equation}
which completes the proof of the boundedness of the low frequency term.\\
Now we turn our attention towards the high frequency component. So we estimate
\begin{equation*}
\begin{split}
    &\left\Vert \sum_{k+k' \not = 0}  \mathcal{R}\left(\Lambda w_{q+1,k} \cdot \nabla w_{q+1,k'} - (\nabla w_{q+1,k})^T \cdot \Lambda w_{q+1,k'}\right)\right\Vert_{\dot{H}^{-4}}\\
    \lesssim& \sum_{k+k' \not = 0} \left\Vert\left(R\vartheta_{q+1,k}\right)\vartheta_{q+1,k'} \right\Vert_{\dot{H}^{-4}}.
    \end{split}
\end{equation*}
Fix $k,k' \in \Omega$ such that $k \not = - k'$. Then we have that
\begin{equation}\label{eq:Rtheta_theta_est}
    \begin{split}
        \left\Vert (R\vartheta_{q+1,k})\vartheta_{q+1,k'} \right\Vert_{\dot{H}^{-4}}^2 & = \sum_{j \not = 0} |j|^{-8} \left|\sum_{n \in \Z^2} (R\vartheta_{q+1,k})^{\wedge}(n) \hat{\vartheta}_{q+1,k'}(j-n)\right|^2         \\
                                                                                       & = \sum_{j \not = 0} |j|^{-8} \left|\sum_{n \in \Z^2} \frac{in}{|n|}\hat{\vartheta}_{q+1,k}(n) \hat{\vartheta}_{q+1,k'}(j-n)\right|^2.
    \end{split}
\end{equation}
Note that $\vartheta_{q+1,k}$ has frequency support in the ball $B((k + 2k^\perp)\sigma_{q+1},\lambda_{q+1}/8)$. Hence the inner sum of ~\eqref{eq:Rtheta_theta_est} is nonzero only when we have
$$
    |n - \sigma_{q+1}(k+2k^\perp)| \leq \frac{1}{8}\lambda_{q+1}
$$
and
$$
|j - n - \sigma_{q+1}(k' + 2(k')^\perp)| \leq \frac{1}{8}\lambda_{q+1}.
$$
From the triangle inequality we deduce that $j \in B(\sigma_{q+1}(k + 2k^\perp+k' + 2(k')^\perp),\lambda_{q+1}/4)$. Since $k + k' \not = 0$, from Lemma \ref{lem:geom} we must have that $|k + k'| \geq \frac{1}{2}$. Thus
\begin{equation*}
    |j| \geq \left(\frac{5\sqrt{5}}{16} - \frac{1}{4}\right)\lambda_{q+1} \simeq \lambda_{q+1}.
\end{equation*}
We also have that
$$
    |\Z^2 \setminus \{0\} \cap B(\sigma_{q+1}(k + 2k^\perp + k' + 2(k')^\perp), \lambda_{q+1}/4)| \simeq \lambda_{q+1}^2.
$$
Now using this as well as the $L^2$ boundedness of the Riesz transform and
$$
    \Vert \vartheta_{q+1,k} \Vert_{L^2} \lesssim \lambda_{q+1}
$$
we have
\begin{equation}\label{eq:final_est}
    \begin{split}
        \left\Vert (R\vartheta_{q+1,k})\vartheta_{q+1,k'} \right\Vert_{\dot{H}^{-4}}^2 & = \sum_{j \not = 0} |j|^{-8} |((R\vartheta_{q+1,k})\vartheta_{q+1,k'})^{\wedge}(j)|^2                   \\
                                                                                       & \lesssim \Vert (R\vartheta_{q+1,k})\vartheta_{q+1,k'} \Vert_{L^1}^2 \lambda_{q+1}^2 \lambda_{q+1}^{-8}  \\
                                                                                       & \leq \Vert (R\vartheta_{q+1,k}) \Vert_{L^2}^2 \Vert \vartheta_{q+1,k'} \Vert_{L^2}^2 \lambda_{q+1}^{-8} \\
                                                                                       & \lesssim (\lambda_{q+1}^2\lambda_{q+1}^{-1})^2 \lambda_{q+1}^{-8}                                       \\
                                                                                       & = \lambda_{q+1}^{-6}
    \end{split}
\end{equation}
which goes to $0$ as $\lambda_{q+1} \to \infty$. Hence from ~\eqref{eq:final_est} we may choose $\lambda_{q+1}$ large enough such that
\begin{equation}\label{eq:leftover_terms_est}
    \left\Vert \sum_{k + k' \not = 0} \mathcal{R}\left((R\vartheta_{q+1,k})\vartheta_{q+1,k'}\right) \right\Vert_{\dot{H}^{-4}} < 2^{-2q-100}.
\end{equation}
We see ~\eqref{eq:Q_final_est} and ~\eqref{eq:leftover_terms_est} combine to give
\begin{equation}\label{eq:RO_est}
\begin{split}
    \Vert R_O \Vert_{\dot{H}^{-4}} &\lesssim \left\Vert R_q + \sum_{k \in \Omega} \mathcal{R}(\mathcal{T}_{q+1,k}) + \tilde{p}_{q+1}I \right\Vert_{\dot{H}^{-4}}\\
    &+ \left\Vert \sum_{k + k' \not = 0} \mathcal{R}\left((R\vartheta_{q+1,k})\vartheta_{q+1,k'}\right) \right\Vert_{\dot{H}^{-4}}\\
    &< 2^{-2q-80}.
    \end{split}
\end{equation}
Finally, from ~\eqref{eq:dis_est}, ~\eqref{eq:Nash_error_est} and ~\eqref{eq:RO_est}, we have
\begin{equation}\label{eq:Reynolds_est_final}
    \Vert R_{q+1} \Vert_{\dot{H}^{-4}} < (2)2^{-2q-100} + 2^{-2q-80} < 2^{-2q-40} < 2^{-q-1}.
\end{equation}
From ~\eqref{eq:Reynolds_est_final} we deduce \ref{i:3}.

\subsection{Proof of Item \ref{i:4}}
We utilize the following variant of the classical Bernstein inequality.

\begin{lemma}\label{lem:Bernstein}
    Fix $\alpha \in \R$ and $\lambda > 0$ large. Suppose $u:\T^2 \to \R$ is smooth and
    $\operatorname{supp}(\hat{u}) \subset \{\xi : |\xi| \simeq \lambda\}$. Then
    \begin{equation}\label{eq:C^alpha_est}
        \Vert u \Vert_{\dot{B}^\alpha_{\infty,\infty}} \lesssim \lambda^\alpha \Vert u \Vert_{L^{\infty}}
    \end{equation}
    and
    \begin{equation}\label{eq:H^alpha_est}
        \Vert u \Vert_{\dot{H}^\alpha} \lesssim \lambda^\alpha \Vert u \Vert_{L^2}.
    \end{equation}
\end{lemma}
\begin{proof}
    Recall that
    \begin{equation*}
        \Vert u \Vert_{\dot{B}^\alpha_{\infty,\infty}} = \sup_{j \geq 0} 2^{\alpha j} \Vert \mathbb{P}_{2^j}(u) \Vert_{L^{\infty}}.
    \end{equation*}
    Since the frequency support of $u$ is contained in an annulus of radius $\lambda$, then $\mathbb{P}_{2^j}(u) = 0$ unless $\lambda \simeq 2^j$. Thus
    $$
        \Vert u \Vert_{\dot{B}^\alpha_{\infty,\infty}} \simeq \lambda^\alpha \Vert \mathbb{P}_{\lambda}(u) \Vert_{L^{\infty}} \leq \lambda^\alpha \Vert u \Vert_{L^{\infty}},
    $$
    giving ~\eqref{eq:C^alpha_est}.Note that ~\eqref{eq:H^alpha_est} is obtained similarly.
\end{proof}

\noindent
From \ref{i:6} we have that frequency support of $w_{q+1}$ is contained within annulus $\{\xi : |\xi| \simeq \lambda_{q+1}\}$ and from \ref{i:1} it is smooth, and thus Lemma ~\ref{lem:Bernstein} applies. Now applying ~\eqref{eq:wq+1_est} and ~\eqref{eq:C^alpha_est} we obtain
\begin{equation}\label{eq:C^alpha_est_2}
    \Vert v_{q+1} - v_q \Vert_{\dot{B}^\alpha_{\infty,\infty}} = \Vert w_{q+1} \Vert_{\dot{B}^\alpha_{\infty,\infty}} \lesssim \lambda_{q+1}^{\alpha} \lambda_{q+1}^{-1/2} \lambda_{q+1}^{1-\epsilon}.
\end{equation}
From ~\eqref{eq:beta_epsilon_est}, the exponent in ~\eqref{eq:C^alpha_est_2} is negative, and thus we may choose $\lambda_{q+1}$ large enough to ensure that
\begin{equation}\label{eq:C^alpha_est_3}
    \Vert v_{q+1} - v_q \Vert_{\dot{B}^\alpha_{\infty,\infty}} < 2^{-q-10}.
\end{equation}
Similarly applying ~\eqref{eq:H^alpha_est} we get
\begin{equation}\label{eq:H^alpha_est_2}
    \Vert v_{q+1} - v_q \Vert_{\dot{H}^\alpha} = \Vert w_{q+1} \Vert_{\dot{H}^\alpha} \lesssim \lambda_{q+1}^{\alpha} \lambda_{q+1}^{-1/2}.
\end{equation}
Again applying ~\eqref{eq:beta_epsilon_est}, the exponent in ~\eqref{eq:H^alpha_est_2} is negative, and thus we may choose $\lambda_{q+1}$ large enough to get
\begin{equation}\label{eq:H^alpha_est_3}
    \Vert v_{q+1} - v_q \Vert_{\dot{H}^\alpha} < 2^{-q-10}.
\end{equation}
Combining ~\eqref{eq:C^alpha_est_3} and ~\eqref{eq:H^alpha_est_3} gives \ref{i:4} at level $q+1$.
\subsection{Proof of Item \ref{i:5}}
Recall we assume that $\Vert v_{q} \Vert_{L^1} > (1+2^{-q})\delta$, and we want to prove that $\Vert v_{q+1} \Vert_{L^1} > (1 + 2^{-q-1})\delta$. So we have
\begin{equation}\label{eq:L1_vq_lowerest}
    \Vert v_{q+1} \Vert_{L^1} \geq \Vert v_q \Vert_{L^1} - \Vert w_{q+1} \Vert_{L^1} > (1 + 2^{-q-1})\delta - \Vert w_{q+1} \Vert_{L^1}.
\end{equation}
From ~\eqref{eq:wq+1_est} we may choose $\lambda_{q+1}$ large enough such that
\begin{equation}\label{eq:wq+1_est_delta}
    \Vert w_{q+1} \Vert_{L^1} < 2^{-q-2}\delta.
\end{equation}
With this choice of $\lambda_{q+1}$, combining ~\eqref{eq:L1_vq_lowerest} and ~\eqref{eq:wq+1_est_delta} gives $\Vert v_{q+1} \Vert_{L^1} \geq (1 + 2^{-q-1})\delta$.

\subsection{Proof of Item \ref{i:6}}\label{Sec:freq_support} From Definition \ref{def:wq+1}, it is clear that
\begin{equation*}
    \begin{split}
        \operatorname{supp}(\hat{w}_{q+1}) & = \bigcup_{k \in \Omega} B(\sigma_{q+1}(k + 2k^\perp),\lambda_{q+1}/8)\\
        & \subset \left\{\xi \in \Z^2 : \left(\frac{5\sqrt{5}}{8}-\frac{1}{4}\right)\lambda_{q+1} \leq |\xi| \leq \left(\frac{5\sqrt{5}}{8}+\frac{1}{4}\right)\lambda_{q+1}\right\}.
    \end{split}
\end{equation*}
Note the final set containment above comes from our choice $\sigma_{q+1} = \frac{5}{8}\lambda_{q+1}$. Since
$$
    1 < \frac{5\sqrt{5}}{8}-\frac{1}{4} < \frac{5\sqrt{5}}{8}+\frac{1}{4} < \frac{12}{7}
$$
from Definition \ref{def:projs}, we see that the only value of $j$ such that $\mathbb{P}_{2^j}(w_{q+1}) \not =0$ is when\\ $j = \log_2(\lambda_{q+1})$ and from the frequency support, $\mathbb{P}_{\lambda_{q+1}}(w_{q+1}) = w_{q+1}$.

\subsection{Proof of Item \ref{i:7}}\label{Sec:weak_para}
Clearly
\begin{equation*}
    \begin{split}
        \sum_{\substack{n,m \leq q+1                                                            \\n\not=m}} \Vert \Lambda w_m \nabla^{\perp}\cdot w_n \Vert_{\dot{H}^{-5}} &= \sum_{\substack{n,m \leq q\\n \not = m}} \Vert \Lambda w_m \nabla^{\perp}\cdot w_n \Vert_{\dot{H}^{-5}}\\
         & + \sum_{n \leq q} \Vert \Lambda w_{q+1} \nabla^{\perp}\cdot w_n \Vert_{\dot{H}^{-5}} \\
         & + \sum_{m \leq q} \Vert \Lambda w_m \nabla^{\perp}\cdot w_{q+1} \Vert_{\dot{H}^{-5}}.
    \end{split}
\end{equation*}
First, we deal with the off-diagonal terms. Recall, from our inductive assumption we have
$$
    \sum_{\substack{n,m \leq q\\n \not = m}} \Vert \Lambda w_m \nabla^{\perp}\cdot w_n \Vert_{\dot{H}^{-5}} < C_1 - 2^{-q}.
$$
So then
\begin{equation*}
    \begin{split}
        \Vert \Lambda w_{q+1} \nabla^{\perp}\cdot w_n \Vert_{\dot{H}^{-5}}^2 & \simeq \sum_{j \not =0} |j|^{-10} \left|\sum_{j' \in \Z^2} |j'| \hat{w}_{q+1}(j') (j - j')^{\perp} \hat{w}_n(j-j')\right|^2 \\
                                                                             & \lesssim \sum_{j \not =0} |j|^{-10} \left(\sum_{j' \in \Z^2} |j'||\hat{w}_{q+1}(j')| |j-j'|\hat{w}_n(j-j')|\right)^2.
    \end{split}
\end{equation*}
Notice from the above, in order for the sum to be nonzero, we must have that $|j'| \simeq \lambda_{q+1}$ and $|j-j'| \simeq \lambda_n$. But since $\lambda_{q+1} \gg \lambda_n$, this forces $|j| \simeq \lambda_{q+1}$. Thus utilizing that $|\hat{w}_{q+1}| \leq \Vert w \Vert_{L^2} \lesssim 1$, we have
\begin{equation*}
    \begin{split}
        \Vert \Lambda w_{q+1} \nabla^{\perp}\cdot w_n \Vert_{\dot{H}^{-5}}^2 & \lesssim \sum_{|j| \simeq \lambda_{q+1}} |j|^{-10} (\lambda_{q+1}\lambda_n)^2\\ &\lesssim \lambda_{q+1}^{-7} \lambda_n^2.
    \end{split}
\end{equation*}
Thus, for $\lambda_{q+1}$ large enough we obtain
\begin{equation}\label{eq:ind_est_1}
\begin{split}
    \sum_{n \leq q} \Vert \Lambda w_{q+1} \nabla^{\perp}\cdot w_n \Vert_{\dot{H}^{-5}} &\lesssim \lambda_{q+1}^{-7/2} \sum_{n \leq q} \lambda_n\\
    &< 2^{-2q-100}.
    \end{split}
\end{equation}
Using the same argument, we may deduce that
\begin{equation}\label{eq:ind_est_2}
\begin{split}
    \sum_{m \leq q} \Vert \Lambda w_{m} \nabla^{\perp}\cdot w_{q+1} \Vert_{\dot{H}^{-5}} &\lesssim \lambda_{q+1}^{-7/2} \sum_{m \leq q} \lambda_m \\
    &< 2^{-2q-100}.
    \end{split}
\end{equation}
From our inductive hypothesis, ~\eqref{eq:ind_est_1}, and ~\eqref{eq:ind_est_2} we have
$$
    \sum_{\substack{n,m \leq q+1\\n\not=m}} \Vert \Lambda w_m \nabla^{\perp}\cdot w_n \Vert_{\dot{H}^{-5}} \leq C_1 - 2^{-q} + 2^{-2q-99} < C_1 - 2^{-q-1}.
$$
We now turn to the diagonal terms. Recall from our inductive assumption we assume
$$
    \sum_{n \leq q} \Vert \Lambda w_n \nabla^{\perp}\cdot w_n \Vert_{\dot{H}^{-5}} < C_2 - 2^{-q+100}.
$$
We have
\begin{equation}\label{eq:diag_exp}
    \sum_{n \leq q+1} \Vert \Lambda w_n \nabla^{\perp}\cdot w_n \Vert_{\dot{H}^{-5}} = \sum_{n \leq q} \Vert \Lambda w_n \nabla^{\perp}\cdot w_n \Vert_{\dot{H}^{-5}} + \Vert \Lambda w_{q+1} \nabla^{\perp}\cdot w_{q+1} \Vert_{\dot{H}^{-5}}
\end{equation}
and so it remains to estimate $\Vert \Lambda w_{q+1} \nabla^{\perp}\cdot w_{q+1} \Vert_{\dot{H}^{-5}}$. One can check that for $u:\T^2 \to \R^2$ smooth and divergence free, one has
$$
    \Lambda u = R^{\perp}(\nabla^{\perp} \cdot u).
$$
Then if $T$ denotes the rotation by $\pi/2$, then
$$
    \Lambda w_{q+1} \nabla^{\perp}\cdot w_{q+1} = T(\Lambda w_{q+1}^{\perp} \nabla^{\perp}\cdot w_{q+1}) = T(R(\nabla^{\perp} \cdot w_{q+1}) \nabla^{\perp} \cdot w_{q+1}).
$$
Hence using that the Fourier transform of a rotation is the rotation of the Fourier transform, we get
\begin{equation*}
    \begin{split}
        \Vert \Lambda w_{q+1} \nabla^{\perp}\cdot w_{q+1} \Vert_{\dot{H}^{-5}}^2 & = \sum_{j \not =0} |j|^{-10} \left|\left(T(R(\nabla^{\perp} \cdot w_{q+1}) \nabla^{\perp} \cdot w_{q+1})\right)^{\wedge}(j)\right|^2 \\
        & = \sum_{j \not =0} |j|^{-10} \left|\left(R(\nabla^{\perp} \cdot w_{q+1}) \nabla^{\perp} \cdot w_{q+1}\right)^{\wedge}
        (j)\right|^2.
    \end{split}
\end{equation*}
We recall these terms $R(\nabla^{\perp} \cdot w_{q+1}) \nabla^{\perp} \cdot w_{q+1}$ are precisely the terms which were treated in Section \ref{section_oscillation_error}. The only difference is there is no inverse divergence and we need to manually add and subtract away the pressure terms which were only subtracted before. The lack of the inverse divergence operator being present is the reason the regularity has to be lowered by $1$. From the computations in Section \ref{section_oscillation_error}, we identified two pressure terms for each $k \in \Omega$ which from ~\eqref{eq:pressure_decomp} were
\begin{equation*}
    p_{q+1,k,1} = -\frac 12 \Lambda^{-1} \vartheta_{q+1,k}\vartheta_{q+1,-k}
\end{equation*}
and
\begin{equation*}
    p_{q+1,k,2} = -\frac{\tilde{C}\lambda_{q+1}}{2} a^2_k(R_q).
\end{equation*}
We aim to estimate each of these terms in $\dot{H}^{-4}$ norm and demonstrate they are bounded by some constant multiple of $\Vert R_q \Vert_{\dot{H}^{-4}}$. We start with $p_{q+1,k,2}$. Recall from Lemma \ref{lem:geom} we chose
$$
    \Omega = \{\pm e_1, \pm (3/5,4/5), \pm (3/5,-4/5)\}.
$$
Put $k_1 = e_1$, $k_2 = (3/5,4/5)$, and $k_3 = (3/5,-4/5)$. Then since $a_k(R_q) = a_{-k}(R_q)$ and $k^{\perp} \otimes k^{\perp} = (-k)^{\perp} \otimes (-k)^{\perp}$ then we have that
\begin{equation}\label{eq:lin_combo}
    \begin{split}
        \frac{I - C\epsilon_q R_q}{\lambda_{q+1}\epsilon_q} & = \frac{1}{2}\sum_{k \in \Omega} a^2_k(R_q) k^{\perp} \otimes k^{\perp}\\
        & = a^2_{k_1}(R_q) k_1^{\perp} \otimes k_1^{\perp} + a^2_{k_2}(R_q) k_2^{\perp} \otimes k_2^{\perp} + a^2_{k_3}(R_q) k_3^{\perp} \otimes k_3^{\perp}.
    \end{split}
\end{equation}
Since $k_1^{\perp} \otimes k_1^{\perp}$, $k_2^{\perp} \otimes k_2^{\perp}$ and $k_3^{\perp} \otimes k_3^{\perp}$ form a basis of the space of $2 \times 2$ symmetric matrices, then we may solve for each $a^2_k(R_q)$ in terms of the elements of $\Omega$ and $I - C\epsilon_qR_q$. Indeed, we have using ~\eqref{eq:lin_combo} that
$$
    a^2_{k_1}(R_q) = \left(\frac{I - C\epsilon_q R_q}{\lambda_{q+1}\epsilon_q}\right)_{22} - \frac{9}{16} \left(\frac{I - C\epsilon_q R_q}{\lambda_{q+1}\epsilon_q}\right)_{11},
$$
$$
    a^2_{k_2}(R_q) = \frac{25}{32}\left(\frac{I - C\epsilon_q R_q}{\lambda_{q+1}\epsilon_q}\right)_{11} - \frac{25}{24} \left(\frac{I - C\epsilon_q R_q}{\lambda_{q+1}\epsilon_q}\right)_{12},
$$
and
$$
    a^2_{k_3}(R_q) = \frac{25}{32}\left(\frac{I - C\epsilon_q R_q}{\lambda_{q+1}\epsilon_q}\right)_{11} + \frac{25}{24} \left(\frac{I - C\epsilon_q R_q}{\lambda_{q+1}\epsilon_q}\right)_{12}.
$$
And so
\begin{equation*}
    \begin{split}
        \Vert a^2_{k_1} \Vert_{\dot{H}^{-4}} & \leq \left\Vert\left(\frac{I - C\epsilon_q R_q}{\lambda_{q+1}\epsilon_q}\right)_{22}\right\Vert_{\dot{H}^{-4}} + \frac{9}{16} \left\Vert \left(\frac{I - C\epsilon_q R_q}{\lambda_{q+1}\epsilon_q}\right)_{11} \right\Vert_{\dot{H}^{-4}} \\
                                             & \leq \frac{25}{16} \left\Vert \frac{I - C\epsilon_q R_q}{\lambda_{q+1}\epsilon_q}\right\Vert_{\dot{H}^{-4}}                                                                                                                               \\
                                             & \leq \frac{4C}{\lambda_{q+1}} \Vert R_q \Vert_{\dot{H}^{-4}}.
    \end{split}
\end{equation*}
Similarly we get
\begin{equation*}
    \Vert a^2_{k_2}(R_q) \Vert_{\dot{H}^{-4}} \leq \frac{4C}{\lambda_{q+1}} \Vert R_q \Vert_{\dot{H}^{-4}}
\end{equation*}
and
\begin{equation*}
    \Vert a^2_{k_3}(R_q) \Vert_{\dot{H}^{-4}} \leq \frac{4C}{\lambda_{q+1}} \Vert R_q \Vert_{\dot{H}^{-4}}.
\end{equation*}
Thus for all $k \in \Omega$ one has
\begin{equation}\label{eq:pk2_est}
    \begin{split}
        \Vert p_{q+1,k,2} \Vert_{\dot{H}^{-4}} \leq \frac{\tilde{C}\lambda_{q+1}}{2} \left\Vert a^2_k(R_q) \right\Vert_{\dot{H}^{-4}} \leq 2C\tilde{C} \Vert R_q \Vert_{\dot{H}^{-4}} < 2\Vert R_q \Vert_{\dot{H}^{-4}}.
    \end{split}
\end{equation}
Note above we utilize the fact that $C\tilde{C} < 1$. It is easy to see that
$$
    \left(x_2 + \frac{1}{4}\right)^2 - 2\left(x_1 + \frac{1}{8}\right)^2 > 0 \quad \text{for} \quad |x| \leq \frac{1}{40}
$$
which implies $C^{-1} - \tilde{C} > 0$. ~\eqref{eq:pk2_est} follows from this observation.\\
The boundedness of $p_{q+1,k,1}$ follows almost exactly the same procedure as the one in Section \ref{section_oscillation_error}. We sketch the details focusing on the differences. Since
$$
    \left(\Lambda^{-1} \vartheta^{\pm}_{q+1,k}\right)^{\wedge}(\xi) = -\frac{|\xi|}{\sigma_{q+1}} \hat{K}_{\simeq 1}\left(\frac{\xi -(k\pm 2k^{\perp})\sigma_{q+1}}{\lambda_{q+1}} \right) \left(a_k(R_q) \rho^{k}_{q+1}\right)^{\wedge}(\xi - (k \pm 2k^{\perp})\sigma_{q+1})
$$
and
$$
    \left( \vartheta^{\pm}_{q+1,-k}\right)^{\wedge}(\eta) = \frac{2\pi|\eta|^2}{\sigma_{q+1}} \hat{K}_{\simeq 1}\left(\frac{\eta+(k\pm 2k^{\perp})\sigma_{q+1}}{\lambda_{q+1}} \right) \left(a_k(R_q) \rho^{k}_{q+1}\right)^{\wedge}(\eta + (k \pm 2k^{\perp}) \sigma_{q+1}).
$$
Each of the four sign combinations has the same structure as to what we have already seen in ~\eqref{eq:S^m_decomp}; when the signs are the same we expect the terms to be of low frequency and when they differ, they are of high frequency. So, employing precisely the same argument we used to show ~\eqref{eq:high_freq_ests}, we may also deduce that
\begin{equation}\label{eq:low_freq_grad_est}
    \Vert \Lambda^{-1} \vartheta^{+}_{q+1,k} \vartheta^{-}_{q+1,-k} \Vert_{\dot{H}^{-4}} + \Vert \Lambda^{-1} \vartheta^{-}_{q+1,k} \vartheta^{+}_{q+1,-k} \Vert_{\dot{H}^{-4}} < \Vert R_q \Vert_{\dot{H}^{-4}}.
\end{equation}
Now we proceed with the argument for the low frequency terms. As we saw before, the argument for the $+$ and $-$ cases is identical and can be handled simultaneously. For ease of notation, we simply treat the case where both signs are $+$. So we have
\begin{equation*}
    \begin{split}
        \Lambda^{-1} \vartheta^{+}_{q+1,k}\vartheta^{+}_{q+1,-k} &= \int_{\R^2} \int_{\R^2} M^+_{q+1,k} (\xi,\eta) \left(a_k(R_q) \rho^{k}_{q+1}\right)^{\wedge}(\xi)\\
        &\times \left(a_k(R_q) \rho^{k}_{q+1}\right)^{\wedge}(\eta) e^{2\pi i (\xi + \eta) \cdot x}\, d\xi\, d\eta
    \end{split}
\end{equation*}
where, after the change of variables
\[
    \xi \mapsto \xi + (k+2k^{\perp})\sigma_{q+1}, \quad \eta \mapsto \eta -(k+2k^{\perp})\sigma_{q+1},
\]
the symbol takes the form
$$
    M^+_{q+1,k} (\xi,\eta) = -2\pi\frac{|\xi + \sigma_{q+1}(k +2 k^{\perp})| | \eta - \sigma_{q+1}(k + 2k^{\perp})|^2}{\sigma_{q+1}^2} \hat{K}_{\simeq 1}\left(\frac{\xi}{\lambda_{q+1}}\right) \hat{K}_{\simeq 1}\left(\frac{\eta}{\lambda_{q+1}}\right).
$$
Defining the rescaled symbol
\begin{equation}\label{eq:M^*_2}
    M^{+*}_k(\xi,\eta) = -2\pi |\xi + (k + 2k^\perp)| | \eta - (k + 2k^\perp)|^2 \hat{K}_{\simeq 1}\left(\frac{5}{8} \xi\right) \hat{K}_{\simeq 1}\left(\frac{5}{8} \eta\right),
\end{equation}
we see that $M^{+*}_k$ is compactly supported and smooth. If we put
\begin{equation}\label{eq:kern_2}
    K^+_{q+1,k}(y,z) = \sigma_{q+1}^5 \left(M^{+*}_k\right)^{\vee}(\sigma_{q+1}y,\sigma_{q+1}z),
\end{equation}
then observe that $K_{q+1,k}$ satisfies the same bounds as ~\eqref{eq:kernel_est} and
$$
    \Lambda^{-1} \vartheta^+_{q+1,k}\vartheta^+_{q+1,-k} = \int_{\R^2} \int_{\R^2} K^+_{q+1,k}(y,z) a_k(x-y)\rho_{q+1}^k(x-y) a_k(x-z) \rho_{q+1}^k(x-z)\, dy\, dz.
$$
Notice, we have again suppressed the dependence of $a_k$ on the matrix $R_q$. Splitting $\rho_{q+1}^k(x-y)\rho_{q+1}^k(x-z)$ into its mean and off-mean component, we arrive at the decomposition
$$
    \Lambda^{-1} \vartheta^+_{q+1,k}\vartheta^+_{q+1,-k} = I_1 + I_2
$$
where
\begin{equation*}
    I_1 = \int_{\R^2} \int_{\R^2} K^+_{q+1,k}(y,z) \mathbb{P}_{\not =0,x}\left(\rho_{q+1}^k(x-y) \rho_{q+1}^k(x-z)\right) a_k(x-y) a_k(x-z)\, dy\, dz
\end{equation*}
and
\begin{equation*}
    I_2 = \int_{\R^2} \int_{\R^2} K^+_{q+1,k}(y,z) \mathbb{P}_{ =0,x}\left(\rho_{q+1}^k(x-y) \rho_{q+1}^k(x-z)\right) a_k(x-y) a_k(x-z)\, dy\, dz
\end{equation*}
where again the $x$ subscript indicates the mean is taken with respect to the $x$ variable. Following precisely the same argument for ~\eqref{eq:Q2_est}, we may show that
\begin{equation}\label{eq:I1_est}
    \Vert I_1 \Vert_{\dot{H}^{-4}} < \Vert R_q \Vert_{\dot{H}^{-4}}
\end{equation}
for $\lambda_{q+1}$ chosen large enough. For $I_2$, we again write
\begin{equation}\label{eq:I2_decomp}
    \begin{split}
        I_2 & = a_k^2(x) \int_{\R^2} \int_{\R^2} K^+_{q+1,k}(y,z) \mathbb{P}_{ =0,x}\left(\rho_{q+1}^k(x-y) \rho_{q+1}^k(x-z)\right)\, dy\, dz                                      \\
            & - a_k(x)\int_0^1 \int_{\R^2} \int_{\R^2} K^+_{q+1,k}(y,z) \mathbb{P}_{ =0,x}\left(\rho_{q+1}^k(x-y) \rho_{q+1}^k(x-z)\right) y\cdot \nabla a_k(x-ty) \, dy\, dz\, dt\ \\
            & - a_k(x)\int_0^1 \int_{\R^2} \int_{\R^2} K^+_{q+1,k}(y,z) \mathbb{P}_{ =0,x}\left(\rho_{q+1}^k(x-y) \rho_{q+1}^k(x-z)\right) z\cdot \nabla a_k(x-tz)\, dy\, dz\, dt   \\
            & + \int_{\R^2} \int_{\R^2} K^+_{q+1,k}(y,z) \mathbb{P}_{ =0,x}\left(\rho_{q+1}^k(x-y) \rho_{q+1}^k(x-z)\right)                                                         \\
            & \times \prod_{w \in \{y,z\}} \left(w \cdot \int_0^1 \nabla a_k(x-tw)\, dt\right) \, dy\, dz.
    \end{split}
\end{equation}
Using the same arguments used to bound the final three expressions in ~\eqref{eq:Q^1_decomp}, one can show the final three terms in ~\eqref{eq:I2_decomp} can be bounded in $\dot{H}^{-4}$ norm by $\Vert R_q \Vert_{\dot{H}^{-4}}$ for $\lambda_{q+1}$ chosen large enough. For the first term in ~\eqref{eq:I2_decomp}, which we refer to as $I_3,$ using ~\eqref{eq:mean_prod} and properties of the Fourier transform, one has
\begin{equation*}
    \begin{split}
        I_3 & = \sum_{n \in \Z^2} \lambda_{q+1}^{2(\epsilon-1)} \left|\hat{\phi}\left(\lambda_{q+1}^{\epsilon-1} n\right)\right|^2 a_k^2(x)\int_{\R^2} \int_{\R^2} K^+_{q+1,k}(y,z) e^{-2\pi i 5 \lambda_{q+1}^{\epsilon}(n_1 k + n_2 k^{\perp})\cdot (y-z)}\, dy\, dz \\
            & = \sum_{n \in \Z^2} \lambda_{q+1}^{2(\epsilon-1)} \left|\hat{\phi}\left(\lambda_{q+1}^{\epsilon-1} n\right)\right|^2 a_k^2(x) \hat{K}^+_{q+1,k}\left(5\lambda_{q+1}^\epsilon \mathcal{O}_kn,-5\lambda_{q+1}^\epsilon \mathcal{O}_kn\right).
    \end{split}
\end{equation*}
From ~\eqref{eq:M^*_2} and ~\eqref{eq:kern_2} we see that
$$
    \hat{K}_{q+1,k}\left(-\lambda_{q+1}^{\epsilon}5nk, \lambda_{q+1}^{\epsilon}5nk\right) = -2\pi\frac{\left|5\lambda_{q+1}^{\epsilon}\mathcal{O}_kn - \sigma_{q+1}(k+2k^\perp)\right|^3}{\sigma_{q+1}^2} \left|\hat{K}_{\simeq 1}\left(5\lambda_{q+1}^{\epsilon-1}\mathcal{O}_kn\right)\right|^2.
$$
And thus
\begin{equation*}
    \begin{split}
        I_3 &= a_k^2(x)\sum_{n \in \Z^2} -2\pi \lambda_{q+1}^{2(\epsilon-1)} \left|\hat{\phi}\left(\lambda_{q+1}^{\epsilon-1} n\right)\right|^2\\
        &\times \frac{\left|5\lambda_{q+1}^{\epsilon}\mathcal{O}_kn - \sigma_{q+1}(k+2k^\perp)\right|^3}{\sigma_{q+1}^2} \left|\hat{K}_{\simeq 1}\left(5\lambda_{q+1}^{\epsilon-1}\mathcal{O}_kn\right)\right|^2.
    \end{split}
\end{equation*}
Since $|5\lambda_{q+1}^{\epsilon-1}\mathcal{O}_kn| \leq 1/40$, using crude estimates we see
\begin{equation*}
    \begin{split}
        \frac{\left|5\lambda_{q+1}^{\epsilon}\mathcal{O}_kn - \sigma_{q+1}(k+2k^\perp)\right|^3}{\sigma_{q+1}^2} \left|\hat{K}_{\simeq 1}\left(5\lambda_{q+1}^{\epsilon-1}\mathcal{O}_kn\right)\right|^2 & \leq 320\lambda_{q+1}\left(\frac{1}{40} + \frac{\sqrt{5}}{8}\right)^3 \\
                                                                 & < 16\lambda_{q+1}.
    \end{split}
\end{equation*}
Using the same arguments as in ~\eqref{eq:Q11_final_est} and ~\eqref{eq:pk2_est} one can show that $\Vert I_3 \Vert_{\dot{H}^{-4}} < 128 \Vert R_q \Vert_{\dot{H}^{-4}}$. Hence
\begin{equation}\label{eq:I2_est}
    \|I_2\|_{\dot{H}^{-4}} < 256\Vert R_q \Vert_{\dot{H}^{-4}}
\end{equation}
for $\lambda_{q+1}$ chosen large enough. Combining ~\eqref{eq:low_freq_grad_est}, ~\eqref{eq:I1_est}, and ~\eqref{eq:I2_est} gives
\begin{equation}\label{eq:pk1_est}
    \Vert p_{q+1,k,1} \Vert_{\dot{H}^{-4}} < 1024 \Vert R_q \Vert_{\dot{H}^{-4}}.
\end{equation}
Now, from ~\eqref{eq:pk1_est} we get
\begin{equation*}
    \begin{split}
        \Vert \Lambda w_{q+1} \nabla^{\perp}\cdot w_{q+1} \Vert_{\dot{H}^{-5}} & = \left\Vert \sum_{k \in \Omega} \left(\nabla p_{q+1,k,1} + \operatorname{div}\left(S(\Lambda^{-1} \vartheta_{q+1,k}, R\vartheta_{q+1,-k})\right)\right)\right\Vert_{\dot{H}^{-5}}\\
        & \leq \sum_{k \in \Omega} \left\Vert p_{q+1,k,1} \right\Vert_{\dot{H}^{-4}} +  \left\Vert \sum_{k\in \Omega} S(\Lambda^{-1} \vartheta_{q+1,k}, R\vartheta_{q+1,-k})\right\Vert_{\dot{H}^{-4}}\\
           & < 8192\Vert R_q \Vert_{\dot{H}^{-4}} +  \left\Vert \sum_{k \in \Omega} \left(S(\Lambda^{-1} \vartheta_{q+1,k}, R\vartheta_{q+1,-k}) - p_{q+1,k,2}I\right) \right\Vert_{\dot{H}^{-4}}\\
        & + \sum_{k \in \Omega} \Vert p_{q+1,k,2}\Vert_{\dot{H}^{-4}}.
    \end{split}
\end{equation*}
Then, since $|\Omega| = 6 < 8$, from ~\eqref{eq:RO_est}, ~\eqref{eq:pk2_est} we have
\begin{equation}\label{eq:quant_bound}
\begin{split}   
    \Vert \Lambda w_{q+1} \nabla^{\perp}\cdot w_{q+1} \Vert_{\dot{H}^{-5}} &< 8192\Vert R_q \Vert_{\dot{H}^{-4}} +  2\left\Vert R_q \right\Vert_{\dot{H}^{-4}} +16\Vert R_q\Vert_{\dot{H}^{-4}}\\& = 8210 \Vert R_q \Vert_{\dot{H}^{-4}}\\& < 2^{-q+15}.
    \end{split}
\end{equation}
So using ~\eqref{eq:diag_exp}, ~\eqref{eq:quant_bound}, and our inductive hypothesis we have that
\begin{equation*}
    \sum_{n \leq q+1} \Vert \Lambda w_n \nabla^{\perp}\cdot w_n \Vert_{\dot{H}^{-5}} < C_2 - 2^{-q+100} + 2^{-q+15} < C_2 - 2^{-q+99}.
\end{equation*}
This proves \ref{i:7}.

\subsection{Proof of Item \ref{i:8}}\label{Sec:Regularity}

We will require the following variant of Bernstein's lemma.

\begin{lemma}\label{lem:Lp_Bernstein}
    Recall the projection operators $\mathbb{P}_{2^j}$ from Definition \ref{def:projs}. For $1 \leq p \leq \infty$ and $u$ smooth we have
    \begin{equation}\label{eq:Lp_Bernstein}
        \Vert \Lambda^s \mathbb{P}_{2^j}(u) \Vert_{L^p} \lesssim 2^{sj} \Vert u \Vert_{L^p}.
    \end{equation}
\end{lemma}
\begin{proof}
    Recall
    $$
        \mathbb{P}_{2^j}(u) = u \ast \left(\sum_{k \in \Z^2} \phi_j(k)e^{2\pi i k \cdot x}\right)
    $$
    Thus
    $$
        \left(\Lambda^s \mathbb{P}_{2^j}(u)\right)^{\wedge}(k) = \left(2\pi |k|\right)^s \phi_j(k) \hat{u}(k),
    $$
    and so
    \begin{equation}\label{eq:Bernstein_conv}
        \Lambda^s \mathbb{P}_{2^j}(u) = u \ast \left(\sum_{k \in \Z^2} (2\pi |k|)^s \phi_j(k)e^{2\pi i k \cdot x}\right).
    \end{equation}
    Utilizing Poisson summation one can show that
    $$
        \left\Vert\sum_{k \in \Z^2} (2\pi |k|)^s \phi_j(k)e^{2\pi i k \cdot x}\right\Vert_{L^1(\T^2)} \lesssim 2^{sj}.
    $$
    So applying this as well as Young's convolution inequality to ~\eqref{eq:Bernstein_conv} gives ~\eqref{eq:Lp_Bernstein}.
\end{proof}
\noindent
Now we proceed with the rest of the argument. Assume $-1/2 \leq \alpha < 1/2$ and fix $1 \leq p < 2$. From \ref{i:6} we have that $\mathbb{P}_{\lambda_{q+1}}(w_{q+1}) = w_{q+1}$. Hence applying this as well as Lemma \ref{lem:Bernstein} we obtain
\begin{equation*}
    \begin{split}
        \Vert \Lambda^{1/2}(v_{q+1} - v_q)\Vert_{L^p} & = \Vert \Lambda^{1/2}w_{q+1} \Vert_{L^p}                                                        \\
                                                      & \lesssim \lambda_{q+1}^{1/2} \Vert w_{q+1} \Vert_{L^p}                                          \\
                                                      & \lesssim \lambda_{q+1}^{1/2} \lambda_{q+1}^{-1/2} \lambda_{q+1}^{(1-\epsilon)(1 - \frac{2}{p})} \\
                                                      & = \lambda_{q+1}^{(1-\epsilon)(1 - \frac{2}{p})}.
    \end{split}
\end{equation*}
Since $\epsilon < 1$, we may choose $\lambda_{q+1}$ large enough to give
\begin{equation*}
    \Vert \Lambda^{1/2}(v_{q+1} - v_q)\Vert_{L^p} < 2^{-q-1}.
\end{equation*}
Similarly, fix $\alpha < -1/2$ and $\epsilon'$ and $p$ satisfying ~\eqref{eq:conds}. Then we have
\begin{equation*}
    \begin{split}
        \left\Vert \Lambda^{3/2-\epsilon'}(v_{q+1} - v_q)\right\Vert_{L^p} & = \left\Vert \Lambda^{3/2-\epsilon'} w_{q+1}\right\Vert_{L^p}\\
        &\lesssim \lambda_{q+1}^{3/2-\epsilon'} \lambda_{q+1}^{-1/2} \lambda_{q+1}^{(1-\epsilon)(1 - \frac{2}{p})} \\
        & < \lambda_{q+1}^{1-\epsilon' + (1-\epsilon)(1-\frac{2}{p})}.
    \end{split}
\end{equation*}
Using ~\eqref{eq:conds}, we see
$$
\left(1-\epsilon\right)\left(1-\frac{2}{p}\right) < \left(1-\epsilon'\right)\frac{p}{2-p}\left(1 - \frac{2}{p}\right) = \epsilon' - 1
$$
thus
$$
1-\epsilon' + (1 - \epsilon)\left(1 - \frac{2}{p}\right) < 0
$$
and so, we may choose $\lambda_{q+1}$ large enough to ensure
\begin{equation*}
    \Vert \Lambda^{3/2-\epsilon'}(v_{q+1} - v_q)\Vert_{L^p} < 2^{-q-1}.
\end{equation*}
This completes the proof.

\noindent\textsc{Department of Mathematics, Purdue University, West Lafayette, IN, USA.}
\vspace{.03in}
\newline\noindent\textit{Email address}: \href{mailto:ngismond@purdue.edu}{ngismond@purdue.edu}.\\
\noindent\textsc{"Simion Stoilow" Institute of Mathematics of the Romanian Academy, Calea Grivitei Street, no. 21, 010702 Bucharest, Romania.}
\newline\noindent\textit{Email address}: \href{mailto:sasharadu@icloud.com}{sasharadu@icloud.com}.

\end{document}